\documentclass{article}

%% Language and font encodings
\usepackage[english]{babel}
\usepackage[utf8x]{inputenc}
\usepackage[T1]{fontenc}
\usepackage{amsmath,amssymb,amsthm,mathtools,mathdots,nicefrac,tikz}

%% Sets page size and margins
\usepackage[a4paper,top=3cm,bottom=2cm,left=3cm,right=3cm,marginparwidth=1.75cm]{geometry}

%%Comment out lines
\usepackage{comment}

%% Useful packages
\usepackage{amsmath}
\usepackage{amsthm}
\usepackage{placeins}

\usepackage{amssymb}
\usepackage{graphicx}
\graphicspath{{figures/}}
\usepackage{caption}
\usepackage[colorlinks=true, allcolors=blue]{hyperref}
\usepackage{enumerate}
\usepackage{algorithm}
\usepackage[noend]{algpseudocode}

\makeatletter
\def\BState{\State\hskip-\ALG@thistlm}
\makeatother

\usetikzlibrary{math,graphs,cd}
\usepackage{xcolor}

%% TikZiT style files/Tikz/Sage output  
\usepackage{caption}
\usepackage{subcaption}

\theoremstyle{definition} 
\newtheorem{theorem}{Theorem}[section]

\newtheorem{corollary}[theorem]{Corollary}
\newtheorem{lemma}[theorem]{Lemma}
\newtheorem{proposition}[theorem]{Proposition}

\newtheorem{example}[theorem]{Example}

\newtheorem{question}[theorem]{Question}

\newcommand{\cT}{\mathcal{T}}
\newcommand{\cS}{\mathcal{S}}
\newcommand{\cX}{\mathcal{X}}

\DeclareMathOperator{\val}{val}
\DeclareMathOperator{\tw}{tw}
\DeclareMathOperator{\gon}{gon}

\DeclareMathOperator{\scw}{scw}
\DeclareMathOperator{\sn}{sn}

\DeclareMathOperator{\adh}{adh}

\DeclareMathOperator{\lw}{lw}

\DeclareMathOperator{\bw}{bw}

\DeclareMathOperator{\supp}{Supp}

\title{Scramble number and tree-cut decompositions}
\author{Lisa Cenek, Lizzie Ferguson, Eyobel Gebre, Cassandra Marcussen, Jason Meintjes,\\ Ralph Morrison, Liz Ostermeyer, Shefali Ramakrishna, and Ben Weber}
\date{}

\usepackage{graphicx}

\begin{document}

\maketitle

\begin{abstract}
    The scramble number of a graph is an invariant recently developed to study chip-firing games and divisorial gonality.  In this paper we introduce the screewidth of a graph, based on a variation of the existing literature on tree-cut decompositions.  We prove that this invariant serves as an upper bound on scramble number, though they are not always equal.  We study properties of screewidth, and present results and conjectures on its connection to divisorial gonality.
\end{abstract}

\section{Introduction}
The gonality of a graph is a combinatorial analog of gonality from divisor theory on algebraic curves. The \emph{(divisorial) gonality} of a graph $G$, denoted \(\gon(G)\), is the minimum degree of a rank $1$ divisor on $G$. 
Phrased in terms of chip-firing games, this is the minimum number of chips that can be placed on a graph so that they can be moved using chip-firing moves to eliminate one unit of debt, no matter where it is placed on the graph.
To upper-bound gonality by $k$, it suffices to provide a positive rank divisor on $G$ with degree $k$. Lower bounds, then, become the primary challenge in trying to compute gonality, as one must argue that every possible placement of fewer than $k$ chips on the graph fails to yield a winning strategy. 

Prior to 2020, the strongest known lower bound on gonality was the \emph{treewidth} of a graph, a well-studied graph invariant defined as the minimum width of a tree-decomposition. Seymour and Thomas \cite{st} proved a dual characterization of treewidth in terms of brambles, where a bramble is a collection of pairwise touching connected subgraphs; the treewidth of a graph is then one less than the maximum, over all brambles, of the minimum size of a hitting set for a bramble.  By \cite{debruyn2014treewidth}, we have \(\textrm{tw}(G)\leq \textrm{gon}(G)\).

In \cite{scramble}, the notion of a bramble was generalized to a \emph{scramble}, a collection of any connected subgraphs (not required to pairwise touch).  The order of a scramble is the minimum of two numbers:  the minimum size of a hitting set, and the minimum number of edges required to be deleted to put two of the scramble's subgraphs into separate components.  Letting the scramble number of a graph, denoted \(\textrm{sn}(G)\), be the maximum possible order of a scramble on that graph, we have \(\textrm{tw}(G)\leq \textrm{sn}(G)\leq \gon(G)\) \cite[Theorem 1.1]{scramble}.  Thus scramble number provides a strictly better lower bound on gonality than treewidth.

To compute the treewidth of a graph, one can give lower bounds by constructing brambles, and upper bounds by constructing tree decompositions.  Lower bounds on scramble number are given by specific scrambles, but there is no known tree decomposition structure to provide upper bounds on scramble number. This makes it difficult to compute scramble number, especially in the event that \(\textrm{sn}(G)<\gon(G)\).

In this paper, mirroring the max-min relationship between bramble number and treewidth, we present a tool to accompany scramble number: the \emph{screewidth} of a graph, denoted \(\scw(G)\). We prove the following  result.

\begin{theorem}\label{theorem:main}
For any graph G, we have \(\sn(G) \leq \scw(G)\).
\end{theorem}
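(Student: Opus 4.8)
The plan is to prove the stronger \emph{pointwise} inequality: for every scramble $\cS$ on $G$ and every tree-cut decomposition $(T,\cX)$ of $G$, the order $\lVert\cS\rVert$ of $\cS$ is at most the width of $(T,\cX)$. Since $\sn(G)$ is the maximum order over all scrambles and $\scw(G)$ is the minimum width over all decompositions, this at once gives $\sn(G)=\max_{\cS}\lVert\cS\rVert \le \scw(G)$. Recall that $\lVert\cS\rVert$ is the minimum of the hitting-set number $h(\cS)$ and the egg-cut number $e(\cS)$; it therefore suffices to exhibit, for each fixed $(T,\cX)$, either a hitting set or an egg-cut whose size is bounded by the width.

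The engine is an orientation argument in the spirit of Seymour and Thomas. First I would orient each edge $f=tt'$ of $T$ using the partition of $V(G)$ it induces: deleting $f$ splits $T$, hence splits $V(G)$ into the $t$-side $V_1$ and the $t'$-side $V_2$, with $\adh(f)$ edges of $G$ crossing between them. If some egg lies entirely inside $V_1$ while another lies entirely inside $V_2$, then removing the $\adh(f)$ crossing edges separates these two eggs, so $e(\cS)\le\adh(f)$, which is at most the width, and this case is finished. If instead every egg meets both sides, then each egg (being connected) contains a $V_1$-endpoint of some crossing edge; the at most $\adh(f)$ such endpoints form a hitting set, so $h(\cS)\le\adh(f)$ and again we are done. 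In the only remaining case exactly one side contains a complete egg, and I orient $f$ toward that side.

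Because $T$ is a finite tree, following out-edges terminates at a \emph{sink} node $t^*$, every incident edge of which points toward $t^*$. By construction this means that for each neighbor the pendant subtree $T_i$ hanging off $t^*$ contains no complete egg; equivalently, no egg is contained in the vertex set $U_i$ of any single $T_i$. Consequently every egg $S$ either meets the central bag $X_{t^*}$, or, avoiding $X_{t^*}$, is forced to connect across at least two of the regions $U_i$ and hence traverses a direct cross-edge counted by the adhesions at $t^*$. The goal is then to show that $X_{t^*}$, augmented by a bounded amount of adhesion data, simultaneously hits the eggs of the first kind and either hits or cuts the eggs of the second kind, with the total size of this certificate being exactly what the screewidth width-function records at $t^*$.

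The main obstacle is precisely this final accounting at the sink. In the classical bramble argument the elements pairwise touch, so no element can escape the central bag and a single bag suffices; here scramble eggs may ``wander'' through several pendant subtrees via direct cross-edges without ever meeting $X_{t^*}$, and these escaping eggs must be governed by the torso/adhesion part of the width rather than by the bag. The crux is to verify that the screewidth width at $t^*$ — which combines $|X_{t^*}|$ with the adhesion contributions of the incident tree-edges — dominates $\min\bigl(h(\cS),e(\cS)\bigr)$ for such eggs, by converting the cross-edges they traverse into either extra hitting vertices or a separating edge-cut. Once this local bound is secured, the pointwise inequality, and hence the theorem, follow.
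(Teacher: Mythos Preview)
Your approach is exactly the paper's: orient each link toward the side containing a complete egg (the two degenerate cases at a link already yield $e(\cS)\le|\adh(f)|$ or $h(\cS)\le|\adh(f)|$), locate a sink $t^*$, and finish there. The step you flag as the ``main obstacle'' is in fact immediate from what you have already established: since no egg lies entirely in any single $U_i$, every egg avoiding $X_{t^*}$ must contain an edge of $\adh(t^*)$, so $X_{t^*}$ together with one chosen endpoint of each edge in $\adh(t^*)$ is a hitting set of size at most $|X_{t^*}|+|\adh(t^*)|\le w(\cT)$ --- a hitting set alone suffices, and no egg-cut is needed at the sink. (One point of terminology: the quantity entering the width at $t^*$ is $|X_{t^*}|+|\adh(t^*)|$, where $\adh(t^*)$ is the \emph{node} adhesion --- the set of edges of $G$ with endpoints in two distinct components of $T-t^*$ --- not the sum of the incident link adhesions.)
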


Our paper is organized as follows.
In Section 2, we present definitions and background on general graph theory, on scramble number, and on screewidth. In Section 3 we prove several properties of the latter.  In Section 4 we prove Theorem \ref{theorem:main} and further study the relationship between scramble number and screewidth, including cases where \(\sn(G)<\scw(G)\).  In Section 5 we present results and possible directions for future work relating screewidth and gonality.

\medskip

\noindent \textbf{Acknowledgements.} The authors were supported by Williams College and the SMALL REU, and by the NSF via grants DMS-1659037 and DMS-2011743.  They thank Ben Baily for helpful discussions on screewidth.

\section{Background and definitions}

Throughout this paper, a graph $G = (V, E)$ is a connected, finite multigraph with vertex set \(V\) and edge set \(E\).  We allow multiple edges connecting a pair of vertices, but no loops connecting a vertex to itself.  If no two vertices are joined by multiple edges, we call \(G\) \emph{simple}.   The \emph{edge-connectivity} of a graph, denoted \(\lambda(G)\), is the smallest number of edges that can be deleted to disconnect \(G\).  Given two disjoint subsets \(A,B\subseteq V\), we let \(E(A,B)\subseteq E\) denote the set of edges with one endpoint in \(A\) and one endpoint in \(B\).  The \emph{valence} of a vertex \(v\) is the number of edges incident to \(v\); that is, \(\val(v)=\left|E(\{v\},\{v^C\}\right|\). A \(1\)-valent vertex is called a \emph{leaf}.

We say a graph \(H\) is a \emph{subgraph} of a graph \(G\) if it can be obtained by deleting edges and vertices from \(G\). Given a subset \(S\subset V(G)\), we let \(G[S]\) denote the subgraph induced by \(S\), with vertex set \(S\) and edge set equal to the subset of \(E\) consisting of edges with both endpoints in \(S\). We say that \(H\) is a \emph{subdivision} of \(G\) if it can be obtained by iteratively introducing \(2\)-valent vertices in the middles of edges.  We say that \(H\) is a \emph{refinement} of \(G\) if it can be obtained by ``smoothing'' \(2\)-valent vertices; more precisely, if \(v\) is a \(2\)-valent vertex with distinct neighbors \(u\) and \(w\), smoothing at \(v\) deletes \(v\) from the graph and adds an edge connecting \(u\) and \(w\).

We briefly recall several families of graphs that will be useful in our arguments and examples.  A \emph{tree} is a connected graph with no cycles. Three families of trees we will refer to are the \emph{path graphs} \(P_n\), consisting of \(n\) vertices connected in a path; the \emph{star graphs} \(K_{1,n}\), consisting of one vertex connected to \(n\) others, with no edges between the remaining \(n\) vertices; and the \emph{cubic caterpillar graphs} \(CCG_n\) for \(n\) even, consisting of a path of length \((n/2)+1\) with a leaf added to each internal vertex.  Examples of these graphs are illustrated in Figure \ref{figure:tree_examples}. The \emph{cycle graph on \(n\) vertices}, denoted \(C_n\), is the graph with \(n\) vertices arranged in a cycle, each vertex connected to its two neighbors.  The \emph{complete graph on \(n\) vertices}, denoted \(K_n\), is the simple graph with \(n\) vertices in which each vertex is connected to all others.

\begin{figure}[hbt]
    \centering
    \includegraphics{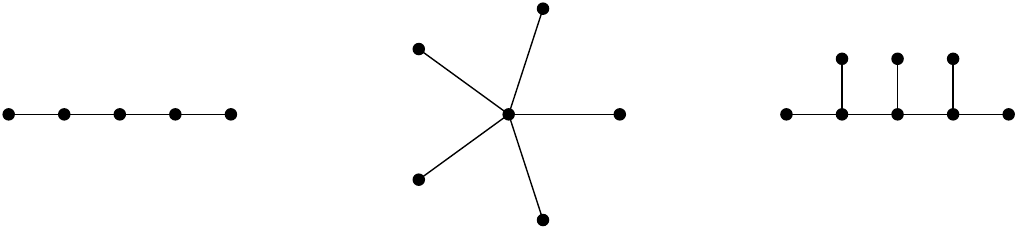}
    \caption{The path graph \(P_5\), the star graph \(K_{1,5}\), and the cubic caterpillar graph \(CCG_8\).}
    \label{figure:tree_examples}
\end{figure}

We close our recollection of graph theory with the definition of two types of graph products: Cartesian and rooted. Given graphs \(G\) and \(H\), their \emph{Cartesian product} \(G\square H\) is the graph with vertex set \(V(G)\times V(H)\) and an edge between \((u,v)\) and \((u',v')\) if and only if \(u=u'\) and \(vv'\in E(H)\), or \(v=v'\) and \(uu'\in E(H)\) (in both cases the number of edges included between two vertices matches the number coming from \(G\) or \(H\)).  A few famous examples of product graphs are the grid graphs \(G_{m,n}=P_m\square P_n\), the stacked prism graphs \(Y_{m,n}=C_m\square P_n\),  the toroidal grid graphs \(T_{m,n}=C_m\square C_n\), and the \(n\)-dimensional hypercube graph \(Q_n\), which is the Cartesian product of \(n\) compies of \(K_2\).

 Given two graphs \(G\) and \(H\), with a vertex \(v\in V(H)\), the \emph{rooted product of \(G\) with \(H\) at \(v\)}, denoted \(G\circ(H,v)\), consists of a copy of \(G\) with \(|V(G)|\) copies of \(H\), each glued at the vertex \(v\) to a different vertex of \(G\). If \(H\) is vertex-transitive, meaning every vertex is indistinguishable, we may omit the choice of \(v\) and write \(G\circ H\).

 We now recall definitions from \cite{scramble}, which introduced the definitions of scramble and scramble number.  A \emph{scramble} $\cS$ on a graph $G$ is a collection \(\{E_1,\ldots,E_\ell\}\) of subsets \(E_i\subset V(G)\), referred to as \emph{eggs}, such that \(G[E_i]\) is connected for all \(i\).  A \emph{hitting set} for $\cS$ is a subset $H \subseteq V(G)$ such that $H \cap E_i \neq \emptyset$ for all $E_i \in \cS$. The minimum size of a hitting set for $\cS$ is denoted $h(\cS)$. An \emph{egg-cut} for \(\cS\) is a set \(T\subset E(G)\) such that the graph \(G-T\) has at least two distinct connected components containing an egg.  The minimum size of an egg-cut for \(\cS\) is denoted \(e(\cS)\).  We remark that in a minimum egg-cut, the graph is split into exactly two components; otherwise there are edges that could have been excluded from \(T\) while still disconnecting two eggs.  We can thus write such egg-cuts as \((A,A^C)\), where \(A\) and \(A^C\) are the vertices of the connected components of \(G-T\), and \(T=E(A,A^C)\) is the collection of edges conencting \(A\) and \(A^C\).  We define the \emph{order of a scramble} to be the minimum of $h(\cS)$ and \(e(\cS)\):
 \[||\mathcal{S}||=\min\{h(\cS),e(\cS)\}.\]
Finally, the \emph{scramble number} of a graph \(G\) is the maximum order of any scramble on that graph:
\[\sn(G)=\max_{\cS\textrm{ on }G}||\mathcal{S}||.\]

\begin{figure}[hbt]
    \centering
    \includegraphics{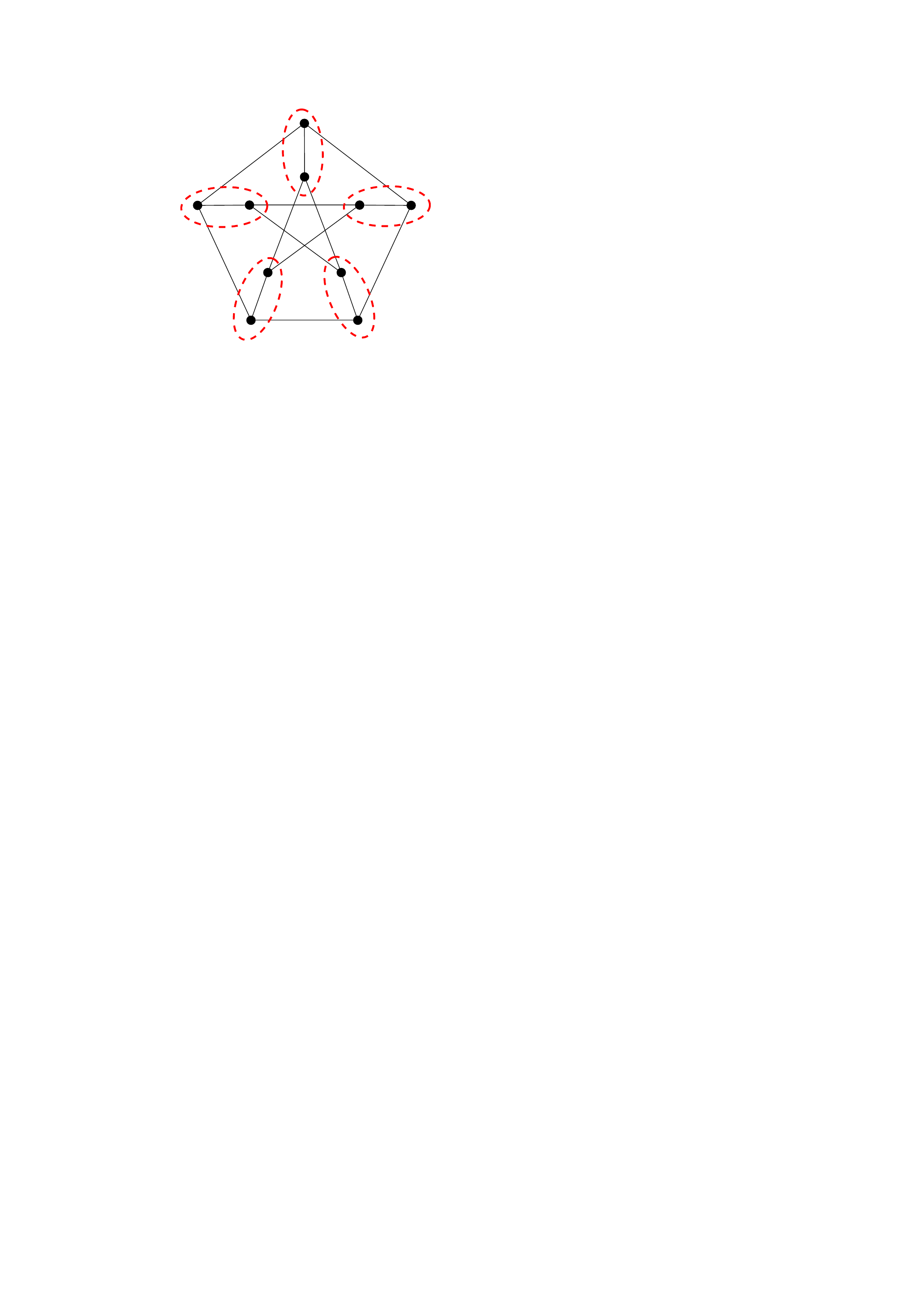}
    \caption{The Petersen graph with a scramble on it.}
    \label{figure:petersen_scramble}
\end{figure}

For an example of a scramble, let \(G\) be the Petersen graph illustrated in Figure \ref{figure:petersen_scramble}, with five collections of vertices circled; these  form the five eggs of a scramble \(\mathcal{S}\).  Since the five eggs happen to be disjoint, a minimal hitting set will consist of one vertex in each egg, so \(h(\mathcal{S})=5\).  Note that between any two eggs \(E_i\) and \(E_j\), there are four edge-disjoint paths connecting them:  two along the outer cycle, and two along the inner cycle.  To separate \(E_i\) and \(E_j\) into distinct components, at least one edge from each path must be deleted.  It follows that \(e(\cS)\geq 4\), and in fact \(e(\cS)= 4\); for instance, deleting the four edges incident to one of the eggs yields an egg-cut.  Thus \(||\cS||=\min(5,4)=4\), and we know that \(\sn(G)\geq 4\).  Proving \(\sn(G)= 4\) would take a lot more work, a priori:  one would need to argue that no scramble on the Petersen graph has order \(5\) or more.

We now set up the framework necessary to define screewidth. Let \(G\) be a graph and let $\cT = (T, \mathcal{X})$, where $T$ is a tree and for every vertex $b \in \ V(T)$, there exists a set $X_b \in \mathcal{X}$, referred to as a \emph{bag}, whose elements are vertices in $V(G)$; note that \(X_b\) may be the empty set. If
        
        \begin{itemize}
            \item $\displaystyle \bigcup_{b \in V(T)}X_b = V(G)$, and
            \item for any distinct $b, d \in V(T), X_b \cap X_d = \varnothing$,
        \end{itemize}
 then we call $\cT$ a \emph{tree-cut decomposition} of $G$.  To distinguish between the edges and vertices of $T$ and of $G$, we will call an element of $V(T)$ a \emph{node} and an element of $E(T)$ a \emph{link};  we will continue to refer to elements of $V(G)$ and $E(G)$ as vertices and edges, respectively.

Tree-cut decompositions have been previously studied in \cite{wollan,dualobjects}.  However, our definition of width will differ slightly from what appears in earlier literature.  Given a tree-cut decomposition $\cT = (T, \mathcal{X})$ of $G$, deleting any link $l \in E(T)$ will disconnect $T$ since \(T\) is a tree. The \emph{(link) adhesion} of $l$, denoted $\adh(l)$, is the set of all edges $e \in E(G)$ with endpoints in bags \(X_b\) and \(X_d\) such that the nodes \(b\) and \(d\) are in different components of \(T-l\). Similarly, deleting any non-leaf node $b \in V(T)$ will disconnect $T$. The \emph{(node) adhesion} of $b$, denoted $\adh(b)$, is the set of all edges \(e\in E(G)\) whose endpoints are in bags \(X_c\) and \(X_d\) with \(c\) and \(d\) in different components of $T-b$.

The \emph{link width} of $\cT=(T, \mathcal{X})$, denoted $\lw(\cT)$, is \[\lw(\cT)=\max_{l \in E(T)}\{|\adh(l)|\}.\]  The \emph{bag width} of  $\cT=(T, \mathcal{X})$, denoted $\bw(\cT)$, is \[\bw(\cT)=\max_{b \in V(T)}\{|X_b| + |\adh(b)|\}.\]  The \emph{width} of a tree-cut decomposition, denoted $w(\cT)$, is $\max\{\lw(\cT), \bw(\cT)\}$.  Finally, the \emph{screewidth}\footnote{The etymology of screewidth is two-fold. First, it comes from the word \emph{scree}, meaning a loose collection of rocks, of which the disjoint bags \(X_b\) are reminiscent.  Second, it is a portmanteau of \emph{scramble} and \emph{treewidth}.} of a graph $G$, denoted $\mathrm{scw}(G)$, is the minimum possible width of a tree-cut decomposition on \(T\). If \(w(\mathcal{T}=\scw(G)\), we call \(\mathcal{T}\) \emph{optimal}.

\begin{figure}[hbt]
    \centering
    \includegraphics[scale=0.7]{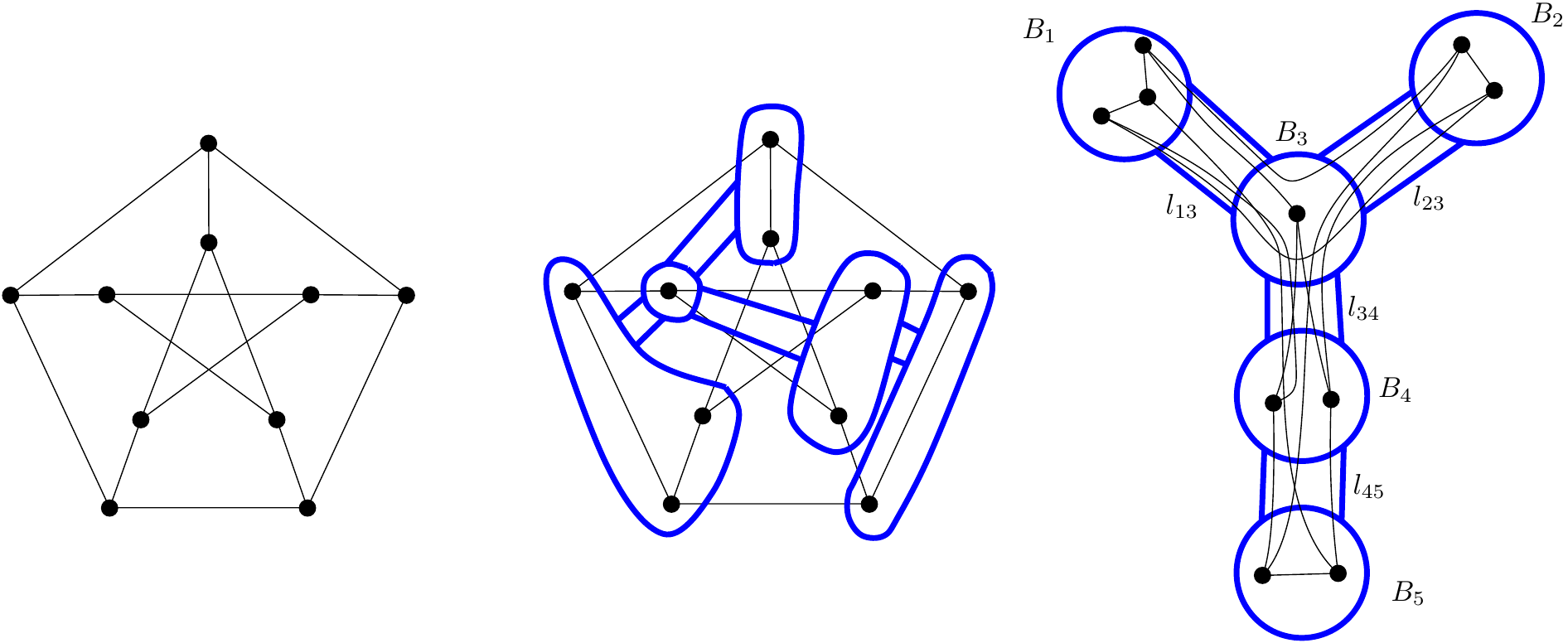}
    \caption{The Petersen graph, with two illustrations of the same tree-cut decomposition of width~\(7\).}
    \label{fig:petersen_example}
\end{figure}

As an example, consider the Petersen graph \(G\) illustrated on the left in Figure \ref{fig:petersen_example}.  In the middle is an example of tree-cut decomposition \(\mathcal{T}=(T,\mathcal{X})\).  We remark that bags corresponding to adjacent nodes of \(T\) need not have any vertices connected with one another in \(G\); nor do the vertices in a bag need to induce a connected subgraph of \(G\).

On the right in Figure \ref{fig:petersen_example} is a more illustrative way of drawing the same tree-cut decomposition, where the tree is drawn first, with all vertices of \(G\) in the nodes corresponding to their respective bags; and then all edges connecting vertices in distinct bags \(X_b\) and \(X_d\) are drawn to traverse the unique path from \(b\) to \(d\) in \(T\). Here we blur the distinction between a node \(b_i\in T\) and the corresponding bag \(B_i:=B_{b_i}\) in \(\mathcal{X}\).  The adhesion of a link can be quickly seen as the number of edges passing through that link, so \(|\adh(l_{13})|=5\), \(|\adh(l_{23})|=|\adh(l_{45})=4\)|, and \(|\adh(l_{34})|=6\); so, \(\lw(\mathcal{T})=6\).  The adhesion of a node can similarly be seen as the number of edges passing through the corresponding bag with neither endpoint in that bag, so \(\adh(b_3)=6\) and \(\adh(b_4)=2\); all other nodes correspond to leaves, and thus have adhesion equal to \(0\). Adding each node adhesion to the number of vertices in the corresponding bag, we find that \(\bw(T)=|B_3|+|\adh(B_3|)=1+6=7\).  Thus, \(w(\mathcal{T})=\max\{\lw(\mathcal{T}),\bw(\mathcal{T})\}=\max\{6,7\}=7\).

\begin{figure}
    \centering
    \includegraphics{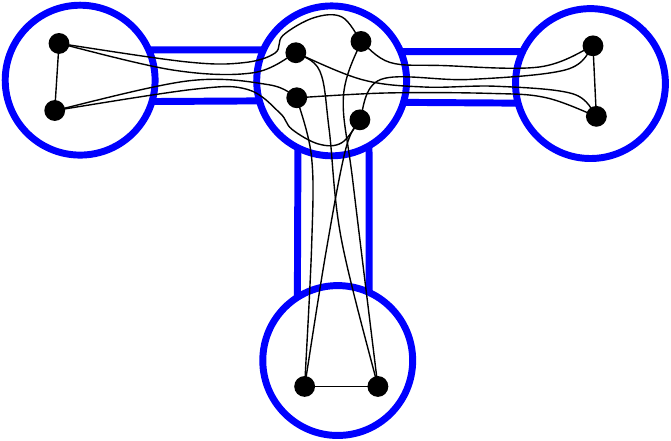}
    \caption{A tree-cut decomposition of the Petersen graph with width \(4\).}
    \label{fig:petersen_decomposition_optimal}
\end{figure}

This turns out not to be an optimal tree-cut decomposition of the Petersen graph.  As shown in Figure \ref{fig:petersen_decomposition_optimal}, the Petersen graph has a tree-cut decomposition of width \(4\), so we have \(\scw(G)\leq 4\).  Since we already saw that \(\sn(G)\geq 4\), our Theorem \ref{theorem:main} gives us \(4\leq \sn(G)\leq \scw(G)\leq 4\), so \(\sn(G)=\scw(G)=4\).

We close this section by briefly recalling the mechanics of chip-firing and the definition of gonality.  We only recall what is necessary for the content of Section \ref{section:gonality}; see \cite{sandpiles} for more details.

Given a graph \(G\), a \emph{divisor} on \(G\) is an assignment of integers to the vertices of \(G\).  These are intuitively thought of as poker chips placed on the vertices of the graph (with some poker chips representing debt, allowing for negative integers). If every integer in a divisor \(D\) is nonnegative, we say \(D\) is \emph{effective}, denoted \(D\geq 0\).  The sum of the integers assigned to the vertices by \(D\) is called the \emph{degree} of \(D\), denoted \(\deg(D)\).

For a vertex \(v\in V(G)\), the \emph{chip-firing move at \(v\)} transforms a divisor \(D\) into a new divisor \(D'\) as follows.  A total of \(\textrm{val}(v)\) chips are removed from \(v\) and redistributed to its neighbors, one along each edge incident to \(v\).  Note that the total number of chips is preserved, so \(\deg(D)=\deg(D')\).  If two divisors \(D\) and \(D'\) differ by a sequence of chip-firing moves, we say that \(D\) and \(D'\) are \emph{equivalent}, written \(D\sim D'\).

In addition to  firing vertices individually, one can perform a \emph{subset-firing move}, wherein every vertex in a set \(S\subset V(G)\) is fired simultaneously (or equivalently, in any order).  Consider two equivalent effective divisors \(D\) and \(D'\) and a sequence of chip-firing moves transforming \(D\) into \(D'\); since firing every vertex once has no effect on a divisor, we may assume not all vertices are fired in this sequence.  This sequence of firing-moves then admits a \emph{level-set decomposition}
\[\emptyset \subsetneq U_1\subseteq U_2\subseteq\ldots \subseteq U_k\subsetneq V(G),\]
where subset-firing \(U_1,U_2,\ldots,U_k\) transforms \(D\) into \(D'\), such that the resulting sequence of divisors
\[D=D_0,D_1,D_2,\ldots,D_{k-1},D_k=D'\] are all effective. See \cite{db} for more details on level-set decompositions.

We say a divisor \(D\) has \emph{positive rank} if for every \(v\in V(G)\), \(D\) is equivalent to an effective divisor \(D'\) with at least one chip on \(v\).  The \emph{(divisorial) gonality of a graph}, denoted \(\gon(G)\), is the minimum degree of a divisor of positive rank. Intuitively, this is the smallest number of chips that can be moved around by subset-firing moves to cover any vertex in the graph without ever introducing debt.

\section{Properties of screewidth}\label{section:properties_of_screewidth}

Throughout this section, we develop results on screewidth, saving its connections to scramble number for Section \ref{section:sn_and_scw}.  We will occasionally forward reference a result from that section; however, the later proof will not rely on any results from this section.

We note that every graph has a \emph{trivial} tree-cut decomposition consisting of a tree on one vertex, and every vertex of the underlying graph in the same bag. The width of the trivial tree-cut decomposition is always equal to $|V(G)|$.

\begin{proposition}\label{prop:scw_n-1}
For a graph $G$ on \(n\) vertices we have $\scw(G) \leq n$, with equality if and only if \(\lambda(G)\geq n\). In particular, if $G$ is simple, then $\scw(G) \leq n - 1$.
\end{proposition}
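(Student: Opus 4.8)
The plan is to handle the upper bound, the equality characterization, and the simple-graph corollary in turn, with the edge-connectivity \(\lambda(G)\) doing the real work. The inequality \(\scw(G) \leq n\) is immediate: the trivial tree-cut decomposition noted above has width \(|V(G)| = n\), and \(\scw(G)\) is by definition the minimum width over all decompositions. The simple-graph statement will drop out of the equality characterization at the end, so the heart of the argument is proving \(\scw(G) = n \iff \lambda(G) \geq n\).

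For the direction \(\lambda(G) \geq n \implies \scw(G) = n\), I would show that an arbitrary tree-cut decomposition \(\cT = (T, \mathcal{X})\) has width at least \(n\). If one bag contains all of \(V(G)\), then \(|X_b| = n\) already forces \(\bw(\cT) \geq n\). Otherwise at least two bags are nonempty, and I would fix nodes \(b, d\) carrying nonempty bags and any link \(l\) on the path from \(b\) to \(d\) in \(T\). Deleting \(l\) partitions \(V(G)\) into \(A \supseteq X_b\) and \(A^C \supseteq X_d\), both nonempty, and \(\adh(l) = E(A, A^C)\) is then a nonempty edge cut of the connected graph \(G\). Hence \(|\adh(l)| \geq \lambda(G) \geq n\), so \(\lw(\cT) \geq n\). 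I expect this to be the main obstacle, precisely because empty bags are allowed: taking the path between two \emph{nonempty} bags is what guarantees both sides of the resulting cut are nonempty, so I would be careful to justify that choice rather than cutting at an arbitrary link (whose adhesion could be empty).

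For the converse I would argue the contrapositive. If \(\lambda(G) \leq n - 1\), choose a minimum edge cut \(E(A, A^C)\) with \(A, A^C\) nonempty and \(|E(A,A^C)| = \lambda(G)\), and build the two-node decomposition whose bags are \(A\) and \(A^C\), joined by a single link. That link has adhesion \(E(A, A^C)\) of size at most \(n - 1\), and since both nodes are leaves (node adhesion \(0\)), the bag width is \(\max\{|A|, |A^C|\} \leq n - 1\); thus this decomposition has width at most \(n - 1\), giving \(\scw(G) < n\). Finally, for the simple case it suffices to observe that \(\lambda(G) \leq \delta(G) \leq n - 1\), since in a simple graph every vertex has at most \(n - 1\) neighbors; the equality characterization then yields \(\scw(G) \neq n\), i.e.\ \(\scw(G) \leq n - 1\).
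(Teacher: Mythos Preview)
Your proof is correct and follows essentially the same approach as the paper's: the trivial decomposition for the upper bound, a link-adhesion/edge-cut argument for \(\lambda(G)\geq n\Rightarrow\scw(G)=n\), a two-bag decomposition along a minimum edge cut for the converse, and \(\lambda(G)\leq\delta(G)\leq n-1\) for the simple case. In fact you are slightly more careful than the paper on one point: the paper simply asserts that when \(T\) has more than one node some link's adhesion is an edge-cut of \(G\), while you correctly anticipate the empty-bag issue and choose a link on the path between two \emph{nonempty} bags to ensure both sides of the induced partition are nonempty.
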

\begin{proof}
The trivial tree-cut decomposition proves $\scw(G) \leq |V(G)|$.  Now assume \(\lambda(G)\geq |V(G)|\), and that \(\mathcal{T}=(T,\mathcal{X})\) is a tree-cut decomposition of \(G\).  Either \(T\) has a single bag, in which case \(w(\mathcal{T})=\bw(\mathcal{T})=n\); or \(T\) has at least one link \(l\), whose adhesion forms an edge-cut for \(G\).  It follows that \(w(\mathcal{T})\geq \adh(l)\geq\lambda(G)\geq n\).  Now, suppose \(G\) satisfies \(\lambda(G)<|V(G)|\).  By deleting \(\lambda(G)\) edges, we can split \(G\) into two connected components, say with vertex sets \(A\) and \(B\).  Consider the tree-cut decomposition \(\mathcal{T}\) with \(T=K_2\) and \(\mathcal{X}=\{A,B\}\).  The width of this decomposition is the maximum of \(\lambda(G)\), \(|A|\), and \(|B|\), all of which are strictly smaller than \(n\).  Thus \(\scw(G)\leq n-1\).

 If $G$ is simple, then \(\lambda(G)\leq n-1\), giving the final claim.
\end{proof}

For simple graphs, we can achieve an even better upper bound on screewidth using the independence number of a graph. Recall that a set \(S\subseteq V(G)\) is called  \emph{independent} if no two vertices in \(S\) share an edge; and the \emph{independence number} \(\alpha(G)\) of a graph is the maximum size of an independent set on \(G\).

\begin{lemma}\label{lemma: scw_n-alpha}
For a simple graph $G$ on $n$ vertices, we have $\scw(G) \leq n - \alpha(G)$.
\end{lemma}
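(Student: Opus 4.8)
The plan is to build a tree-cut decomposition directly from a maximum independent set, placing the independent vertices in their own singleton bags surrounding a central bag holding everything else. Let me think about the adhesions carefully.

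Let $S$ be a maximum independent set, so $|S| = \alpha(G)$, and let me write $S = \{v_1, \ldots, v_\alpha\}$. Let $R = V(G) \setminus S$ be the remaining vertices, with $|R| = n - \alpha$. I would construct a star-shaped tree $T$ with one central node $c$ and one leaf node $b_i$ for each $v_i \in S$. The central bag is $X_c = R$, and each leaf bag is the singleton $X_{b_i} = \{v_i\}$. This is a valid tree-cut decomposition: the bags are disjoint and cover $V(G)$.

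Now I compute the width. First I would bound the bag width. For each leaf node $b_i$, since leaves have no node adhesion (deleting a leaf does not disconnect $T$), we get $|X_{b_i}| + |\adh(b_i)| = 1 + 0 = 1$. For the central node $c$, we have $|X_c| = n - \alpha$, and I must check $\adh(c)$. Deleting $c$ splits $T$ into the $\alpha$ isolated leaves, so $\adh(c)$ counts edges of $G$ running between two distinct leaf bags $\{v_i\}$ and $\{v_j\}$ — but since $S$ is independent, there are no such edges, so $\adh(c) = 0$. Hence $\bw(\cT) = \max\{n - \alpha, 1\} = n - \alpha$ (assuming $n \geq \alpha + 1$, which holds unless $G$ has no edges). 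Next I bound the link width: for the link $l_i$ joining $b_i$ to $c$, the adhesion $\adh(l_i)$ is exactly the set of edges of $G$ incident to $v_i$ (each such edge goes to some vertex in $R$, since $S$ is independent), so $|\adh(l_i)| = \val(v_i)$.

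The subtle point — the part I expect to be the main obstacle — is that the link width $\max_i \val(v_i)$ could in principle exceed $n - \alpha$, which would ruin the bound. So the key step is to argue this cannot happen: for any $v_i \in S$, all of its neighbors lie in $R$ (by independence), so $\val(v_i) \leq |R| = n - \alpha$; since $G$ is simple, $v_i$ has at most one edge to each vertex of $R$, giving $\val(v_i) \le n - \alpha$ directly. This is exactly where simplicity is essential, and it confirms $\lw(\cT) \le n - \alpha$ as well. Combining, $w(\cT) = \max\{\lw(\cT), \bw(\cT)\} \le n - \alpha$, so $\scw(G) \le n - \alpha$. I would also handle the degenerate cases separately: if $G$ has a single vertex or more generally if the construction yields an empty central bag (when $\alpha = n$, i.e.\ $G$ edgeless, impossible here since $G$ is connected with $n \ge 2$), the bound holds trivially. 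The remaining care is just verifying that using a maximum $S$ rather than any independent set gives the claimed bound with $\alpha(G)$, which is immediate since a larger $|S|$ makes $n - |S|$ smaller.
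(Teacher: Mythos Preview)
Your proposal is correct and follows essentially the same approach as the paper: both construct the star $K_{1,\alpha(G)}$ with the maximum independent set in singleton leaf bags and the remaining $n-\alpha(G)$ vertices in the central bag, then bound link adhesions by noting that each $v_i\in S$ has all its neighbors in $R$ and (using simplicity) therefore valence at most $n-\alpha(G)$, while the central node has empty adhesion since $S$ is independent. Your write-up is slightly more explicit about where simplicity enters and about the degenerate cases, but the argument is the same.
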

\begin{proof}
Let $S \subseteq V(G)$ be an independent set such that $|S| = \alpha(G)$. Construct a tree-cut decomposition as follows. Let $T = K_{1,\alpha(G)}$. Let the bag corresponding to the central node of the star be the set $V(G) - S$, and let each vertex of $S$ be in its own bag corresponding to a leaf node of $T$. Then the adhesion of any link in $\cT$ is at most $n-\alpha(G)$, since the degree of any vertex in $S$ is at most $n-\alpha(G)$. Furthermore, for \(b\) the central node we have \(\adh(b)=0\) since \(S\) is independent, yielding $|X_b|+|\adh({b})|=|V(G)-S|=n-\alpha(G)$; and for \(b\) a leaf node we have \(|X_b|+|\adh(b)|=1+0=1\). Thus, the width of this tree-cut decomposition equal to $n-\alpha(G)$. We conclude that $\scw(G) \leq n - \alpha(G)$.
\end{proof}

The following lemma is useful in considering node adhesions.

\begin{lemma}
Let $\mathcal{T} = (T, \mathcal{X})$ be a tree-cut decomposition on $G$. Suppose $vw \in E(G)$ with $v$ and $w$ in bags $X_{c}, X_{d}$ respectively, and assume $c$ and $d$ are nonadjacent in $T$. Then the edge $vw \in \adh{(b)}$ for every $b \in V(T)$ with $b$ on the path in $T$ from $c$ to $d$. 
\end{lemma}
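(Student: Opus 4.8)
The plan is to unpack the definition of $\adh(b)$ directly. Recall that $\adh(b)$ is the set of edges of $G$ whose endpoints lie in bags $X_c, X_d$ with $c$ and $d$ in different components of $T - b$. So to show $vw \in \adh(b)$, I must verify that deleting $b$ from $T$ separates $c$ from $d$, where $c$ and $d$ are the nodes whose bags contain $v$ and $w$.

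First I would use the hypothesis that $b$ lies on the path from $c$ to $d$ in $T$. Since $T$ is a tree, there is a unique path from $c$ to $d$, and $b$ is an interior vertex of this path (I should note that $b \neq c$ and $b \neq d$: since $c$ and $d$ are nonadjacent, the path from $c$ to $d$ has length at least $2$, so any node strictly between them, as well as the endpoints, could be considered — but the cleanest reading is that $b$ is any node on the path, and the interesting case is when $b$ is strictly interior). I would then argue that removing $b$ disconnects the path, placing $c$ in one component of $T - b$ and $d$ in another. This is the standard fact that a cut vertex on the unique path between two nodes of a tree separates them: any path from $c$ to $d$ in $T - b$ would give a second $c$--$d$ path in $T$, contradicting that trees have unique paths.

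Once $c$ and $d$ lie in distinct components of $T - b$, the edge $vw$ (with $v \in X_c$, $w \in X_d$) satisfies exactly the defining condition for membership in $\adh(b)$, completing the argument. I should handle the boundary cases carefully: if $b = c$, then I would observe that $b$ is a non-leaf node (it lies on a path of length $\geq 2$), deleting it separates $d$ from the rest, and $v$'s bag $X_c = X_b$ is removed — here I need the convention for how endpoints in $X_b$ itself are treated, so the statement is really about $b$ strictly between $c$ and $d$, or I interpret the path as its interior. The honest reading, given the phrase ``on the path,'' is that the relevant and nontrivial content is for interior $b$, and I would state that explicitly.

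I do not anticipate a genuine obstacle here; the only subtlety is bookkeeping around whether $b$ may equal $c$ or $d$ and precisely matching the definition of node adhesion (which counts edges whose two endpoints sit in bags on opposite sides of $T - b$). The core is the elementary tree fact that an interior node of the unique $c$--$d$ path is a $c$--$d$ separator, so the main step is simply invoking uniqueness of paths in a tree and then reading off the definition.
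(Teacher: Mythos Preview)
Your approach is correct and essentially identical to the paper's: the paper's proof is a three-sentence argument that removing $b$ leaves $c$ and $d$ in different components of $T-b$, so $vw$ lies in $\adh(b)$ by definition. Your extra care about the boundary cases $b=c$ or $b=d$ is reasonable (and the paper simply glosses over it), but the core argument is the same.
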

\begin{proof}
Assume $b$ lies on the path in $T$ from $c$ to $d$. Thus, removing $b$ from $T$ leaves $c$ and $d$ in different components. So, the edge $vw \in E(G)$ is in the node adhesion of $b$.
\end{proof}

The following several results show that certain unexpected structures can be required for optimal tree-cut decompositions. In the next proposition we show that there exist graphs $G$ such that any optimal tree-cut decomposition $\cT=(T,\cX)$ has at least one empty bag.  Consider $K_3 \circ K_n^l$ where $K_n^l$ denotes a complete graph on $n$ vertices where the vertices are pairwise connected by $l$ parallel edges where $n < l$; see Figure \ref{fig:empty bags needed}.

\begin{figure}[hbt]
    \centering
    \includegraphics[scale=.5]{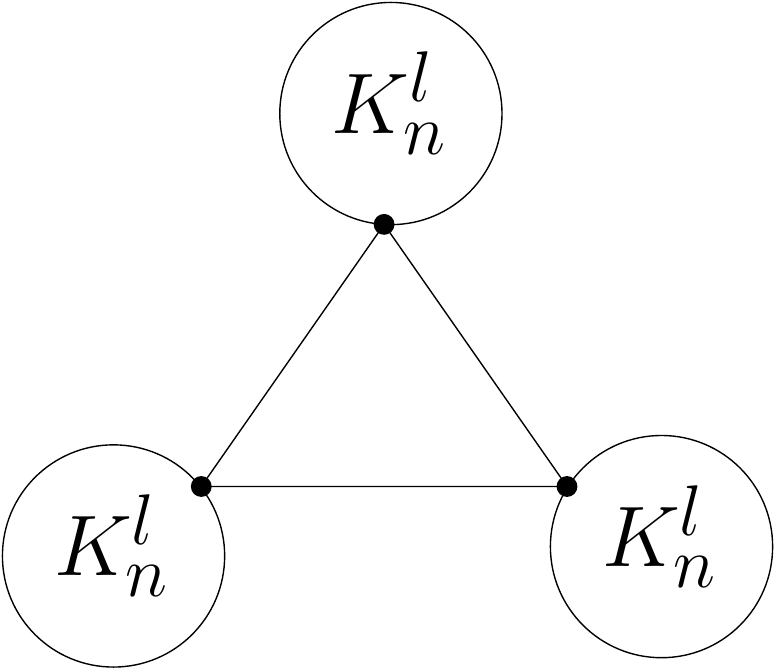}
    \caption{The rooted product $K_3 \circ K_n^l$.}
    \label{fig:empty bags needed}
\end{figure}

\begin{proposition}\label{prop:scw_empty_bag}
For \(n\geq 3\) we have \(\scw(K_3 \circ K_n^l)=n\), and that any tree decomposition with width \(n\) contains at least one empty bag.
\end{proposition}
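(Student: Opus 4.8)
The plan is to prove the two assertions separately: first that $\scw(K_3\circ K_n^l)=n$, and then that every width-$n$ tree-cut decomposition uses an empty bag. Label the three copies of $K_n^l$ as $C_1,C_2,C_3$, glued to $K_3$ at their roots $r_1,r_2,r_3$. For the upper bound I would exhibit an explicit decomposition: take $T=K_{1,3}$ with an empty central bag and $X_{b_i}=V(C_i)$ on the three leaves. Since the only edges leaving a copy are the two $K_3$-edges incident to its root, each link has adhesion $2$, each leaf contributes $|X_{b_i}|+|\adh(b_i)|=n+0=n$, and the central node $c$ contributes $|X_c|+|\adh(c)|=0+3=3$. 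Hence the width is $\max\{2,n,3\}=n$ for $n\ge 3$, so $\scw(K_3\circ K_n^l)\le n$.

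The engine for both the lower bound and the empty-bag claim is that each copy $C_i$ is internally highly connected: any partition of $V(C_i)$ into nonempty parts of sizes $k$ and $n-k$ is crossed by $k(n-k)l\ge (n-1)l$ edges, and since $l>n$ we have $(n-1)l>n$. I would first establish the key structural fact: in any tree-cut decomposition $\mathcal{T}=(T,\mathcal{X})$ of width at most $n$, no link can separate two vertices of the same copy. Indeed, if a vertex of $C_i$ lies in $X_b$ and another lies in $X_{b'}$ with $b\ne b'$, then deleting the link of $T$ incident to $b$ on the $b$--$b'$ path places these two vertices in different components of $T$, so that link's adhesion contains at least $(n-1)l>n$ edges of $C_i$, forcing $\lw(\mathcal{T})>n$. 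Consequently each copy $C_i$ is contained in a single bag $X_{\beta_i}$; that bag then has at least $n$ vertices, giving $\bw(\mathcal{T})\ge n$ and hence $\scw(K_3\circ K_n^l)\ge n$. Together with the upper bound this yields $\scw(K_3\circ K_n^l)=n$.

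For the empty-bag claim I would start from a decomposition of width exactly $n$ and push the structural fact further. Each copy lies in one bag $X_{\beta_i}$; the three nodes $\beta_1,\beta_2,\beta_3$ must be distinct, since a bag containing two copies would have size $\ge 2n>n$. Because $|V(K_3\circ K_n^l)|=3n$ and the bags are disjoint, the three bags $X_{\beta_i}$ are exactly the copies and exhaust all $3n$ vertices, so every other bag is empty. It then remains only to rule out the possibility that $T$ consists of exactly the three nodes $\beta_1,\beta_2,\beta_3$ and no others. In that case $T$ is the path $P_3$; letting $\beta_2$ be its central node, the $K_3$-edge $r_1r_3$ joins a vertex of $X_{\beta_1}$ to a vertex of $X_{\beta_3}$, which lie in different components of $T-\beta_2$, so $|\adh(\beta_2)|\ge 1$ and $\bw(\mathcal{T})\ge |X_{\beta_2}|+1=n+1$, contradicting width $n$. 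Hence $T$ must contain a fourth node, whose bag is empty.

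The main obstacle is the bookkeeping in the structural fact—translating ``a link of $T$ separates two vertices of $C_i$'' into the edge count $k(n-k)l$ and confirming $(n-1)l>n$ under the hypotheses $l>n$ and $n\ge 3$—together with making the final case analysis genuinely exhaustive: one must check that ``no copy is split'' plus the vertex count really does force all bags beyond three to be empty (ruling out trees on one or two nodes along the way), and that the residual $P_3$ configuration has width $n+1$ rather than $n$. Everything else reduces to a direct adhesion computation.
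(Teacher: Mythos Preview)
Your proposal is correct and follows essentially the same approach as the paper: the same $K_{1,3}$ decomposition with an empty central bag for the upper bound, the same ``no link can split a copy of $K_n^l$'' argument for the lower bound (you quantify the adhesion as $(n-1)l$ where the paper simply uses $l$, but both exceed $n$), and the same reduction of the empty-bag claim to ruling out the $P_3$ configuration via the central node's adhesion. The only cosmetic point is that when you write ``letting $\beta_2$ be its central node,'' you are tacitly relabeling the copies so that $C_2$ sits in the middle; making the WLOG explicit would tighten the exposition.
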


\begin{proof}
 Figure \ref{fig:empty_bags_optimal} presents on the left a tree-cut decomposition of width \(n\), accomplished by allowing an empty bag.  Thus \(\scw(G)\leq n\).

\begin{figure}[hbt]
    \centering
    \includegraphics[scale=.5]{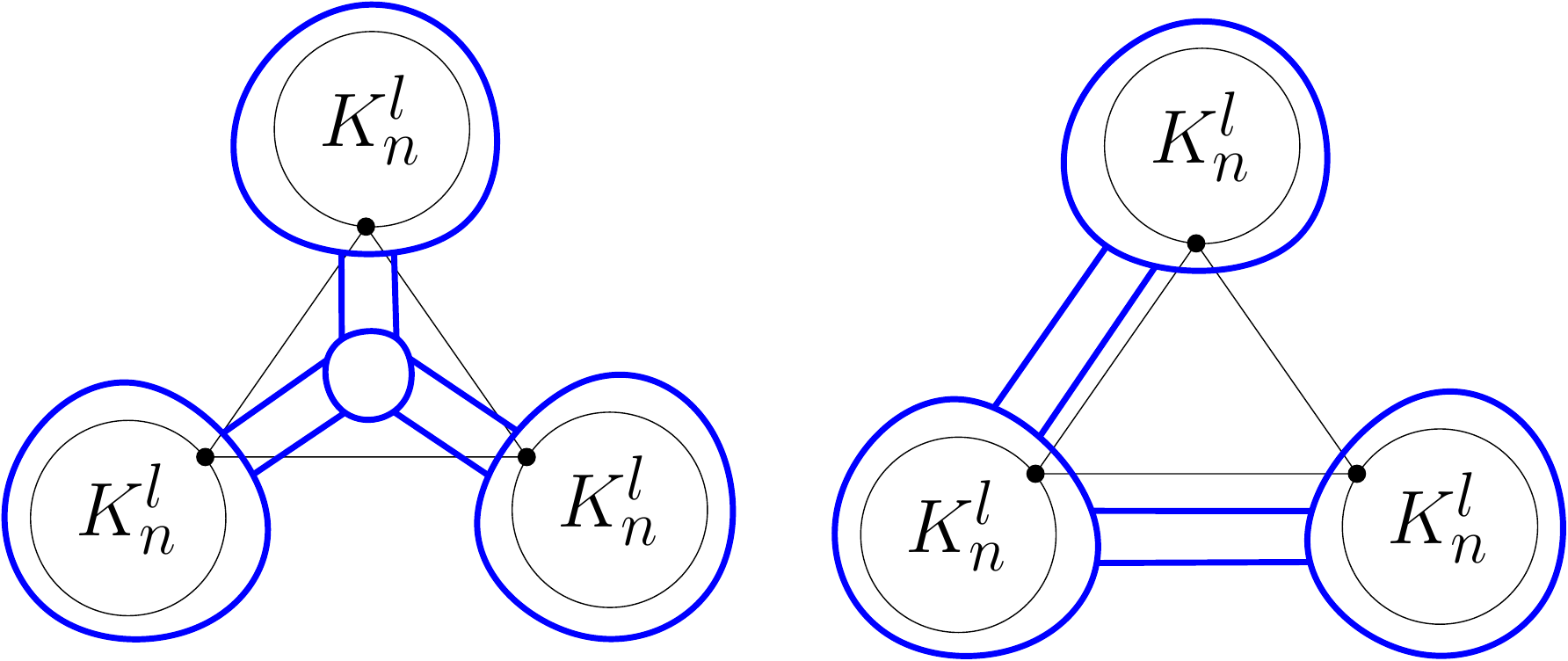}
    \caption{A decomposition with an empty bag on the left, which has lower width than the decomposition with no empty bags on the right.}
    \label{fig:empty_bags_optimal}
\end{figure}

Let \(\mathcal{T}\) be an optimal tree-cut decomposition for the graph.  First, we argue that the vertices in any copy of \(K_n^l\) must be in a common bag.  For if \(v,w\in V(G)\) are in the same \(K_n^l\) but in different bags \(X_b,X_d\), then any link on the path between \(b\) and \(d\) has adhesion at least \(l>n\), giving a higher width of the whole decomposition.  Thus in an optimal decomposition, some bag has at least \(n\) vertices, implying that \(\scw(G)\geq n\), and hence that \(\scw(G)=n\).  From here we know that there are at least three non-empty bags in an optimal tree-cut decomposition, each containing precisely the vertices from a copy of \(K_n^l\) (otherwise some bag would have more than \(n\) vertices, increasing the width).  If such a decomposition does not have any empty bags, tree \(T\) must be the unique tree with three vertices, namely the path \(P_3\); this decomposition is  illustrated on the right in Figure \ref{fig:empty_bags_optimal}.  But this decomposition has width \(n+1\), coming from the node \(b\) in the middle of the path having \(|X_b|+|\adh(b)|=n+1\).  Thus any tree-cut decomposition of \(G\) with width \(\scw(G)=n\) has an empty bag.
\end{proof}

Although empty bags are sometimes required, the following result shows we can assume that they have valence exactly \(3\).

\begin{proposition}\label{prop:empty_bags_trivalent}
Any graph \(G\) has a tree-cut decomposition \(\mathcal{T}\) with \(w(\mathcal{T})=\scw(G)\) such that any empty bag in  \(\mathcal{T}\) has valence \(3\).
\end{proposition}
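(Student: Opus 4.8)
The plan is to start with any optimal tree-cut decomposition $\mathcal{T}=(T,\mathcal{X})$ and repair its empty bags through local modifications of $T$ that never increase the width, until every empty bag has valence exactly $3$. There are three kinds of offending empty bags to eliminate: those of valence $1$, those of valence $2$, and those of valence $\geq 4$. The structural feature I will exploit throughout is that an empty bag contributes no vertices to $G$, so modifying $T$ in a neighborhood of an empty node changes neither the bags nor the edges of $G$, only the way the existing adhesion-cuts are grouped.

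First I would dispose of the low-valence empty bags. If $b$ is an empty leaf node joined to $T$ by a single link $l$, then $\adh(l)=\varnothing$, since no edge of $G$ has an endpoint in $X_b=\varnothing$; deleting $b$ and $l$ thus leaves every other adhesion unchanged and strictly lowers the number of nodes. If $b$ is an empty node of valence $2$ with neighbors $c$ and $d$, then because $X_b=\varnothing$ the cuts defining $\adh(bc)$ and $\adh(bd)$ both coincide with the cut separating the $c$-side of $T$ from the $d$-side; smoothing $b$ (deleting it and inserting the link $cd$) therefore produces a link whose adhesion equals this common set, leaves the adhesion of every other link and node untouched, and again lowers the node count. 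Repeating these two reductions terminates, since each strictly decreases the number of nodes, and once they are exhausted every empty bag has valence at least $3$.

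The main work, and the step I expect to be the crux, is handling an empty node $b$ of valence $k\geq 4$ with neighbors $c_1,\ldots,c_k$. I would replace $b$ by a tree of empty nodes all of valence exactly $3$ (the standard cubic tree on $k-2$ internal nodes whose $k$ leaves are identified with the links to $c_1,\ldots,c_k$), preserving the valence of each $c_i$. The point to verify is that no adhesion exceeds $\scw(G)$. Each original link to $c_i$ still separates the $c_i$-subtree from everything else, so its adhesion is unchanged. Every newly created internal link or node instead separates the subtrees hanging off $b$ into two or more groups; since all bags involved (including the new ones) are empty, every edge crossing such a cut runs between two distinct $c_i$-subtrees and hence lies in $\adh(b)$. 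Thus the adhesion of each new link or node is a subset of $\adh(b)$, and because $b$ was empty we have $|\adh(b)|=|X_b|+|\adh(b)|\leq \bw(\mathcal{T})\leq\scw(G)$. Consequently the split introduces no adhesion larger than $\scw(G)$, so the width is unchanged, and it creates only valence-$3$ empty nodes without disturbing the valence or emptiness of any existing node. Performing this split on each of the finitely many valence-$\geq 4$ empty bags (which does not reintroduce empty bags of valence $1$ or $2$) yields an optimal tree-cut decomposition in which every empty bag has valence exactly $3$.
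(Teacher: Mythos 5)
Your proof is correct and follows essentially the same strategy as the paper: delete empty leaves, smooth empty $2$-valent nodes, and expand high-valence empty nodes while observing that every new link or node adhesion is a subset of $\adh(b)$, which is bounded by the bag width since $X_b=\varnothing$. The only (cosmetic) difference is that you replace a valence-$k$ empty node by a full cubic tree in one step, whereas the paper iteratively splits off one trivalent node at a time; both are valid.
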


\begin{proof} Let \(\mathcal{T}\) be a tree-cut decomposition of \(G\) with \(w(\mathcal{T})=\scw(G)\).
First we note that if \(X_b\) is an empty bag such that \(b\) has valence \(1\) (that is, an empty bag corresponding to a leaf), then \(|X_b|=0\) and \(\adh(b)=0\).  It follows we may iteratively delete any such \(b\) from the tree-cut decomposition \(\mathcal{T}\) until none remain without changing the width.

Next, suppose that \(X_c\) is an empty bag where \(c\) has valence \(2\), say with neighboring nodes \(b\) and \(d\) connected by links \(l_b\) and \(l_d\) respectively.  Note that \(|X_c|+|\adh(c)|=|\adh(c)|\), and that \(\adh(c)=\adh(l_b)\), since deleting \(b\) from the tree causes the exact same disconnections as deleting \(l_b\). The same argument shows \(\adh(c)=\adh(l_d)\).  Now, consider the new tree-cut decomposition \(\mathcal{T}'\) obtained by deleting \(c\) and connecting \(b\) and \(d\) with a new link \(l\).  We have \(\adh(l)=\adh(l_b)=\adh(l_d)=|X_c|+|\adh(c)|\), and no other bag sizes, vertex adhesions, or link adhesions have changed, so \(w(\mathcal{T}')=w(\mathcal{T})\).  Thus we may iteratively smooth over \(2\)-valent nodes corresponding to empty bags until none remain.

Finally, suppose that \(X_b\) is an empty bag where \(b\) has valence strictly great than \(3\), say with neighboring bags \(b_1,\ldots,b_k\) connected by links \(l_1,\ldots,l_k\).  Delete \(b\), and introduce two new nodes, \(b'\) and \(b''\), connected by a link \(l\). Connect \(b'\) to \(b_1\) and \(b_2\) by links \(l_1'\) and \(l_2'\), and connect \(b''\) to \(b_3,\ldots,b_k\) by links \(l_3',\ldots,l_4'\).  Letting \(X_{b'}=X_{b''}=\emptyset\), we have a new tree decomposition \(\mathcal{T}'=(T',\mathcal{X}')\).  We make the following observations, using subscripts to indicate which tree-cut decomposition is being considered:
\begin{itemize}
    \item Deleting \(b\) in \(T\) causes at least as many edge separations as deleting \(l\) in \(T'\), so \(\adh_{\mathcal{T}'}(l)\subseteq \adh_{\mathcal{T}}(b)\), and thus \(|\adh_{\mathcal{T}'}(l)|\leq |\adh_{\mathcal{T}}(b)|\).  Similarly, deleting \(b\) in \(T\) causes at least as many edge separations as deleting \(b'\) or \(b''\) in \(T'\), giving us \(|\adh_{\mathcal{T}'}(b')|\leq |\adh_{\mathcal{T}}(b)|\) and \(|\adh_{\mathcal{T}'}(b'')|\leq |\adh_{\mathcal{T}}(b)|\).
    \item For all \(i\), we have \(\adh_{\mathcal{T}'}(l_i')=\adh_{\mathcal{T}}(l_i)\), since deleting either \(l_i\) or \(l_i'\) results in the same partition of the vertices of \(G\).
    \item All unmodified links, nodes, and bags have the same adhesion and size as before.
\end{itemize}
It follows that in constructing our new tree-cut decomposition, we did not increase width, and we replaced a node corresponding to an empty bag of valence \(k\geq 4\) with two nodes, one of valence \(3\) and one of valence \(k-1\).  Thus we may repeat this procedure until no empty bags of valence more than \(4\) exist.
\end{proof}

This allows us to bound the required number of empty bags in an optimal tree-cut decomposition, relative to the number of non-empty bags.

\begin{corollary}\label{corollary:number_of_leaves} Any graph has an optimal tree-cut decomposition with all \(m\) leaf nodes corresponding to nonempty bags, with at most \(m-2\) nodes corresponding to empty bags.
\end{corollary}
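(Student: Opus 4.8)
The plan is to build on Proposition \ref{prop:empty_bags_trivalent}: start from a tree-cut decomposition \(\mathcal{T} = (T, \mathcal{X})\) with \(w(\mathcal{T}) = \scw(G)\) in which every empty bag has valence exactly \(3\). This decomposition already settles the first claim for free. A leaf node of \(T\) has valence \(1\), whereas every empty bag sits at a node of valence \(3\); hence no leaf bag can be empty, so all \(m\) leaf nodes correspond to nonempty bags. It remains only to bound the number of empty bags, and for this I would run a degree-counting argument on the tree \(T\).

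First I would dispose of the degenerate case: if \(T\) has a single node there are no empty bags and nothing to prove, so we may assume \(T\) has at least one link. Writing \(n_i\) for the number of nodes of \(T\) of valence \(i\) (so \(m = n_1\)), the handshake identity for \(T\) — namely that the sum of the valences equals \(2\bigl(|V(T)| - 1\bigr)\) — rearranges to
\[
\sum_{b \in V(T)} \bigl(2 - \val(b)\bigr) = 2.
\]
Leaves contribute \(+1\) apiece, valence-\(2\) nodes contribute \(0\), and every node of valence at least \(3\) contributes at most \(-1\). Isolating the leaf contribution yields
\[
m = 2 + \sum_{\val(b) \geq 3} \bigl(\val(b) - 2\bigr) \geq 2 + n_{\geq 3},
\]
where \(n_{\geq 3}\) is the number of nodes of valence at least \(3\), since each such node contributes at least \(1\) to the sum. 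Because every empty bag has valence exactly \(3\), the number of empty bags is at most \(n_3 \leq n_{\geq 3} \leq m - 2\), which is the desired bound.

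I expect this to be a short argument, so the only points requiring genuine care are structural rather than computational. The first is making sure Proposition \ref{prop:empty_bags_trivalent} really leaves no empty bag at a leaf, which is exactly why the ``valence \(3\)'' conclusion of that proposition is the right hypothesis to invoke here. The second is correctly setting up the degree-sum identity and confirming that the empty bags, being trivalent, are counted among the valence-\(\geq 3\) nodes governed by the deficit \(-2\); the inequality \(n_{\geq 3} \leq m - 2\) is then immediate. No step should present a substantial obstacle, and the corollary follows as a direct consequence of Proposition \ref{prop:empty_bags_trivalent} together with the tree degree-sum identity.
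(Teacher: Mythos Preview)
Your proposal is correct and follows essentially the same approach as the paper: both invoke Proposition~\ref{prop:empty_bags_trivalent} to force empty bags to be trivalent (hence not leaves), and then apply the handshake identity on \(T\) to obtain \(n_3 \leq m - 2\). Your formulation via \(\sum_b (2 - \val(b)) = 2\) is a minor repackaging of the paper's \(k_1 + 2k_2 + 3k_3 + \cdots = 2k - 2\), but the content is identical.
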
 

\begin{proof}
If a tree has \(k\) vertices, then it has \(k-1\) edges.   Letting \(k_i\) denote the number of vertices of valence \(i\), we have
\[k_1+2k_2+3k_3+\cdots=2k-2,\]
since each edge is shared by two vertices.  Noting that \(k=k_1+k_2+k_3+\cdots\), we have
\[-k_1+k_3+2k_4+\cdots=-2,\]
or \[k_3+2k_4+\cdots=k_1-2.\]
It follows that \(k_3\leq k_1-2\); that is, the number of vertices of valence \(3\) in a tree is at most \(2\) less than the number of leaves.

Let \(G\) be a graph, with \(\mathcal{T}\) an optimal tree-cut decomposition with all empty bags having valence \(3\). Then if \(m\) is the number of (necessarily nonempty) leaf nodes,  there are at most \(m-2\) empty bags, as claimed.
\end{proof}

We apply our new understanding of empty bags to prove that there exists a graph whose optimal tree-cut decomposition requires a disconnected bag.

\begin{proposition}
Let \(G\) be the graph pictured on the left in Figure \ref{fig:scw=2_disconnected}.  Any tree-cut decomposition \(\mathcal{T}\) of \(G\) with \(w(\mathcal{T})=\scw(G)\) has a nonempty bag \(X_b\) with \(G[X_b]\) disconnected.
\end{proposition}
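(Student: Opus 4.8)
The plan is to first pin down that $\scw(G) = 2$, so that ``optimal'' means ``width $2$,'' and then to rule out every width-$2$ tree-cut decomposition in which all nonempty bags induce connected subgraphs. For the upper bound $\scw(G) \le 2$ I would exhibit the decomposition drawn on the right of Figure~\ref{fig:scw=2_disconnected}: take the tree to be the star $K_{1,3}$, place the two valence-$3$ hub vertices $u,w$ of $G$ together in the central bag, and place each of the three valence-$2$ vertices in its own leaf bag. Each link then carries exactly the two edges joining a valence-$2$ vertex to $u$ and $w$, so every link adhesion is $2$; the central node has node adhesion $0$ since there are no edges among the leaf vertices, giving bag width $2+0=2$; and each leaf bag has width $1$. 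Thus the width is $2$, and the central bag $\{u,w\}$ is disconnected because $u$ and $w$ are nonadjacent. The matching lower bound $\scw(G)\ge 2$ follows from $\tw(G)\le\sn(G)\le\scw(G)$ (the last inequality being Theorem~\ref{theorem:main}): since $G$ contains a cycle it is not a forest, so $\tw(G)\ge 2$. Hence $\scw(G)=2$ and the displayed decomposition is optimal.

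For the main claim I would argue by contradiction: suppose $\mathcal{T}=(T,\mathcal{X})$ has width $2$ and every nonempty bag $X_b$ induces a connected subgraph. Since $|X_b| \le |X_b| + |\adh(b)| \le 2$, every bag has at most two vertices; and a connected two-vertex bag must consist of two adjacent vertices and, because $|X_b|=2$ forces $|\adh(b)|=0$, must have node adhesion $0$. I would then show that neither hub, say $u$, can be placed consistently. If $u$ is alone in a leaf bag, the single incident link carries all three of its edges, giving adhesion $3$. If $u$ is alone in an internal bag, the constraint $|\adh(u)|\le 1$ forces at least two of $u$'s neighbors to lie on a common side of the tree together with $w$; but then the link toward that side carries $u$'s two edges to those neighbors \emph{plus} the edge from $w$ to the neighbor stranded on the far side, again reaching adhesion $3$. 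If instead $u$ lies in a connected two-vertex bag $\{u,m\}$ with a valence-$2$ neighbor $m$, the node-adhesion-$0$ condition forces all remaining vertices onto one side of this bag, so it behaves like a leaf whose incident link must transmit $u$'s two remaining edges together with $m$'s edge to $w$, once more giving adhesion $3$. Each case contradicts $\lw(\mathcal{T})\le 2$, so no such decomposition exists.

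The crux, and the step I expect to require the most care, is the internal-vertex case, because there the admissible placements of the remaining vertices are governed by two constraints simultaneously ($|\adh(u)|\le 1$ and every link adhesion $\le 2$), and I must verify that \emph{every} admissible side-assignment strands some edge across a link that already carries two edges incident to $u$. I would phrase this entirely in terms of which side of each link a vertex falls on, so that the argument is insensitive to the shape of $T$ and to the presence of empty bags (which by Proposition~\ref{prop:empty_bags_trivalent} we may even take to be trivalent). Once both hubs are shown to admit no connected placement, the contradiction is complete, and together with the explicit width-$2$ decomposition above we conclude that every optimal tree-cut decomposition of $G$ contains a nonempty disconnected bag.
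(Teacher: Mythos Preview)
You have misidentified the graph. Your entire argument is written for $K_{2,3}$ (two ``hubs'' $u,w$ of valence~$3$ and three valence-$2$ vertices), but the graph $G$ in Figure~\ref{fig:scw=2_disconnected} has eight vertices $u_1,u_2,v_1,v_2,w_1,w_2,x,y$. Each of the three pairs $\{u_1,u_2\},\{v_1,v_2\},\{w_1,w_2\}$ is internally $3$-edge-connected (any bipartition separating the two vertices of a pair has at least three crossing edges), while $x$ and $y$ are nonadjacent valence-$3$ vertices, each joined to one vertex from every pair. The width-$2$ decomposition on the right of the figure is a star with the three pairs in leaf bags and the disconnected bag $\{x,y\}$ at the centre; it is not the $K_{1,3}$-decomposition of $K_{2,3}$ you describe.

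Because the graph is different, your hub-placement case analysis does not prove the proposition as stated. The paper's argument proceeds quite differently: after reducing (via Proposition~\ref{prop:empty_bags_trivalent}) to decompositions whose empty bags are trivalent, the $3$-edge-connectivity of each pair together with the bound $|X_b|\le 2$ forces the three bags $\{u_1,u_2\},\{v_1,v_2\},\{w_1,w_2\}$; the connectedness hypothesis then forces $x$ and $y$ into singleton bags. Since $\val(x)=\val(y)=3$, neither $b_x$ nor $b_y$ can be a leaf, so the leaves of $T$ are among $b_u,b_v,b_w$, and Corollary~\ref{corollary:number_of_leaves} limits the number of empty bags to at most one. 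This reduces the problem to a finite list of candidate trees (two path orders on five nodes, and subdivided three-leaf stars with at most one empty central node), each of which is then checked directly to have width at least~$3$. Your local ``analyse one hub'' strategy might be adaptable to this graph, but as written it addresses the wrong object.
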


\begin{figure}[hbt]
    \centering
    \includegraphics{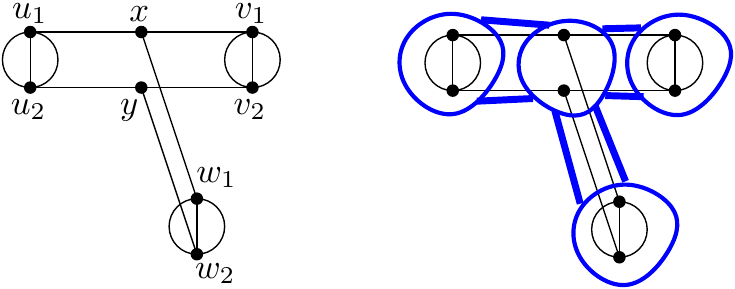}
    \caption{A graph of screewidth \(2\), requiring a disconnected bag.}
    \label{fig:scw=2_disconnected}
\end{figure}

\begin{proof}
First we note that \(\scw(G)\leq 2\) thanks to the tree-cut decomposition on the right in Figure \ref{fig:scw=2_disconnected}; and since \(G\) is not a tree, we have \(\scw(G)=2\) (see Proposition \ref{proposition:tree_and_complete}, below).

Suppose for the sake of contradiction that there exists a tree-cut decomposition \(\mathcal{T}=(T,\mathcal{X})\) of \(G\) with \(w(\mathcal{T})=2\) and all nonempty bags \(X_b\in \mathcal{X}\) satisfying \(G[X_b]\) connected. By the argument from Proposition \ref{prop:empty_bags_trivalent} (which would not introduce disconnected bags), we may assume that \(\mathcal{T}\) has no empty bags of valence other than \(3\).

We first note that \(u_1\) and \(u_2\) must be contained in the same bag; otherwise any link on the unique path connecting their corresponding nodes would have adhesion at least \(3\). Since no bag can have more than two vertices, we know that one of our bags is \(X_{b_u}=\{u_1,u_2\}\).  An identical argument gives us bags \(X_{b_v}=\{v_1,v_2\}\) and \(X_{b_w}=\{w_1,w_2\}\).  Since there are no disconnected bags, each of \(x\) and \(y\) must be in their own bag, say \(X_{b_x}\) and \(X_{b_y}\), respectively.

Since \(\textrm{val}(x)=\textrm{val}(y)=3\), we know that neither \(b_x\) nor \(b_y\) is a leaf. Since \(\mathcal{T}\) has no leaves corresponding to empty bags, the only possible leaves are \(b_u\), \(b_v\), and \(b_w\).  Thus by Corollary \ref{corollary:number_of_leaves}, we may assume that there are either \(2\) leaves and no empty bags, or  \(3\) leaves and at most \(1\) empty bag.

In the case of \(2\) leaves, we have that \(T\) must be a path graph, without loss of generality its nodes ordered \((b_u,b_x,b_v,b_y,b_w)\) or \((b_u,b_x,b_y,b_v,b_w)\).  If \(T\) has \(3\) leaves, then it must be a (possibly trivial) subdivision of a star graph with \(3\) leaves, with at most one empty bag (which would necessarily correspond to the unique node of valence \(3\)). All these possibilities are illustrated in Figure \ref{fig:scw_not_2}. We use \(X_{b_\emptyset}\) to denote an empty bag, and illustrate several edges from \(E(G)\) for each decomposition.  Note that in each decomposition we reach a contradiction, with either a node or a link yielding width at least \(3\).  Having reached a contradiction, we may conclude that any tree-cut decomposition of \(G\) with width \(2\) has a disconnected bag.
\end{proof}
\begin{figure}[hbt]
    \centering
    \includegraphics[scale=0.7]{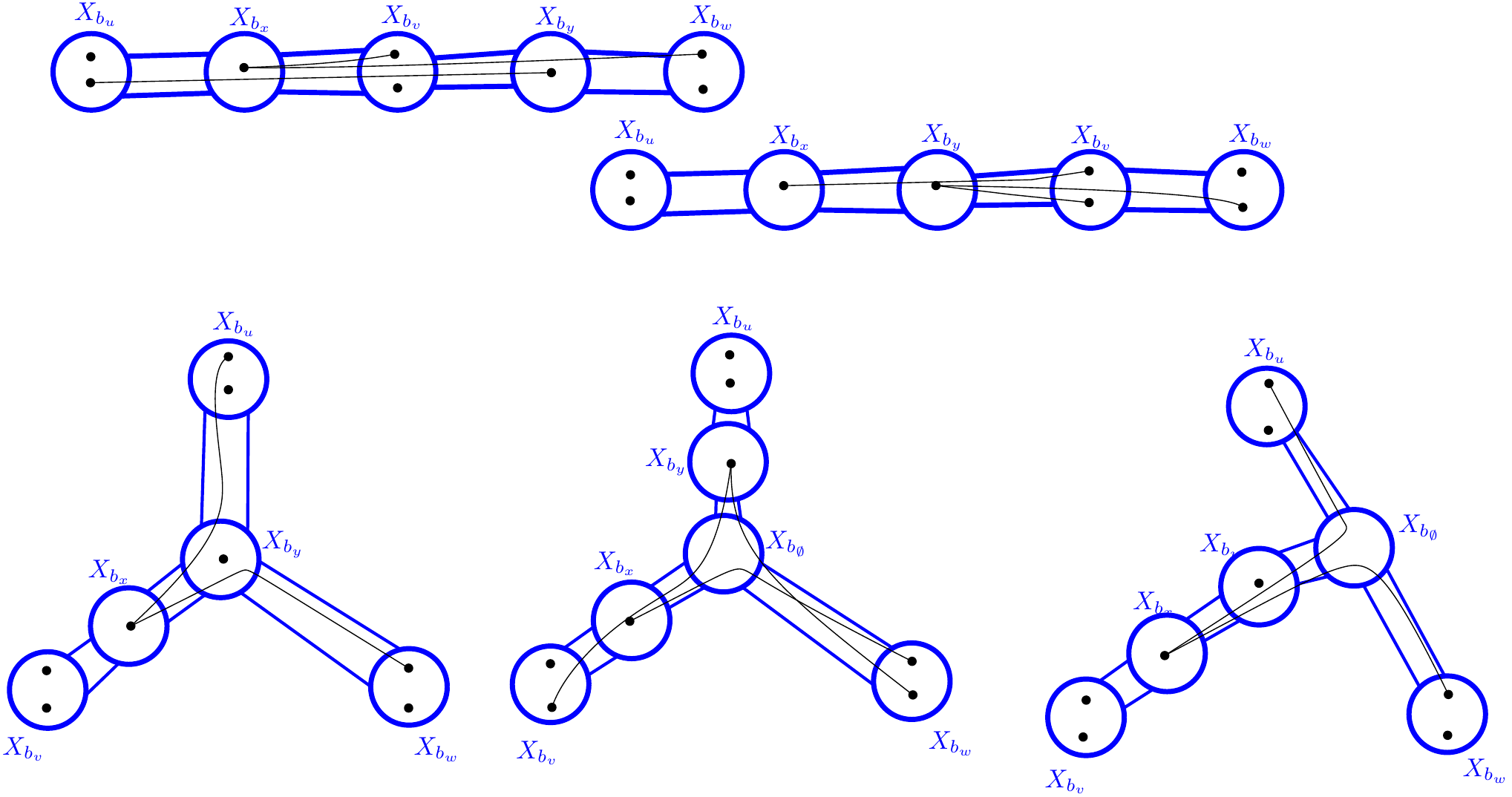}
    \caption{Possible decompositions of our graph \(G\), all with width at least \(3\).}
    \label{fig:scw_not_2}
\end{figure}

We now consider the behavior of screewidth under various graph theoretic operations.  We will later see in Lemma \ref{lemma:scw_not_minor_monotone} that screewidth can increase under taking minors of graphs.  However, this cannot happen under taking subgraphs, or under subdividing or refining edges.

\begin{proposition}\label{prop:subgraph}
Let $G$ be a graph and $H$ a subgraph of $G$. Then $\scw(H) \leq \scw(G)$.
\end{proposition}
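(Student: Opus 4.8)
The plan is to take an optimal tree-cut decomposition of $G$ and restrict its bags to the vertex set of $H$, keeping the underlying tree unchanged. Since $H$ is a subgraph of $G$, we have $V(H) \subseteq V(G)$ and $E(H) \subseteq E(G)$, and the guiding intuition is that deleting vertices and edges can only shrink bag sizes and adhesions, never grow them. So the same tree structure that works for $G$ should, after trimming, work at least as well for $H$.

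Concretely, let $\mathcal{T} = (T, \mathcal{X})$ be a tree-cut decomposition of $G$ with $w(\mathcal{T}) = \scw(G)$. I would define $\mathcal{T}' = (T, \mathcal{X}')$ on the \emph{same} tree $T$ by setting $X_b' = X_b \cap V(H)$ for each node $b \in V(T)$. First I would check that $\mathcal{T}'$ is a valid tree-cut decomposition of $H$: the restricted bags still cover $V(H)$ (since the original bags cover $V(G) \supseteq V(H)$) and remain pairwise disjoint (as subsets of disjoint sets). Note that empty restricted bags are permitted by the definition, so nothing breaks if some $X_b'$ becomes empty.

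Next I would compare the three quantities controlling width. Every surviving vertex of $H$ lies in the same node as it did in $\mathcal{T}$, so the node-membership structure is unchanged on $V(H)$. Hence for any link $l$, an edge $e \in E(H)$ crosses $l$ in $\mathcal{T}'$ only if it already crossed $l$ in $\mathcal{T}$; since moreover $E(H) \subseteq E(G)$, we get $\adh_{\mathcal{T}'}(l) \subseteq \adh_{\mathcal{T}}(l)$ and thus $\lw(\mathcal{T}') \leq \lw(\mathcal{T})$. The identical argument for nodes gives $\adh_{\mathcal{T}'}(b) \subseteq \adh_{\mathcal{T}}(b)$ for every $b$, and since $|X_b'| \le |X_b|$ we obtain $|X_b'| + |\adh_{\mathcal{T}'}(b)| \le |X_b| + |\adh_{\mathcal{T}}(b)|$, hence $\bw(\mathcal{T}') \le \bw(\mathcal{T})$. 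Combining these, $w(\mathcal{T}') \le w(\mathcal{T}) = \scw(G)$, and since $\mathcal{T}'$ is a tree-cut decomposition of $H$ we conclude $\scw(H) \le w(\mathcal{T}') \le \scw(G)$.

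I expect there to be essentially no deep obstacle here; the only point requiring care is the monotonicity of adhesions, where one must verify that restricting bags cannot create a \emph{new} crossing edge. It cannot, precisely because each retained vertex keeps its node assignment, so for any retained edge the set of links (respectively nodes) separating its endpoints is unchanged while the edge set under consideration only shrinks. An alternative, equally clean route would be to reduce to the two elementary operations (deleting a single edge, and deleting a single vertex together with its incident edges) and induct on the number of such deletions, but the direct restriction argument above handles both at once.
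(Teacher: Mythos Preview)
Your proof is correct and follows exactly the same approach as the paper: restrict an optimal tree-cut decomposition of $G$ to $V(H)$ while keeping the tree $T$, and observe that bag sizes and all adhesions can only decrease. If anything, your write-up is more careful than the paper's, since you explicitly verify that $\mathcal{T}'$ is a valid tree-cut decomposition and justify the adhesion containments, whereas the paper simply asserts that removing vertices and edges cannot increase any of the relevant quantities.
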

\begin{proof}
Let $\mathcal{T} = (T, \mathcal{X})$ be an optimal tree-cut decomposition of $G$.  Consider the tree-cut decomposition \(\mathcal{T}'=(T,\mathcal{X}')\) on \(H\), where $\mathcal{X}' = \{X \cap V(H) \,|\, X \in \mathcal{X}\}$ with \(X'_b=X_b\cap V(H)\).  We claim that  $w(\mathcal{T}') \leq w(\mathcal{T})$. Since $\mathcal{T}'$ was obtained from $\mathcal{T}$ only by removing vertices and edges, each bag size, node adhesion, and edge adhesion must not have increased. Thus, we have a tree-cut decomposition on $H$ with width at most $\scw(G)$, so $\scw(H) \leq \scw(G)$. 
\end{proof}

\begin{lemma}\label{lemma:subdivision}
Let $G$ be a graph, and let $G'$ be a graph obtained from $G$ by subdividing an edge. Then $\scw(G') \leq \scw(G)$.
\end{lemma}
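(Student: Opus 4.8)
The plan is to start from an optimal tree-cut decomposition $\mathcal{T} = (T, \mathcal{X})$ of $G$ with $w(\mathcal{T}) = \scw(G)$, and modify it into a tree-cut decomposition $\mathcal{T}'$ of $G'$ of no greater width; this immediately yields $\scw(G') \le w(\mathcal{T}') \le \scw(G)$. Write the subdivided edge as $e = uv$, let $m$ be the new $2$-valent vertex, so that in $G'$ the edge $e$ is replaced by the two edges $um$ and $mv$, and suppose $u \in X_b$ and $v \in X_d$. The whole difficulty is to find a home for $m$ so that the two new edges together cross exactly the tree-links that the single edge $uv$ used to cross, while creating $m$'s bag does not inflate any bag width.

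First I would handle the generic case $b \ne d$. Pick any link $l$ lying on the unique $b$--$d$ path in $T$, subdivide $l$ by inserting a new node $n$ (so that $l$ becomes two links meeting at $n$), set $X_n = \{m\}$, and leave all other bags unchanged. The key bookkeeping is as follows. For each link on the $b$--$d$ path, the edge $uv$ is deleted from its adhesion and replaced by exactly one of $um$ or $mv$ (whichever has its far endpoint on the opposite side of that link), so every such link adhesion keeps the same cardinality; links off the path are untouched. For the new node $n$, both $um$ and $mv$ have an endpoint in $X_n$, so they are excluded from $\adh(n)$, giving $|\adh(n)| = |\adh_{\mathcal{T}}(l)| - 1$ and hence bag width $|X_n| + |\adh(n)| = |\adh_{\mathcal{T}}(l)| \le \lw(\mathcal{T}) \le w(\mathcal{T})$. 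One then checks that no other node adhesion changes: for any node $c$, the pair $um, mv$ contributes to $\adh(c)$ exactly when $uv$ did, because $n$ sits on the $b$--$d$ path. Thus $w(\mathcal{T}') \le w(\mathcal{T})$.

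The remaining case is $b = d$, where $u$ and $v$ share a bag $X_b$ and there is no path-link to subdivide. Here I would instead attach $m$ in a new pendant leaf $n$ adjacent to $b$ with $X_n = \{m\}$. The new link then has adhesion exactly $\{um, mv\}$, of size $2$, while all other adhesions and bag sizes are unchanged, since both new edges have an endpoint in $X_b$ and therefore never enter the adhesion of $b$ or of any other node. The one point to verify is that this size-$2$ link does not exceed the width; this is automatic because $u, v \in X_b$ forces $|X_b| \ge 2$, whence $w(\mathcal{T}) \ge \bw(\mathcal{T}) \ge |X_b| + |\adh(b)| \ge 2$ already.

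I expect the main obstacle to be purely the adhesion bookkeeping in the case $b \ne d$: one must verify simultaneously that the two links created by subdividing $l$ each inherit the cardinality of $\adh_{\mathcal{T}}(l)$, that the inserted node's bag-plus-adhesion equals $|\adh_{\mathcal{T}}(l)|$ rather than exceeding it, and that the adhesions of all untouched nodes and links are genuinely unaffected by swapping the single edge $uv$ for the pair $um, mv$. Once these are confirmed, combining the two cases gives $\scw(G') \le w(\mathcal{T}') \le w(\mathcal{T}) = \scw(G)$, as desired.
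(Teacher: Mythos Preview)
Your proposal is correct and follows essentially the same approach as the paper: split into the two cases according to whether $u$ and $v$ share a bag, attach a pendant leaf for the new vertex in the shared-bag case, and subdivide a link on the $b$--$d$ path in the other case, then check adhesions. The only cosmetic difference is that the paper subdivides specifically the first link of the path (the one incident to the bag containing $u$), whereas you allow any link on the path; your slightly more general choice works for the same reasons.
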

\begin{proof}

Let $\cT = (T, \mathcal{X})$ be a tree-cut decomposition of $G$ of width $\scw(G)$. Let $s = vw$ be an edge of $G$ to subdivide by adding a vertex $u$, and let $s_1 = vu$ and $s_2 = uw$ be the new edges after subdividing. 

We now deal with two cases.  First, suppose that $v$ and $w$ are in the same bag $X_b$. This means that \(w(\mathcal{T})\geq |X_b|+|\adh(b)|\geq 2\). Construct a tree-cut decomposition on \(G'\) be adding a leaf node \(c\) adjacent to \(b\) connected by a link \(l\), where \(X_c=\{u\}\). Then \(\adh(c)=\emptyset\), so \(|X_c|+|\adh{c}|=1< w(\mathcal{T})\).  Furthermore, \(\adh(l)\) consists of two edges, so \(|\adh(l)|\leq w(\mathcal{T})\). Since the rest of $\mathcal{T}'$ is identical to $\mathcal{T}$, it follows that the width of $\mathcal{T}'$ is equal to the width of $\mathcal{T}$.

For the second case, suppose that $v$ and $w$ are in different bags, say \(X_{b_v}\) and \(X_{b_w}\). Let \(b_v=b_{u_0}-b_{u_1}-\cdots-b_{u_k}=b_w\) be the unique path from \(b_v\) to \(b_w\) on \(T\), noting that we may have \(k=1\) if there are no intermediate vertices.  Let \(l\) be the link connecting \(b_v\) to \(b_{u_1}\), and create a tree-cut decomposition of \(G'\) by subdividing \(l\) with a node \(b_u\) where \(X_{b_u}=\{u\}\), say with \(l'\) connecting \(b_v\) to \(b_u\) and \(l''\) connecting \(b_u\) to \(b_{u_1}\).  Note that \(\adh(b_u)=\adh(l)-\{s\}\), so  \(|X_{b_u}|+|\adh(b_u)|=1+|\adh(l)|-1=|\adh(l)|\).  Moreover, \(\adh(l')=(\adh(l)-\{s\})\cup \{s_1\}\), and  \(\adh(l'')=(\adh(l)-\{s\})\cup \{s_2\}\).  No other bag sizes change, and the only possible change to a node or link adhesion is replacing \(s\) with \(s_i\) for some \(i\).  Thus \(w(\mathcal{T}')=w(\mathcal{T})\).

We conclude that \(\scw(G')\leq \scw(G)\). 
\end{proof}

\begin{lemma}\label{lemma:smoothing}
Let $G$ be a graph, and let $G'$ be a graph obtained from $G$ by smoothing a 2-valent vertex. Then $\scw(G') \leq \scw(G)$.
\end{lemma}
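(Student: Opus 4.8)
The plan is to reuse the underlying tree of an optimal decomposition of $G$ and to obtain a decomposition of $G'$ by simply deleting the smoothed vertex from its bag. Note first that, although smoothing is the inverse operation to subdivision, Lemma \ref{lemma:subdivision} only yields $\scw(G) \le \scw(G')$ (apply it to $G'$, a subdivision of which is $G$), which is the opposite inequality to the one we want; so a direct argument is needed. Write $v$ for the $2$-valent vertex being smoothed, with distinct neighbors $u,w$ and incident edges $s_1 = vu$ and $s_2 = vw$, which smoothing replaces by the single edge $s = uw$ while deleting $v$. Let $\mathcal{T} = (T,\mathcal{X})$ be a tree-cut decomposition of $G$ with $w(\mathcal{T}) = \scw(G)$, and let $b_v, b_u, b_w$ denote the nodes whose bags contain $v, u, w$ respectively (these may coincide). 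I would define $\mathcal{T}' = (T, \mathcal{X}')$ by removing $v$ from its bag, i.e.\ $X'_{b_v} = X_{b_v}\setminus\{v\}$ and $X'_b = X_b$ for $b \ne b_v$; since $V(G') = V(G)\setminus\{v\}$, this is a valid tree-cut decomposition of $G'$, and every edge other than $s_1, s_2, s$ contributes identically to every adhesion in $\mathcal{T}$ and in $\mathcal{T}'$.

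The main work is to check that no adhesion grows. For a link $l$, the only possible changes are that $s_1$ and $s_2$ vanish and that $s$ may newly appear; moreover $s$ crosses $l$ exactly when $l$ lies on the path from $b_u$ to $b_w$ in $T$. Here I would invoke the standard fact that in a tree this path is contained in the union of the paths $b_u$--$b_v$ and $b_v$--$b_w$, so whenever $s$ crosses $l$, at least one of $s_1, s_2$ crossed $l$ in $\mathcal{T}$. Thus the gain of $s$ is always offset by the loss of $s_1$ or $s_2$, giving $|\adh_{\mathcal{T}'}(l)| \le |\adh_{\mathcal{T}}(l)|$.

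For the bag widths I would argue node by node. For $b \ne b_v$ the bag size is unchanged, and the same offsetting works: if $s \in \adh_{\mathcal{T}'}(b)$ then $b$ separates $b_u$ from $b_w$ in $T$, and checking which component of $T-b$ contains $b_v$ shows that $b$ already separated $v$ from $u$ or from $w$, so $s_1$ or $s_2 \in \adh_{\mathcal{T}}(b)$. The step I expect to be the main obstacle is the node $b = b_v$ itself: there $s_1$ and $s_2$ never belonged to $\adh_{\mathcal{T}}(b_v)$ (each has an endpoint inside $X_{b_v}$), so the offsetting argument fails and $s$ could genuinely be added to the adhesion. The resolution is that the bag has simultaneously lost the vertex $v$, so the relevant quantity satisfies $|X'_{b_v}| + |\adh_{\mathcal{T}'}(b_v)| \le (|X_{b_v}|-1) + (|\adh_{\mathcal{T}}(b_v)| + 1) = |X_{b_v}| + |\adh_{\mathcal{T}}(b_v)|$. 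Combining the link and bag bounds gives $w(\mathcal{T}') \le w(\mathcal{T}) = \scw(G)$, and hence $\scw(G') \le \scw(G)$.
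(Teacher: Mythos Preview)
Your proof is correct and follows the same approach as the paper: both take an optimal tree-cut decomposition of $G$, delete the smoothed vertex from its bag, and verify that no link or bag width increases, with the key observation that at the smoothed vertex's node the possible gain of $s$ in the adhesion is offset by the loss of the vertex from the bag. Your presentation is in fact slightly cleaner, handling all link and non-$b_v$ node adhesions uniformly via the tree-path containment fact rather than the paper's case split on which of $u,v,w$ share a bag.
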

\begin{proof}
Let $\cT = (T, \mathcal{X})$ be a tree-cut decomposition of $G$ of width $\scw(G)$, and let $v-u-w$ be a path on \(3\) vertices of $G$ to refine, with $u$ being a $2$-valent vertex. Let \(s_1\) and \(s_2\) be the edges connecting \(v\) to \(u\) and \(u\) to \(w\) in \(G\), and let \(s\) be the new edge connecting  \(v\) to \(w\) in \(G'\). Let \(\cT'\) be the tree-cut decomposition of \(G'\) obtained by deleting \(u\) from its bag.

First we assume that at least two of \(v,u,w\) are in the same bag.
If $u$ is in the same bag as at least one of $v$ and $w$, say \(v\), then none of the adhesion sets change, except possibly swapping \(s_2\) for \(s\); and the size of the bag containing $u$ decreases by one. Alternatively, if \(v\) and \(w\) are in the same bag and \(u\) is in another, then smoothing at \(u\) will decrease a bag size and remove \(s_1\) and \(s_2\) from any adhesions, without introducing \(s\) to any.  Thus in this case we have \(w(\mathcal{T}')\leq w(\mathcal{T})\).
For the second case, assume that $v,u,w$ are all in different bags, say \(X_{b_v}\), \(X_{b_u}\), and \(X_{b_w}\). Without loss of generality, we may assume \(b_w\) does not lie on the path in $T$ from \(b_v\) to \(b_u\).  When we delete \(s_1\) and \(s_2\) and replace them with \(s\), any adhesion set that loses \(s_1\) or \(s_2\) will either stay the same size or decrease, since at most it would gain \(s\).  The only other change to an adhesion is that \(\adh(b_u)\) might gain the edge \(s\), but \(X_{b_u}\) loses a vertex, meaning \(|X_{b_u}|+|\adh(b_u)|\) cannot increase when passing from \(\mathcal{T}\) to \(\mathcal{T}'\). Hence \(w(\mathcal{T}')\leq w(\mathcal{T})\).

We conclude that \(\scw(\mathcal{T}')\leq \scw(\mathcal{T})\).
\end{proof}

\begin{corollary}
Screewidth is invariant under subdivision and smoothing.
\end{corollary}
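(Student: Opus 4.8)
The plan is to exploit the fact that subdivision and smoothing are mutually inverse operations, so that the two one-directional inequalities already established in Lemmas \ref{lemma:subdivision} and \ref{lemma:smoothing} combine to force equality. No new tree-cut decomposition needs to be built; the work is entirely in observing that each lemma supplies exactly the inequality the other one is missing.

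First I would treat subdivision. Suppose \(G'\) is obtained from \(G\) by subdividing the edge \(vw\) with a new \(2\)-valent vertex \(u\). Lemma \ref{lemma:subdivision} immediately gives \(\scw(G') \leq \scw(G)\). For the reverse inequality, I would note that \(G\) is recovered from \(G'\) by smoothing the \(2\)-valent vertex \(u\); its neighbors \(v\) and \(w\) are distinct because \(G\) has no loops, so the smoothing operation is defined. Applying Lemma \ref{lemma:smoothing} with \(G'\) playing the role of the ``larger'' graph then yields \(\scw(G) \leq \scw(G')\). Combining the two bounds gives \(\scw(G') = \scw(G)\). The smoothing direction is handled symmetrically: if \(G'\) is obtained from \(G\) by smoothing a \(2\)-valent vertex \(u\) with distinct neighbors \(v,w\), then Lemma \ref{lemma:smoothing} gives \(\scw(G') \leq \scw(G)\), while subdividing the newly created edge \(vw\) of \(G'\) recovers \(G\), so Lemma \ref{lemma:subdivision} gives \(\scw(G) \leq \scw(G')\), and equality again follows.

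To state the corollary for full subdivisions and refinements rather than single operations, I would simply iterate: any subdivision of \(G\) is reached by a finite sequence of single-edge subdivisions, and any refinement by a finite sequence of single-vertex smoothings, and screewidth is preserved at each step by the equalities above. This gives that \(\scw\) is constant along the entire equivalence class of a graph under subdivision and smoothing.

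The only real obstacle is a mild bookkeeping check: one must confirm that the two operations are inverse \emph{on the nose}, i.e.\ that smoothing the vertex produced by subdividing \(vw\) returns exactly \(G\), with the correct multiplicity of parallel edges between \(v\) and \(w\) restored, and conversely. One must also verify that the distinct-neighbors hypothesis in the definition of smoothing is always met here, which is guaranteed by the standing no-loops convention (the subdivision vertex \(u\) has its two neighbors equal to the distinct endpoints of the original edge). Once this is confirmed, the corollary is immediate from the two preceding lemmas.
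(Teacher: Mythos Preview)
Your proposal is correct and follows essentially the same approach as the paper: both use that subdivision and smoothing are mutually inverse, so Lemma~\ref{lemma:subdivision} and Lemma~\ref{lemma:smoothing} each supply one inequality, forcing \(\scw(G)=\scw(G')\). Your version is simply more explicit about the bookkeeping (distinct neighbors, iteration to arbitrary subdivisions/refinements), but the core argument is identical.
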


\begin{proof}
If \(G\) can be obtained from \(G'\) by subdivision, then \(G'\) can be obtained from \(G\) by smoothing.  Lemma \ref{lemma:subdivision} tells us \(\scw(G')\leq \scw(G)\), and Lemma \ref{lemma:smoothing} tells us \(\scw(G)\leq \scw(G')\).  Thus \(\scw(G)=\scw(G')\).
\end{proof}

For our final result, we consider the behavior of screewidth under the deletion of a \emph{bridge}; that is, an edge that when deleted disconnects the graph.

\begin{proposition}\label{proposition:scw_bridge}
Let \(G\) be a graph with a bridge \(e\), such that the two components of \(G-e\) are \(G_1\) and \(G_2\).  Then \(\scw(G)=\max\{\scw(G_1),\scw(G_2)\}\).
\end{proposition}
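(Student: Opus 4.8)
The plan is to prove the two inequalities separately. For the lower bound $\scw(G) \geq \max\{\scw(G_1),\scw(G_2)\}$, I observe that each $G_i$ is a subgraph of $G$: it is obtained by deleting the bridge $e$ together with all the vertices of the other component. Proposition \ref{prop:subgraph} then immediately gives $\scw(G_i) \leq \scw(G)$ for $i=1,2$, and hence the maximum of the two is at most $\scw(G)$.

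For the upper bound $\scw(G) \leq \max\{\scw(G_1),\scw(G_2)\}$, I would build an explicit tree-cut decomposition of $G$ by gluing together optimal decompositions of the two pieces. Write $e = v_1 v_2$ with $v_1 \in V(G_1)$ and $v_2 \in V(G_2)$, and let $\mathcal{T}_i = (T_i, \mathcal{X}_i)$ be an optimal tree-cut decomposition of $G_i$, so that $w(\mathcal{T}_i) = \scw(G_i)$. Let $b_1$ be the node of $T_1$ whose bag contains $v_1$ and $b_2$ the node of $T_2$ whose bag contains $v_2$. Form a tree $T$ from the disjoint union of $T_1$ and $T_2$ by adding a single new link $l$ joining $b_1$ and $b_2$, keeping all bags unchanged. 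Since every vertex of $G$ lies in exactly one bag and the bags remain pairwise disjoint, this is a valid tree-cut decomposition $\mathcal{T} = (T, \mathcal{X})$ of $G$.

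The crux is to verify that $w(\mathcal{T}) = \max\{\scw(G_1),\scw(G_2)\}$. The key structural fact is that $e$ is the only edge of $G$ with endpoints in both $V(G_1)$ and $V(G_2)$, and that the entire tree $T_2$ hangs off $T_1$ at the single node $b_1$. I would then check adhesions link by link and node by node. For any link inherited from $T_1$ (resp. $T_2$), deleting it leaves $b_1$ and $b_2$—and therefore $v_1$ and $v_2$—on the same side, so $e$ does not cross it, and its adhesion equals its adhesion in $\mathcal{T}_1$ (resp. $\mathcal{T}_2$); for the new link $l$, deleting it separates $V(G_1)$ from $V(G_2)$, so $\adh(l) = \{e\}$ has size $1$. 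Hence $\lw(\mathcal{T}) = \max\{\lw(\mathcal{T}_1),\lw(\mathcal{T}_2),1\}$. The node computation is analogous, with one subtlety at the gluing nodes: deleting $b_1$ places $v_2$, via all of $T_2$, in a component distinct from the rest of $T_1$, yet $e \notin \adh(b_1)$ because its other endpoint $v_1$ lies in the deleted bag $X_{b_1}$, which belongs to no component of $T-b_1$; thus $\adh(b_1)$ coincides with the node adhesion of $b_1$ in $\mathcal{T}_1$, and likewise for every other node. This yields $\bw(\mathcal{T}) = \max\{\bw(\mathcal{T}_1),\bw(\mathcal{T}_2)\}$.

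Combining these gives $w(\mathcal{T}) = \max\{w(\mathcal{T}_1),w(\mathcal{T}_2),1\}$, and since each $G_i$ is nonempty we have $\scw(G_i) \geq 1$, so the stray $1$ is absorbed and $w(\mathcal{T}) = \max\{\scw(G_1),\scw(G_2)\}$, completing the upper bound. I expect the main obstacle to be the node-adhesion bookkeeping at $b_1$ and $b_2$: one must argue carefully, using the definition of node adhesion that counts only edges with both endpoints in bags of \emph{different} components, that the bridge does not inflate $\adh(b_1)$ or $\adh(b_2)$ even though its two endpoints become topologically separated after deleting the node. Everything else is routine verification that the gluing only introduces the edge $e$ across the single link $l$ and never merges components on opposite sides of the bridge.
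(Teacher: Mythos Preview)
Your proposal is correct and follows essentially the same approach as the paper: the lower bound via Proposition~\ref{prop:subgraph}, and the upper bound by gluing optimal decompositions of $G_1$ and $G_2$ along a new link between the bags containing the bridge's endpoints. Your treatment is in fact more careful than the paper's, which simply asserts that ``the link adhesions, node adhesions, and bag sizes are otherwise unchanged'' without spelling out why the bridge $e$ does not enter $\adh(b_1)$ or $\adh(b_2)$; your observation that one endpoint of $e$ lies in the deleted bag is exactly the right justification.
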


\begin{proof}
Since \(G_1\) and \(G_2\) are subgraphs of \(G\), we have \(\max\{\scw(G_1),\scw(G_2)\}\leq \scw(G)\) by Proposition \ref{prop:subgraph}.  For the other direction, let \(\mathcal{T}_1=(T_1,\mathcal{X}_1)\) and  \(\mathcal{T}_2=(T_2,\mathcal{X}_2)\) be optimal decompositions of \(G_1\) and \(G_2\), respectively. Let \(e=uv\) with \(u\in G_1\) and \(v\in G_2\), and let \(u\in B_{b}\in\mathcal{X}_1\) and \(v\in B_c\in\mathcal{X}_2\) where \(b\in V(T_1)\) and \(c\in V(T_2)\)  Consider \(\mathcal{T}=(T,\mathcal{X}_1\cup\mathcal{X}_2)\), where \(T\) consists of \(T_1\) and \(T_2\) with a new link \(l\) connecting \(b\) and \(c\).  The adhesion of \(l\) is \(1\), and the link adhesions, node adhesions, and bag sizes are otherwise unchanged.  Thus we have \(\lw(\mathcal{T})=\max\{\lw(\mathcal{T}_1),\lw(\mathcal{T}_2)\}\) and \(\bw(\mathcal{T})=\max\{\bw(\mathcal{T}_1),\bw(\mathcal{T}_2)\}\), implying that \(w(\mathcal{T})=\max\{w(\mathcal{T}_1),w(\mathcal{T}_2)\}=\max\{\scw(G_1),\scw(G_2)\}\).  This completes the proof.
\end{proof}

\section{Scramble number and screewidth}\label{section:sn_and_scw}

We now prove our main theorem connecting scramble number and screewidth: that \(\sn(G)\leq \scw(G)\) for any graph \(G\).

\begin{proof}[Proof of Theorem \ref{theorem:main}]
Let $G$ be a graph with a scramble $\cS$ such that $||\cS||=n$. Suppose for the sake of contradiction that there exists a tree-cut decomposition $\cT=(T,\mathcal{X})$ such that $w(\cT)< n$. Note that \(w(\cT)<||\mathcal{S}||\leq h(\mathcal{S})\leq |V(G)|\), so \(\cT\) is not the trivial tree-cut decomposition, and thus has at least one link. Let $l$ be a link in $T$. If we delete $l$, we split $T$ into two components. Let \(A\) be the union of the bags corresponding to the nodes of one component; and \(B\) the union of the bags corresponding to the nodes in the other component. Note that $A$ and $B$ partition $V(G)$

Suppose there exist eggs $E_1,E_2\in \cS$ such that $E_1\subseteq A$ and $E_2\subseteq B$. Then $\adh(l)$ is an egg cut of $\cS$ and therefore has at least $n$ elements. Thus $w(\cT)\geq \textrm{aw}(\cT)\geq \textrm{adh}(l)\geq n$ contradicting $w(\cT)<n$.

Now suppose that neither \(A\) nor \(B\) contains an egg from $\cS$. That implies that for every egg \(E_i\) in $\cS$, the graph \(G[E_i]\) has an edge in $\adh(l)$. Note that if we take one vertex from each edge in $\adh(l)$, we have a hitting set on $\cS$, which must be at least as big as $n$. Thus $w(\cT)\geq \lw(\cT) \geq \adh(l) \geq n$, again contradicting w$(\cT) < n$.

Thus for any link $l \in E(T)$, if we delete $l$ in $T$, $V(G)$ is partitioned into two subsets, one containing an egg and the other not. Thus we can orient every edge in $T$ to point to the component of $T$ whose bags contain an egg. Because trees have fewer edges than vertices, there exists a node $b\in V(T)$ that has outdegree zero under this orientation. Suppose there exists an egg $E_i\in \cS$ that has no vertices in $X_b$ and no edges in $\adh(b)$. Then we can delete one of the links adjacent to $b$ to define a partition on $V(G)$ where the part that does not contain the vertices in $X_b$ contains $E_i$.  Therefore, the link deleted is directed towards $E_i$, and $b$ has positive outdegree, a contradiction. Thus every egg in $\cS$ has either a vertex in $X_b$ or an edge in $\adh(b)$. This means we can choose a vertex from each edge in $\adh(b)$, which together with the set $X_b$ to yields hitting set for $\mathcal{S}$. We now see that $w(\cT)\geq \bw(\cT)\geq |X_v| + |\adh(v)|\geq n$ contradicting $w(\cT)<n$.

We conclude that there cannot exist a scramble $\cS$ and a tree-cut decomposition $\cT$ such that $||\cS||>w(\cT)$. This proves that $\sn(G)\leq \scw(G)$.
\end{proof}

\begin{example}
We now present an example of the usefulness of this result, first suggested to the authors by Ben Baily. Consider consider the graph illustrated in Figure \ref{figure:easier_sn_2_argument}, along with a tree-cut decomposition of width \(2\).  This graph appeared in \cite[Example 2.8]{echavarria2021scramble} as an example of a graph of scramble number \(2\) with an immersion minor of scramble number \(3\).  The argument presented there that the graph has scramble number equal to \(2\) involved a cumbersome argument that checked many cases.  Now armed with Theorem \ref{theorem:main}, and noting that a scramble consisting of (any) two disjoint eggs on the graph has order \(2\), we can quickly argue that \(2\leq \sn(G)\leq \scw(G)\leq 2\), and so \(\sn(G)=2\).
\end{example}

\begin{figure}[hbt]
    \centering
    \includegraphics{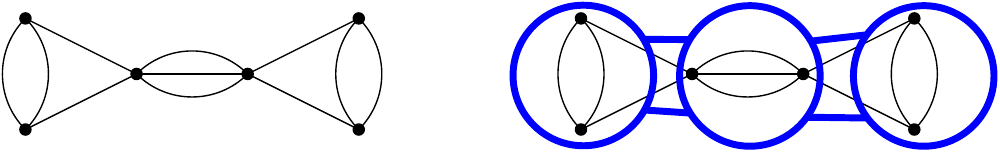}
    \caption{A graph of screewidth and scramble number \(2\).}
    \label{figure:easier_sn_2_argument}
\end{figure}

We can use the relationship between scramble number and screewidth to deepen our understanding of screewidth over the next several results.  

\begin{proposition}\label{proposition:tree_and_complete}
For any graph $G$, we have $\scw(G) = 1$ if and only if $G$ is a tree. For a simple graph \(G\) on \(n\) vertices, we have \(\scw(G)=n-1\) if and only if \(G=K_n\).
\end{proposition}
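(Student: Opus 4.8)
The plan is to prove the two biconditionals separately, in each case matching a scramble-number lower bound (via Theorem~\ref{theorem:main}) against an explicit construction or one of the earlier upper bounds. For the first statement, forward direction, suppose $G$ is a tree. I would exhibit the tree-cut decomposition $\cT=(T,\cX)$ with $T=G$ and each bag a distinct singleton $X_b=\{b\}$. Since $G$ is a tree, deleting any edge separates it into two components joined by no other edge, so every link adhesion equals $1$; and since adjacent vertices of $G$ correspond to adjacent nodes of $T$, no edge of $G$ runs strictly through any node, so every node adhesion is $0$ and $|X_b|+|\adh(b)|=1$. Hence $w(\cT)=1$ and $\scw(G)\leq 1$; as $V(G)\neq\varnothing$ forces some nonempty bag in any decomposition, $\scw(G)\geq 1$, giving equality.

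For the reverse direction of the first statement I would argue the contrapositive using Theorem~\ref{theorem:main}. If $G$ is not a tree it contains a cycle; choosing two distinct vertices on that cycle as singleton eggs yields a scramble $\cS$ with $h(\cS)=2$ and, because the two vertices are joined by two edge-disjoint arcs of the cycle, $e(\cS)\geq 2$. Thus $||\cS||=2$, so $\sn(G)\geq 2$ and $\scw(G)\geq\sn(G)\geq 2>1$.

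For the second statement, the upper bound $\scw(G)\leq n-1$ for all simple $G$ is already Proposition~\ref{prop:scw_n-1}. For the forward direction I would show $\scw(K_n)\geq n-1$ by taking the scramble $\cS$ whose eggs are all $n$ singletons: any egg-cut $(A,A^C)$ of $K_n$ cuts $|A|\,|A^C|\geq n-1$ edges, minimized by isolating a single vertex, so $e(\cS)=n-1$, while $h(\cS)=n$; thus $||\cS||=n-1$ and $\sn(K_n)\geq n-1$, and Theorem~\ref{theorem:main} yields $\scw(K_n)\geq n-1$, hence $\scw(K_n)=n-1$. For the reverse direction I would again use the contrapositive, now together with Lemma~\ref{lemma: scw_n-alpha}: if $G$ is simple with $G\neq K_n$, some two vertices are nonadjacent, so $\alpha(G)\geq 2$ and $\scw(G)\leq n-\alpha(G)\leq n-2<n-1$.

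None of these steps presents a serious obstacle. The only points requiring care are the adhesion bookkeeping in the singleton tree construction and confirming that the minimum egg-cut of $K_n$ is realized by isolating one vertex (that $|A|\,|A^C|$ is minimized at $|A|=1$). The unifying observation making everything clean is that both extreme values $\scw=1$ and $\scw=n-1$ can be pinned down exactly by sandwiching a scramble-number lower bound against an explicit decomposition or the independence-number bound of Lemma~\ref{lemma: scw_n-alpha}.
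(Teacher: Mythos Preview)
Your proposal is correct and follows essentially the same route as the paper: the singleton tree-cut decomposition for trees, the singleton scramble on $K_n$, Proposition~\ref{prop:scw_n-1} for the upper bound, and Lemma~\ref{lemma: scw_n-alpha} for the converse. The only cosmetic difference is that for the converse of the first claim the paper cites \cite[Corollary 4.2]{scramble} to conclude that $\sn(G)=1$ forces $G$ to be a tree, whereas you give a direct two-egg cycle argument; your version is more self-contained but logically the same.
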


\begin{proof}
If $G$ is a tree, then define a tree-cut decomposition $\mathcal{T} = (T, \mathcal{X})$ by $T = G$ and $X_v = \{v\}$ for each $v \in V(G)$. Then $\mathcal{T}$ has width 1, since every edge adhesion is of size 1, every node adhesion is empty, and every bag has size \(1\). Any tree-cut decomposition has positive width, so \(\scw(G)=1\). Conversely, if \(G\) is a graph with \(\scw(G)=1\), then we have \(1\leq \sn(G)\leq \scw(G)=1\), implying \(\sn(G)=1\).  By \cite[Corollary 4.2]{scramble}, \(G\) is a tree.

Now let \(G\) be a simple graph. If \(G\) is a complete graph, then \(\scw(G)\geq \sn(G)\geq n-1\), due to the scramble whose eggs are precisely the singleton sets \(\{v\}\) for \(v\in V(G)\). Since \(G\) is simple, by Proposition \ref{prop:scw_n-1} we have \(\scw(G)\leq n-1\), so \(\scw(G)=n-1\).  Conversely,  if \(\scw(G)=n-1\), then since \(\scw(G)\leq n-\alpha(G)\) by Lemma \ref{lemma: scw_n-alpha} we have \(\alpha(G)=1\), so \(G\) is a complete graph.
\end{proof}

\begin{lemma}\label{lemma:scw_not_minor_monotone}
Screewidth is not minor monotone.  That is, it is possible for a graph \(G\) to have a minor \(H\) with \(\scw(H)>\scw(G)\).
\end{lemma}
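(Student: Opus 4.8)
The plan is to establish non-monotonicity by exhibiting an explicit graph $G$ together with a minor $H$ satisfying $\scw(H) > \scw(G)$. First I would narrow down which minor operation can possibly be responsible, since this greatly constrains the search. Deleting edges or vertices produces a subgraph, which by Proposition \ref{prop:subgraph} cannot increase screewidth, and subdividing or smoothing leaves screewidth unchanged by Lemmas \ref{lemma:subdivision} and \ref{lemma:smoothing}. Hence any increase must come from edge contraction, so it suffices to produce a single graph $G$ and a set of edges whose contraction yields an $H$ with $\scw(H) > \scw(G)$.

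Before building the example I would record the constraints it must obey. Because treewidth is minor-monotone and $\tw \le \sn \le \scw$, any such pair satisfies $\scw(H) > \scw(G) \ge \tw(G) \ge \tw(H)$, so $H$ must be a graph whose screewidth strictly exceeds its treewidth. Moreover, a scramble on $H$ can typically be pulled back to $G$ by replacing each egg with the union of the branch sets it meets, and one expects this pullback to roughly preserve the order; so if $\scw(H)$ were certified by a scramble, I would expect that same scramble to force $\scw(G)$ just as high (this is exactly what happens for candidate minors like $K_n^l$, whose singleton scramble has order $n=\scw(K_n^l)$ and survives contraction). This pushes the construction into the regime $\sn(H) < \scw(H)$, where the obstruction realizing $\scw(H)$ is a genuine tree-cut phenomenon rather than a scramble. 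Concretely, I would start from an $H$ whose every tree-cut decomposition is forced to retain an over-large bag, exploiting the off-by-one between the tree-cut width (which counts bag size directly) and treewidth, as in the relation $\scw(K_n^l)=n$ versus $\tw(K_n^l)=n-1$. Then I would \emph{unfold} the heavy substructure forcing that bag into a longer, sparser gadget in $G$, chosen so that $G$ contracts back to $H$ while admitting a decomposition in which the edges that contraction concentrates are instead spread across many links.

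With a candidate pair in hand, the verification splits into two halves. The upper bound $\scw(G)\le k$ is the routine half: I would exhibit one explicit tree-cut decomposition of $G$ of width $k$ and read off its link and bag widths directly. The hard part will be the lower bound $\scw(H)\ge k+1$, which I expect to require an argument in the spirit of Proposition \ref{prop:scw_empty_bag}: one must rule out \emph{every} tree-cut decomposition of width $k$, arguing that certain vertices are forced into a common bag by their high-multiplicity connections, that the resulting bag is forced large, and that the tree linking the forced bags cannot avoid an over-width node or link. Crucially, this lower bound cannot invoke Theorem \ref{theorem:main}, precisely because the example is arranged so that $\sn(H) < \scw(H)$; so the main obstacle is carrying out this case analysis over all decompositions of $H$, rather than merely producing a single witnessing scramble.
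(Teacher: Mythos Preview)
Your plan contains a flawed heuristic that steers you toward an unnecessarily hard construction. You argue that a scramble on $H$ should pull back to $G$ with roughly the same order, and hence that any example must live in the regime $\sn(H)<\scw(H)$, forcing a delicate case analysis in place of Theorem~\ref{theorem:main}. But this pullback intuition fails: scramble number itself is not minor monotone \cite[Example~4.4]{scramble}. When an edge is contracted, egg-cuts in $H$ can be strictly larger than the corresponding cuts in $G$, so a high-order scramble on $H$ need not yield one on $G$. Your constraint $\sn(H)<\scw(H)$ is therefore not implied by $\scw(H)>\scw(G)$.

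The paper exploits exactly this. It reuses the very pair $(G,H)$ from \cite[Example~4.4]{scramble} that witnesses non-monotonicity of scramble number, and certifies \emph{both} screewidths via Theorem~\ref{theorem:main}: a scramble of order~$3$ and a width-$3$ decomposition on $G$, and a scramble of order~$4$ and a width-$4$ decomposition on $H$, giving $\scw(G)=3<4=\scw(H)$. No case analysis over all decompositions of $H$ is needed. Your approach could in principle succeed if you found a suitable pair with $\sn(H)<\scw(H)$, but you have not exhibited one, and the route you ruled out is in fact the easy one.
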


\begin{figure}[hbt]
    \centering
    \includegraphics[scale=.4]{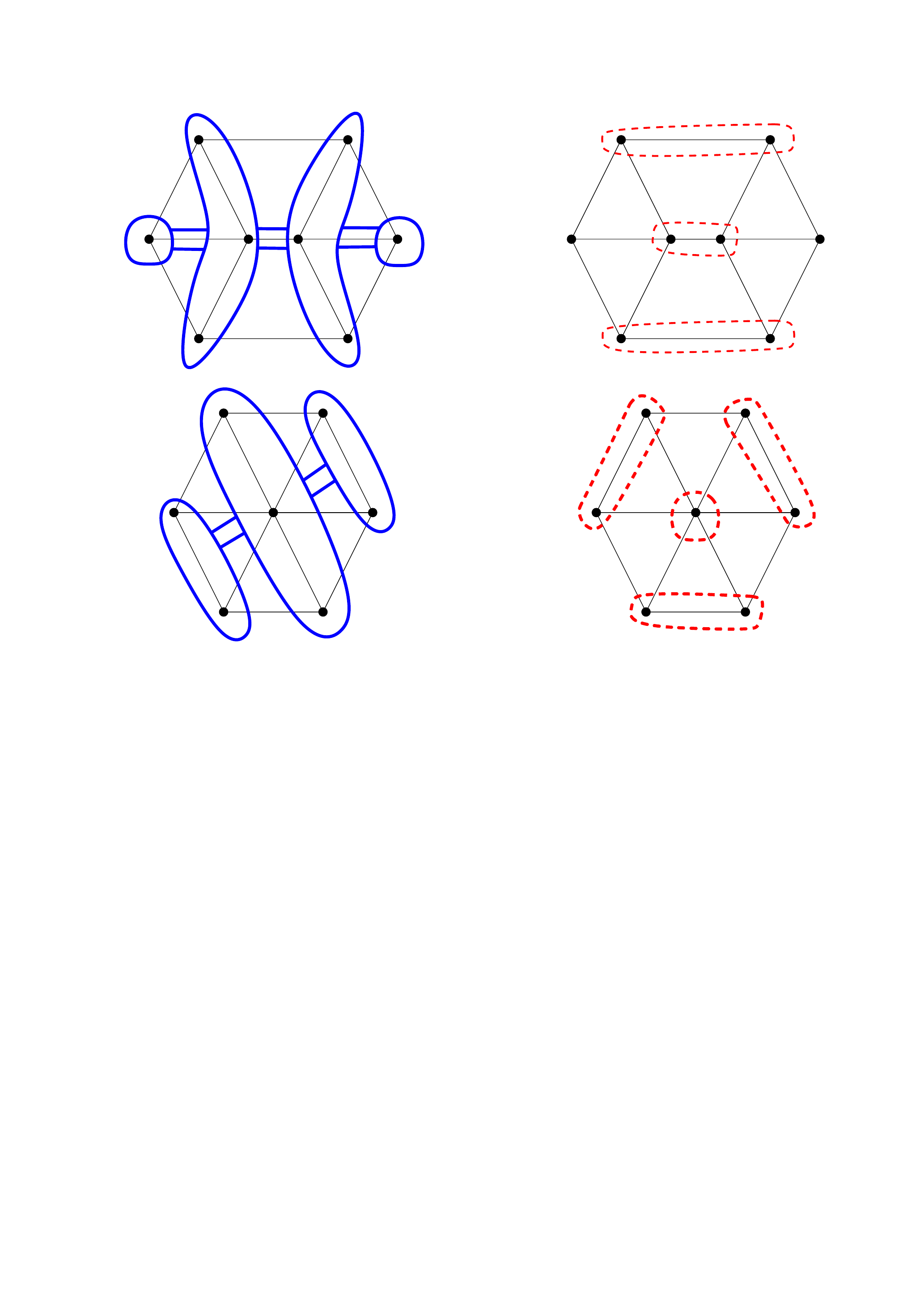}
    \caption{Graph demonstrating screewidth is not minor monotone.}
    \label{fig:minor_monotone}
\end{figure}

\begin{proof}
Consider the graph \(G\) pictured on the top of Figure \ref{fig:minor_monotone}, on the left with a tree-cut decomposition of width \(3\) and on the right with a scramble of order \(3\); and the minor \(H\) of \(G\) pictured below, on the left with a tree-cut decomposition of width \(4\) and on the right with a scramble of order \(4\).  It follows that \(\scw(G)=3\) and \(\scw(H)=4\), so screewidth is not minor monotone.
\end{proof}
We remark that these same graphs were used in \cite[Example 4.4]{scramble} to prove that scramble number is not minor monotone.
We now present a family of graphs for which we have \(\sn(G)=\scw(G)\).

\begin{lemma}
If $G$ is a simple graph on $n$ vertices and minimum valence $\delta(G) \geq \lfloor \frac{n}{2} \rfloor + 1$, then $\scw(G) = \sn(G) = n - \alpha(G)$. 
\end{lemma}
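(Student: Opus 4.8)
The plan is to sandwich $n - \alpha(G)$ between the upper bound we already have and a lower bound coming from a single well-chosen scramble. The upper bound is immediate: Theorem~\ref{theorem:main} gives $\sn(G) \le \scw(G)$, and Lemma~\ref{lemma: scw_n-alpha} gives $\scw(G) \le n - \alpha(G)$, so $\sn(G) \le \scw(G) \le n - \alpha(G)$. Thus it suffices to produce a scramble $\cS$ with $\|\cS\| \ge n - \alpha(G)$, since this forces all three quantities to coincide.

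For the scramble I would take the eggs to be exactly the edges of $G$: set $\cS = \{\{u,v\} : uv \in E(G)\}$. Each egg induces a connected subgraph of size $2$, so $\cS$ is a valid scramble (and it is nonempty, as $\delta(G)\ge 1$). A hitting set for $\cS$ is precisely a set of vertices meeting every edge, i.e.\ a vertex cover of $G$; since the complement of a maximum independent set is a minimum vertex cover, the minimum vertex cover has size $n - \alpha(G)$, and hence $h(\cS) = n - \alpha(G)$.

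The main work is bounding $e(\cS)$ from below by $n - \alpha(G)$. The key observation is that every egg has size $2$, so in any egg-cut $(A, A^C)$ both sides must contain a full edge and hence have at least two vertices; taking $A$ to be the smaller side gives $2 \le |A| \le \lfloor n/2 \rfloor$. I would then bound the crossing edges using the minimum-degree hypothesis: since $G$ is simple, each $v \in A$ sends at most $|A| - 1$ edges inside $A$, hence at least $\delta(G) - (|A| - 1)$ edges across, so $|E(A, A^C)| \ge |A|(\delta(G) - |A| + 1)$. Writing $a = |A|$, the function $f(a) = a(\delta(G) - a + 1)$ is concave, so on the interval $[2, \lfloor n/2 \rfloor]$ its minimum occurs at an endpoint; plugging in $\delta(G) \ge \lfloor n/2 \rfloor + 1$ shows that both $f(2) = 2(\delta(G)-1)$ and $f(\lfloor n/2 \rfloor)$ are at least $n - 1 \ge n - \alpha(G)$. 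Hence every egg-cut has size at least $n - \alpha(G)$, and when $n \le 3$ there is no egg-cut at all (two vertex-disjoint edges require $n\ge 4$), so $e(\cS)$ is vacuously large; either way $e(\cS) \ge n - \alpha(G)$.

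Combining the two estimates, $\|\cS\| = \min\{h(\cS), e(\cS)\} = n - \alpha(G)$, so $\sn(G) \ge n - \alpha(G)$, and the sandwich closes. I expect the one delicate step to be the egg-cut estimate: making the degree-counting and concavity argument watertight across the parity of $n$ and the boundary cases, in particular checking that the case $|A|=1$ is excluded precisely because every egg has size $2$ (this is exactly where a low-degree vertex could otherwise produce a cheap cut). The hitting-set computation, by contrast, is routine once one recognizes that hitting sets for $\cS$ are vertex covers.
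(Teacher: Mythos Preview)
Your argument is correct. The paper's own proof is a one-line citation: it invokes \cite{echavarria2021scramble} for the fact that $\sn(G)=n-\alpha(G)$ under the hypothesis $\delta(G)\ge\lfloor n/2\rfloor+1$, and then closes the sandwich exactly as you do via Theorem~\ref{theorem:main} and Lemma~\ref{lemma: scw_n-alpha}. You instead supply a self-contained proof of the lower bound by taking the edge scramble $\cS=\{\{u,v\}:uv\in E(G)\}$, identifying $h(\cS)$ with the vertex-cover number $n-\alpha(G)$, and bounding $e(\cS)$ via the degree-counting inequality $|E(A,A^C)|\ge a(\delta(G)-a+1)$ on the range $2\le a\le\lfloor n/2\rfloor$; the concavity check at both endpoints gives $2\lfloor n/2\rfloor\ge n-1\ge n-\alpha(G)$, and the observation that an egg-cut forces two vertex-disjoint edges is exactly what rules out $|A|=1$ and handles $n\le 3$. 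This is essentially how the cited reference establishes the result, so what you have written is not a new proof so much as an unpacking of the black box the paper chose to import. The trade-off is clear: the paper's version is two sentences, while yours makes the lemma independent of outside literature and transparent about where the minimum-degree hypothesis is actually used (namely, only in the egg-cut estimate).
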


\begin{proof}
It is shown in \cite{echavarria2021scramble} that $\sn(G) = n - \alpha(G)$ whenever $G$ is simple with $\delta(G) \geq \lfloor \frac{n}{2} \rfloor + 1$. Together with Theorem \ref{theorem:main} and Lemma \ref{lemma: scw_n-alpha}, we have $n - \alpha(G) = \sn(G) \leq \scw(G) \leq n - \alpha(G)$, giving the desired equality.
\end{proof}

This allows us to prove a result in computational complexity.  Consider the following problem.

\begin{itemize}
  \item[] \textsc{Screewidth}
  \item[] \textbf{Input:} A graph \(G=(V,E)\) and an integer \(k\).
  \item[] \textbf{Question:} Is \(\scw(G)\leq k\)?
\end{itemize}

\begin{corollary} The problem \textsc{Screewidth} is NP-complete.
\end{corollary}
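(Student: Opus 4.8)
The plan is to prove NP-completeness by establishing membership in NP and then hardness via a reduction from a known NP-complete decision problem. For membership, I would observe that a tree-cut decomposition $\cT=(T,\cX)$ is a polynomial-size certificate: one can verify in polynomial time that the bags partition $V(G)$, compute each link adhesion and each node adhesion by inspecting which bags lie in which components after a link or node deletion, and thereby confirm $w(\cT)\leq k$. By Corollary \ref{corollary:number_of_leaves} and Proposition \ref{prop:empty_bags_trivalent}, we may restrict attention to decompositions whose number of empty bags is bounded linearly in the number of nonempty bags, so the tree $T$ has size polynomial in $|V(G)|$; this guarantees the certificate is genuinely polynomial and places \textsc{Screewidth} in NP.

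For hardness, the natural strategy is to leverage the family of graphs from the preceding lemma, for which $\scw(G)=\sn(G)=n-\alpha(G)$. On that family, deciding whether $\scw(G)\leq k$ is equivalent to deciding whether $n-\alpha(G)\leq k$, i.e.\ whether $\alpha(G)\geq n-k$. Since \textsc{Independent Set} (equivalently, deciding $\alpha(G)\geq j$) is a classical NP-complete problem, I would reduce from it. The key obstacle is that \textsc{Independent Set} is NP-complete on general simple graphs, not on the dense graphs satisfying $\delta(G)\geq \lfloor n/2\rfloor + 1$; so I cannot directly feed an arbitrary instance into the lemma's formula. The main work of the reduction is therefore a \emph{gadget} or \emph{padding} step that transforms an arbitrary simple graph $G$ into a graph $G'$ with high minimum degree while controlling the independence number in a predictable, polynomial-time-computable way.

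The cleanest padding I would attempt is to take the join of $G$ with a large clique, or more carefully, to add a set of new vertices each adjacent to everything, raising all degrees above the $\lfloor n'/2\rfloor+1$ threshold for the new vertex count $n'$; such universal vertices cannot belong to any independent set of size $\geq 2$, so $\alpha(G')=\max\{\alpha(G),1\}=\alpha(G)$ whenever $G$ has an edge, keeping the independence number unchanged while forcing the degree condition. I would verify that the number of added vertices needed is polynomial and that the resulting $G'$ still satisfies $\delta(G')\geq\lfloor n'/2\rfloor+1$. Then $\scw(G')=n'-\alpha(G')=n'-\alpha(G)$, so the question ``$\scw(G')\leq n'-j$?'' is exactly ``$\alpha(G)\geq j$?''. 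Setting $k=n'-j$ completes the reduction from \textsc{Independent Set} to \textsc{Screewidth}.

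The step I expect to be the genuine obstacle is making the padding both correct and degree-feasible simultaneously: adding universal vertices raises $n'$, which in turn raises the threshold $\lfloor n'/2\rfloor+1$, so one must check that enough universal vertices push \emph{every} vertex's degree past the moving target. For a vertex $v$ of low original degree, its degree in $G'$ is $\deg_G(v)+(\text{number of added universal vertices})$; I would choose the number of padding vertices to be on the order of $n$ (say, equal to $n$ or slightly more) and confirm algebraically that this suffices for all vertices, including the padding vertices themselves, to exceed $\lfloor n'/2\rfloor+1$. Once this inequality is pinned down and the invariance $\alpha(G')=\alpha(G)$ is confirmed, the reduction is polynomial and correct, and combined with NP membership this yields NP-completeness of \textsc{Screewidth}.
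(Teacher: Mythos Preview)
Your proposal is correct and follows essentially the same approach as the paper: the paper also reduces from \textsc{Independent Set} by adjoining exactly $n=|V(G)|$ universal vertices to a simple connected graph $G$, yielding $\hat{G}$ on $2n$ vertices with $\delta(\hat{G})\geq n+1=\lfloor 2n/2\rfloor+1$ and $\alpha(\hat{G})=\alpha(G)$, so that $\scw(\hat{G})=2n-\alpha(G)$. Your extra care in bounding the certificate size via Proposition~\ref{prop:empty_bags_trivalent} and Corollary~\ref{corollary:number_of_leaves} is a nice touch that the paper leaves implicit.
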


\begin{proof} First we show that \textsc{Screewidth} is in NP.  As a ``yes'' certificate, take a tree-cut decomposition \(\mathcal{T}\) of \(G\) with width at most \(k\).  Computing link adhesions, node adhesions, and bag sizes can be accomplished in polynomial time, so this is indeed a polynomial time certificate.

To prove \textsc{Screewidth}  is NP-hard, we follow the construction from \cite[\S 3]{echavarria2021scramble}.  Given a simple connected graph \(G\), build \(\hat{G}\) by adding \(|V(G)|\) vertices and connecting the new vertices to all \(2|V(G)|-1\) other vertices in the new graph.  Note that \(\delta(\hat{G})\geq \left\lfloor\frac{|V(\hat{G})|}{2}\right\rfloor+1\), so \(\scw(\hat{G})=2n-\alpha(\hat{G})\).  By the proof of  \cite[Lemma 3.5]{echavarria2021scramble}, we know \(\alpha(\hat{G})=\alpha(G)\), so we have \(\scw(\hat{G})=2n-\alpha({G})\).  Thus an efficient algorithm for upper bounding \(\scw(\hat{G})\) would provide an efficient algorithm for lower bounding \(\alpha(G)\) with only polynomial increase in the size of the input graph.  Since it is NP-hard to lower bound \(\alpha(G)\), we conclude that it is NP-hard to compute screewidth.
\end{proof}

Since screewidth is invariant under subdivisions, the problem \textsc{Screewidth} is NP-hard even when restricting to bipartite graphs, or to graphs with arbitrarily large (fixed) girth.  We remark that since \(\tw(G)\leq \sn(G)\leq \scw(G)\), any problem that can be solved in polynomial time for graphs of bounded treewidth can also be solved in polynomial time for graphs of bounded screewidth.  It would be interesting to investigate whether there are any problems that can be solved for graphs of bounded screewidth that are thought to be hard for graphs of bounded treewidth.

There do exist graphs $G$ such that $\sn(G) < \scw(G)$. As a first simple example, consider the graph $K_3 \circ B_{2,3}$, a triangle with each of its vertices connected to an additional vertex by three parallel edges (often called a \emph{banana}) as illustrated in Figure \ref{fig:scw_neq_sn}.

\begin{figure}[hbt]
    \centering
    \includegraphics[scale=.5]{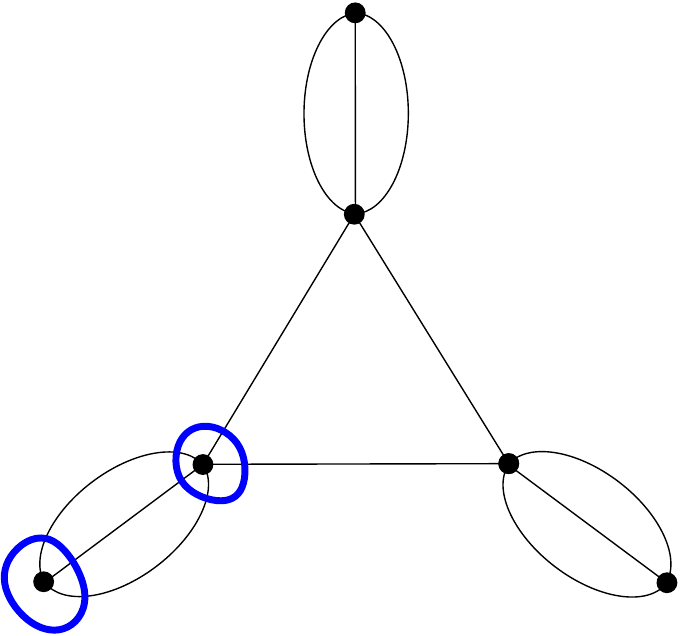}\quad\quad
    \includegraphics[scale=.5]{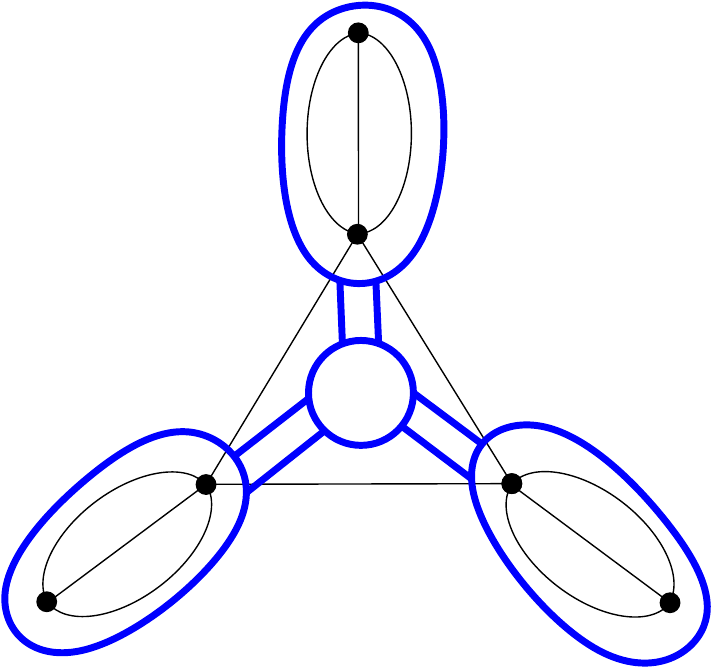}
    \caption{A graph with $\sn(G) < \scw(G)$.}
    \label{fig:scw_neq_sn}
\end{figure}

\begin{proposition}
We have \(\sn(K_3 \circ B_{2,3})=2\) and \(\scw(K_3 \circ B_{2,3})=3\).
\end{proposition}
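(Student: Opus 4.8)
The plan is to prove the two equalities by establishing four separate bounds: $\sn(G)\ge 2$, $\sn(G)\le 2$, $\scw(G)\le 3$, and $\scw(G)\ge 3$. Throughout I write $a_1,a_2,a_3$ for the vertices of the central triangle and $b_1,b_2,b_3$ for the banana vertices, with $a_i$ joined to $b_i$ by three parallel edges, and I set $C_i=\{a_i,b_i\}$. A preliminary observation I would record is that $\lambda(G)=2$: deleting the two triangle edges incident to any $a_i$ isolates $C_i$, while no single edge disconnects $G$.

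For the lower bound on scramble number I would take the scramble with two disjoint singleton eggs, say $\{b_1\}$ and $\{b_2\}$. Disjointness forces $h(\cS)\ge 2$, and since $\lambda(G)=2$ every egg-cut uses at least two edges, so $e(\cS)\ge 2$; hence $\|\cS\|=2$ and $\sn(G)\ge 2$. For the upper bound on screewidth I would exhibit the tree-cut decomposition of Figure~\ref{fig:scw_neq_sn}, whose tree is the path with bags $C_1-C_2-C_3$. Each parallel bundle is internal to a bag, each link carries only the two triangle edges crossing it, and the middle node $C_2$ has node adhesion $\{a_1a_3\}$, giving bag width $|C_2|+1=3$; all other contributions are at most $2$, so the width is $3$ and $\scw(G)\le 3$.

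The lower bound $\scw(G)\ge 3$ is where the interesting structure appears, and I would argue it directly. Suppose some decomposition $\cT$ had width at most $2$. Then every bag has at most two vertices and every link adhesion is at most $2$; in particular, were $a_i$ and $b_i$ in different bags, a link on the path between their nodes would carry all three parallel $a_ib_i$ edges, forcing adhesion $\ge 3$. Hence each $C_i$ lies in a single bag, and since bags have size at most $2$ these are exactly three distinct, full bags $C_1,C_2,C_3$ at nodes $n_1,n_2,n_3$. I would then consider the median node $m$ of $n_1,n_2,n_3$ in $T$, i.e.\ the node lying on all three pairwise paths. If $m\notin\{n_1,n_2,n_3\}$, then $m$ has valence $\ge 3$ and deleting it separates the three clusters, so all three triangle edges lie in $\adh(m)$ and $|\adh(m)|\ge 3$. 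If instead $m=n_i$, then the triangle edge joining the other two clusters passes through $n_i$, giving $|X_{n_i}|+|\adh(n_i)|\ge 2+1=3$. Either way the width exceeds $2$, a contradiction, so $\scw(G)=3$.

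The main obstacle is the upper bound $\sn(G)\le 2$, since Theorem~\ref{theorem:main} only yields $\sn(G)\le\scw(G)=3$ and must be bypassed by a direct argument. I would assume a scramble $\cS$ with $h(\cS)\ge 3$ and $e(\cS)\ge 3$ and derive a contradiction by forcing $h(\cS)\le 2$. The three cuts $E(C_i,C_i^C)$ each have size $2$, hence are not egg-cuts (as $e(\cS)\ge 3$), so for each $i$ we cannot simultaneously have an egg inside $C_i$ and an egg inside $C_i^C$. If some egg lies inside a cluster $C_i$, this forces every egg to meet $C_i$, making $C_i$ a hitting set of size $2$. Otherwise no egg lies within a single cluster; since the only inter-cluster edges are triangle edges, any connected egg spanning two clusters must contain at least two of $a_1,a_2,a_3$, and therefore meets $\{a_1,a_2\}$, again a hitting set of size $2$. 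In both cases $h(\cS)\le 2$, contradicting $h(\cS)\ge 3$, so $\sn(G)=2$.
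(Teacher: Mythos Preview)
Your proof is correct and largely parallels the paper's argument, with one genuine methodological difference and one small inaccuracy worth noting. The inaccuracy: the decomposition in Figure~\ref{fig:scw_neq_sn} is actually the star $K_{1,3}$ with an empty central bag, not the path $C_1\!-\!C_2\!-\!C_3$ you describe. Your path decomposition is nonetheless valid and has width~$3$, so the bound $\scw(G)\le 3$ goes through unchanged; just be careful when citing figures.

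The substantive difference is your lower bound $\scw(G)\ge 3$. The paper first invokes Proposition~\ref{prop:empty_bags_trivalent} to assume all empty bags are trivalent, then does a short case analysis: with no empty bags $T$ is a path (bag width $3$), and with an empty bag $T$ must be the star on four nodes (node adhesion $3$). Your median argument sidesteps the normalization step entirely: once the three clusters occupy three distinct nodes, the Steiner median $m$ of those nodes is either one of the $n_i$ (forcing $|X_{n_i}|+|\adh(n_i)|\ge 2+1$) or a separate node of valence~$\ge 3$ (forcing $|\adh(m)|\ge 3$). This is cleaner and more portable, since it does not rely on any preprocessing of the decomposition. The remaining pieces---the scramble-number bounds---match the paper's reasoning essentially verbatim, with only a cosmetic change in the scramble witnessing $\sn(G)\ge 2$.
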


\begin{proof}

The decomposition pictured on the right in Figure \ref{fig:scw_neq_sn} shows \[\scw(K_3 \circ B_{2,3})\leq 3.\] Suppose for the sake of contradiction that \(K_3 \circ B_{2,3}\) has a tree-cut decomposition \(\mathcal{T}=(T,\mathcal{X})\) with width  \(2\). Without loss of generality, \(\mathcal{T}\) has no empty bags with valence smaller than \(3\). Three of the bags must be the pairs of vertices in each banana:  putting the vertices in separate bags would give a link of adhesion at least \(3\), and having any more vertices would increase the bag-width to at least \(3\).  If there are no empty bags, then \(T\) is a path on three bags, giving a bag-width of \(3\).  Thus there must be an empty bag, whose valence is at least \(3\), implying that there must be at least three leaves in \(T\). Since only a nonempty bag can be a leaf, there are exactly three leaves, and \(T\) must be the star graph on four vertices.  It follows that the decomposition must in fact be the one pictured in Figure \ref{fig:scw_neq_sn}, which as noted has width \(3\), a contradiction.  Thus \(\scw(K_3 \circ B_{2,3})= 3\).

It remains to show that \(\sn(K_3 \circ B_{2,3})=2\).
The scramble illustrated on the left in Figure \ref{fig:scw_neq_sn} has \(h(\mathcal{S})=2\) and \(e(\mathcal{S})=3\), for a scramble of order \(2\).  Thus \(\sn(K_3 \circ B_{2,3})\geq 2\).  Suppose for the sake of contradiction that \(K_3 \circ B_{2,3}\) has a scramble \(\mathcal{S}\) with \(||\mathcal{S}||\geq 3\).  Since we have \(e(\mathcal{S})\geq 3\), and we can disconnect any banana from the graph by deleting two edges, it is impossible to have both an egg in a banana and an egg in the complement of that banana.  If no egg is contained in any banana, then every egg contains two of the vertices from the central triangle, meaning that there is a hitting set of size \(2\), a contradiction.  Thus some egg must be contained in a banana, and every other egg must intersect that same banana.  This means that the \(2\) vertices of the banana form a hitting set, another contradiction.  We conclude that \(\sn(K_3 \circ B_{2,3})=2\), completing the proof.
\end{proof}

The next sequence of results shows that screewidth can grow quadratically in scramble number.  We start with several lemmas concerning trees.  A \emph{leaf-to-leaf geodesic} in a tree is a path connecting two distinct leaves.  A \emph{forest} is a graph all of whose connected components are trees.

\begin{lemma}
Let \(T\) be a tree with \(n\geq 4\) leaves and no \(2\)-valent nodes. There exists a node \(v\) such that \(T-v\) consists of forests, each including at most \(\lfloor n/2\rfloor \) of the original leaves.  We call any such node a \emph{leaf-centroid} of \(T\).
\end{lemma}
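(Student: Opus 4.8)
The plan is to run a centroid-type argument adapted to count leaves rather than vertices. For each node $v\in V(T)$, let $\ell(v)$ denote the maximum number of original leaves of $T$ contained in any single component (forest) of $T-v$. I would choose $v$ to minimize $\ell(v)$ over all nodes, and then argue that this minimizer is a leaf-centroid, i.e. that $\ell(v)\leq\lfloor n/2\rfloor$. First I would observe that $T$ has at least one internal node (otherwise $T=K_2$ has only two leaves, contradicting $n\geq 4$), and that any internal node $x$ has $\val(x)\geq 3$ by the no-$2$-valent hypothesis, so the at least three components of $T-x$ partition the $n$ leaves, each component containing at least one original leaf; hence each has at most $n-2$ of them. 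In particular the minimizer $v$ is internal and is not itself a leaf (a leaf $z$ has $\ell(z)=n-1>n-2$), so the components of $T-v$ partition all $n$ original leaves.

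Next, suppose for contradiction that $\ell(v)=k>\lfloor n/2\rfloor$. Since the components of $T-v$ partition the $n$ leaves, at most one component can contain more than $\lfloor n/2\rfloor$ of them; call this unique heavy component $C$, so $C$ contains $k$ leaves and every other component contains at most $n-k<k$ leaves. Let $u$ be the unique neighbor of $v$ lying in $C$. The aim is to show $\ell(u)<\ell(v)=k$, contradicting minimality.

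To do this I would examine the components of $T-u$. One of them, the ``$v$-side,'' consists of $v$ together with all components of $T-v$ other than $C$; since $v$ is not a leaf, this side contains exactly $n-k$ original leaves, and $n-k<k$ because $k>\lfloor n/2\rfloor$. The remaining components of $T-u$ are the subtrees of $C$ hanging off $u$, which together contain all $k$ leaves of $C$ (note $u$ is not a leaf, as $\val(u)\geq 3$). The crucial point is that there are at least two such subtrees, since $u$ is internal and $\val(u)\geq 3$ gives $u$ at least two neighbors inside $C$. I would then verify that each such subtree carries at least one original leaf: a subtree $S$ hanging off $u$ is itself a tree, and any leaf of $S$ other than the single vertex adjacent to $u$ has the same valence in $T$ as in $S$ and is therefore an original leaf, while if $S$ is a single vertex that vertex is already a leaf of $T$. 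Consequently the $k$ leaves of $C$ are split among at least two nonempty parts, so each part contains at most $k-1<k$ leaves.

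Combining these observations, every component of $T-u$ contains fewer than $k$ original leaves, so $\ell(u)<k=\ell(v)$, contradicting the minimality of $\ell(v)$. Hence $\ell(v)\leq\lfloor n/2\rfloor$ and $v$ is a leaf-centroid. I expect the main obstacle to be the bookkeeping in the last step: confirming both that the no-$2$-valent assumption forces at least two subtrees of $C$ at $u$ (this is exactly where the hypothesis is used, since a $2$-valent $u$ would allow a single subtree carrying all $k$ leaves and stall the strict decrease), and that each of those subtrees carries at least one original leaf, so that the maximum genuinely drops below $k$.
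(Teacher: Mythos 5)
Your proof is correct and is essentially the paper's argument recast in extremal rather than iterative form: the paper walks from an arbitrary node toward the heavy component, showing the maximum leaf count of a component strictly decreases (via the no-$2$-valent hypothesis) and never backtracks (since the far side holds fewer than $n/2$ leaves), while you take a minimizer of $\ell(v)$ and derive a contradiction from exactly those two facts. The one small wrinkle is your parenthetical justifying that $u$ is not a leaf ``as $\val(u)\geq 3$,'' which is stated backwards --- $u$ is not a leaf because its component $C$ contains $k\geq 3$ original leaves of $T$ (a single-vertex component carries at most one), and only then does the hypothesis give $\val(u)\geq 3$.
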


\begin{proof}
  Let \(v\in V(T)\); if \(v\) is a leaf-centroid of \(T\), we are done.  If not, there is a component \(C\) of \(T-v\) with more than \(\lfloor n/2\rfloor\) of the tree's leaves in it.  There is a unique vertex of \(C\) incident to \(v\), call this \(v'\).  Replace \(v\) with \(v'\).  If \(v'\) is a leaf-centroid, we are done; if not, repeat.  We claim this process will eventually lead us to a leaf-centroid of \(T\).

First, moving from \(v\) to \(v'\) cannot connect any vertices in \(T-C\) to vertices in \(C\), and in \(T-C\) there are strictly fewer than \(n/2\) leaves of \(T\); this means that we will not double-back from \(v'\) to \(v\), and will instead go to a new vertex.  Second, since \(T\) has no \(2\)-valent nodes, moving from \(v\) to \(v'\) disconnects some leaves from \(C\), meaning that the largest number of leaves in a component must strictly decrease. This process cannot continue forever, as eventually we will get down to at most \(\lfloor n/2\rfloor\) leaves in any component. 
\end{proof}

\begin{corollary}\label{corollary:min_geodesics}
Let \(T\) be a tree with \(n\geq 4\) leaves and all other vertices trivalent.  There exists a node \(v\in T\) such that at least \[\left\lceil\frac{1}{4}n(n+2)\right\rceil-1\] leaf-to-leaf geodesics pass through \(v\).  Moreover, for every \(n\), there exists a tree with \(n\) leaves, namely the cubic caterpillar graph, such that this is the largest number of leaf-to-leaf geodesics through any node \(v\in T\).
\end{corollary}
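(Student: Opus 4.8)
The plan is to apply the preceding (leaf-centroid) lemma to extract a node $v$, translate ``number of leaf-to-leaf geodesics through $v$'' into a purely numerical quantity depending only on how the $n$ leaves are distributed among the components of $T-v$, and then solve a small optimization problem. The key observation is that $v$ lies in the interior of the geodesic between two leaves precisely when those leaves lie in different components of $T-v$. Hence, if the components of $T-v$ contain $a_1,a_2,a_3,\ldots$ of the original leaves, the number of geodesics through $v$ is
\[
\binom{n}{2}-\sum_i\binom{a_i}{2},
\]
the total number of leaf pairs minus the pairs lying in a common component.

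First I would check that a leaf-centroid $v$ is genuinely trivalent rather than a leaf: removing a leaf leaves a single component containing $n-1$ original leaves, which exceeds $\lfloor n/2\rfloor$ for $n\ge 4$, so $v$ is an internal (hence $3$-valent) node and $T-v$ has exactly three components. I would also verify that each component contains at least one original leaf (its attaching vertex is either a leaf of $T$, or an internal vertex whose component is then a tree with at least two leaves of $T$), so that the component leaf-counts satisfy $1\le a_i\le\lfloor n/2\rfloor$ and $a_1+a_2+a_3=n$. This lower bound $a_i\ge 1$ is essential, as dropping it would weaken the bound. To lower-bound the displayed count it now suffices to \emph{maximize} $\sum_i\binom{a_i}{2}$ over this feasible set; since $\binom{x}{2}$ is convex, a short exchange argument (repeatedly raising the largest coordinate and lowering the smallest) shows the maximum occurs at the most spread configuration the cap allows, namely $(\lfloor n/2\rfloor,\,\lceil n/2\rceil-1,\,1)$. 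Substituting this and splitting into the cases $n=2m$ and $n=2m+1$ yields exactly $\lceil\tfrac14 n(n+2)\rceil-1$ in both parities, proving the existence statement.

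For tightness I would exhibit the cubic caterpillar with $n$ leaves, which is $CCG_{2n-4}$ in the notation of Section~2: its spine is a path $s_1-\cdots-s_n$ with a pendant leaf on each internal spine vertex. Removing an internal node $s_i$ splits the $n$ leaves as $(i-1,\,n-i,\,1)$, so by the same formula the number of geodesics through $s_i$ equals $\binom{n}{2}-\binom{i-1}{2}-\binom{n-i}{2}$. This is maximized by balancing $i-1$ and $n-i$, i.e.\ taking $i$ as close to $(n+1)/2$ as possible; the resulting maximum again equals $\lceil\tfrac14 n(n+2)\rceil-1$ in each parity. Since every leaf and every spine endpoint has $0$ geodesics passing through its interior, this is the largest number of leaf-to-leaf geodesics through any node of the caterpillar, establishing the ``moreover'' claim.

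The routine part is the two case computations; the main obstacle is the floor/ceiling bookkeeping, which must be handled consistently for even and odd $n$ so that both the existence bound and the caterpillar count land on the same closed-form value. The other point requiring care is the pair of structural facts that a leaf-centroid must be trivalent and that each of its three components carries at least one original leaf --- it is exactly the constraint $a_i\ge 1$ that forces the tight extremal configuration and hence the stated formula.
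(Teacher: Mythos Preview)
Your argument is correct and follows essentially the same route as the paper: apply the leaf-centroid lemma, count leaf-to-leaf geodesics through $v$ in terms of the leaf-partition $(a_1,a_2,a_3)$, optimize, and then check the caterpillar realizes the bound. Your formula $\binom{n}{2}-\sum_i\binom{a_i}{2}$ is identically equal to the paper's $\sum_{i<j}a_ia_j$ via the relation $2\sum_{i<j}a_ia_j=n^2-\sum_i a_i^2$, so the two counts agree. The only genuine methodological difference is in the optimization step: the paper passes to real variables and argues geometrically that the minimum of $xy+yz+zx$ on the hexagon $\{x+y+z=n,\ 1\le x,y,z\le \lfloor n/2\rfloor\}$ is attained at a vertex, while you stay in the integers and use convexity of $\binom{x}{2}$ with an exchange argument. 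Both land on the same extremal configuration $(\lfloor n/2\rfloor,\lceil n/2\rceil-1,1)$. Your explicit verification that a leaf-centroid cannot itself be a leaf, and that each of the three components carries at least one original leaf, is a point the paper uses (it imposes $1\le L_i$) but does not justify, so your write-up is a bit more careful there.

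One small correction: the cubic caterpillar with $n$ leaves has $2n-2$ vertices (spine of $n$ vertices plus $n-2$ pendants), so in the paper's notation it is $CCG_{2n-2}$, not $CCG_{2n-4}$. This does not affect your argument, since your explicit description of the spine and pendants is the correct tree.
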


\begin{proof}
Let \(v\) be a leaf-centroid of \(T\).  Since \(T\) has all non-leaves trivalent, \(T-v\) consists of three components, none of which have more than \(\lfloor n/2\rfloor\) leaves.  Thus there exist \(3\) disjoint sets of leaves, say with \(L_1\), \(L_2\), and \(L_3\) leaves, such that all geodesics connecting leaves in different sets pass through \(v\).  It follows that there are \(L_1L_2+L_2L_3+L_3L_1\) leaf-to-leaf geodesics passing through \(v\).

Assume for the moment that \(n\) is even.  Then we wish to minimize \(L_1L_2+L_2L_3+L_3L_1\) subject to the constraints  \(1\leq L_i\leq n/2\) for all \(i\), and \(L_1+L_2+L_3=n\). Replace for the moment \(L_1\), \(L_2\), and \(L_3\) with the continuous variables \(x\), \(y\), and \(z\).  We then wish to minimize \(xy+yz+zx\) subject to the constraint \(x+y+z=n\) and \(1\leq x,y,z\leq n/2\).  Since \(2(xy+yz+zx)=n^2-x^2-y^2-z^2\), this is the same as maximizing the distance from the origin to our region in \(xyz\)-space. This will occur at one of the geometric vertices of our region, which is in general a hexagon on the plane \(x+y+z=n\).  The vertices are the six permutations of \((n/2,n/2-1,1)\). Thus the function \(xy+yz+zx\) is minimized at an integral point, with minimum \(\frac{1}{4}n(n+2)-1\).  Therefore the leaf centroid has at least \(\frac{1}{4}n(n+2)-1\) leaf-to-leaf geodesics passing through it.

The analysis for \(n\) odd is similar, except the vertices are permutations of \(((n-1)/2,(n-1)/2,1)\).  This yields a minimum of $\frac{1}{4}n(n+2)-\frac{3}{4}$, which is equal to to \(\left\lceil\frac{1}{4}n(n+2)-1\right\rceil\).  Thus the leaf centroid has at least \(\left\lceil\frac{1}{4}n(n+2)\right\rceil-1\) leaf-to-leaf geodesics passing through it.

Now consider the cubic caterpillar tree with \(n\) leaves.  We refer to the non-leaf nodes as \(v_2,\ldots,v_{n-1}\), arranged in a path.   Note that deleting \(v_k\) yields three collections of vertices with sizes \(1\), \(k-1\), and \(n-k\).  It follows that the number of leaf-to-leaf geodesics passing through \(v_k\) is equal to
\[(k-1)(n-k)+(n-k)+(k-1)=(k-1)(n-k)-(n-1).\]
Treating \(k\) as a continuous variable, this expression is concave down in \(k\), and has first derivative in \(k\) vanishing when \(k=(n+1)/2\).  Thus when \(n\) is odd, the integer \(k\) maximizing this expression is \(k=(n+1)/2\), yielding an expression of
\[(k-1)(n-k)-(n-1)=\frac{1}{4}n(n+2)-\frac{3}{4}.\]
When \(n\) is even, the integer \(k\) maximizing this expression is either \(k=n/2\) or \(k=n/2+1\).  Both of these yield
\[(k-1)(n-k)-(n-1)=\frac{1}{4}n(n+2)-1.\]
In both the even and the odd cases, the formulas match our claim.  Thus every node in a cubic caterpillar graph with \(n\) leaves has at most \(\left\lceil\frac{1}{4}n(n+2)\right\rceil-1\) leaf-to-leaf geodesics passing through it.
\end{proof}

Our next theorem shows that screewidth can grow quadratically in scramble number.
\begin{theorem}\label{theorem:scw_quadratic_sn}
For \(n\geq 4\), there exists a graph \(G\) with \(\sn(G)=n\) and \(\scw(G)=\left\lceil\frac{1}{4}n(n+2)\right\rceil-1\).
\end{theorem}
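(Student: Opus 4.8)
The plan is to produce an explicit family of graphs $G = G_n$ for which I compute $\sn$ by hand and read off $\scw$ directly from Corollary \ref{corollary:min_geodesics}. The natural template is a graph whose only reasonable tree-cut decompositions are ``caterpillar-shaped'': take $n$ gadget clusters $C_1, \dots, C_n$, each internally joined by many ($\gg n$) parallel edges so that, exactly as in the proof of Proposition \ref{prop:scw_empty_bag}, no tree-cut decomposition of width below the target may split a cluster between two bags. Thus each cluster occupies a single bag, and the clusters are forced to the leaves of the underlying tree. The inter-cluster edges are then chosen sparsely, so that once the clusters sit at the leaves of a tree, each inter-cluster edge contributes to the adhesion of precisely the nodes on the geodesic between its two leaves.

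Granting such a construction, the screewidth falls out of Corollary \ref{corollary:min_geodesics}. For the upper bound I would exhibit the cubic caterpillar decomposition: the clusters occupy the leaf bags, the internal nodes carry empty bags, and the worst internal node sees exactly $\lceil \frac14 n(n+2)\rceil - 1$ inter-cluster edges, the link adhesions and bag sizes being no larger. For the lower bound I would first reduce, via Proposition \ref{prop:empty_bags_trivalent} and Corollary \ref{corollary:number_of_leaves}, to a tree-cut decomposition whose tree is trivalent with the $n$ clusters at its leaves; then Corollary \ref{corollary:min_geodesics} guarantees a node through which at least $\lceil \frac14 n(n+2)\rceil - 1$ leaf-to-leaf geodesics, and hence that many inter-cluster edges, pass, forcing the node adhesion and thus the width up to the target.

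For the scramble number I would argue the two inequalities separately. The inequality $\sn(G_n) \ge n$ is witnessed by an explicit scramble of order $n$; the inequality $\sn(G_n) \le n$ is the more delicate half, and I would establish it by a direct case analysis bounding both $h(\cS)$ and $e(\cS)$ for an arbitrary scramble $\cS$, in the spirit of the argument used for $K_3 \circ B_{2,3}$ earlier in this section.

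The main obstacle is designing $G_n$ so that these two computations are compatible. Driving the screewidth up to $\tfrac14 n^2$ requires that any decomposition suffer the quadratic geodesic congestion of Corollary \ref{corollary:min_geodesics}, which pushes toward many inter-cluster edges; but keeping $\sn(G_n)$ as small as $n$ requires tightly controlling the cuts that separate eggs, which pushes toward few inter-cluster edges and low edge-connectivity. Reconciling these --- choosing the cluster gadgets and the inter-cluster edge pattern so that the optimal tree-cut decomposition is exactly the caterpillar of Corollary \ref{corollary:min_geodesics} while no scramble can exceed order $n$ --- is the heart of the proof, and is where I expect the real work, and the precise numerology matching $\lceil \frac14 n(n+2)\rceil - 1$, to lie.
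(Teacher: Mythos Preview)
Your plan is exactly the paper's approach, and you have correctly identified every structural ingredient: heavy clusters that cannot be split, forcing them to leaf bags; Proposition \ref{prop:empty_bags_trivalent} to make the remaining tree trivalent; Corollary \ref{corollary:min_geodesics} for the lower bound; the cubic caterpillar for the upper bound; and a $K_3\circ B_{2,3}$--style case analysis for $\sn$. What is missing is only the commitment to a specific construction, which you flag as ``the main obstacle'' but which turns out to be simple.

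The paper takes $G = K_n \circ K_n^m$ with $m=\binom{n}{2}+1$: each cluster is a $K_n^m$, and the inter-cluster pattern is a single $K_n$, i.e.\ exactly one edge between every pair of clusters. That last choice is the point you are circling around: with one edge per pair, the adhesion of a trivalent empty node equals the number of leaf-to-leaf geodesics through it, so Corollary \ref{corollary:min_geodesics} gives the screewidth lower bound on the nose, and the caterpillar decomposition realises it (the leaf bags have size $n\le \lceil \tfrac14 n(n+2)\rceil-1$ for $n\ge4$, and link adhesions are dominated by the adjacent node adhesions). The scramble bound $\sn(G)\le n$ is then the expected two-case argument: if some egg lies in a bulb, the $n-1$ edges leaving that bulb force every egg to meet it, giving a hitting set of size $n$; if no egg lies in a bulb, connectivity forces each egg to contain at least two of the $n$ central vertices, so any $n-1$ of them form a hitting set. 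One small step you glossed over: after showing each bulb occupies a single bag, you still need to argue that in an optimal decomposition each bulb may be taken to be \emph{alone} in a \emph{leaf} bag; the paper handles this with a short modification argument (split a bulb off to a fresh leaf and check the width does not increase).
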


\begin{figure}[hbt]
    \centering
    \includegraphics{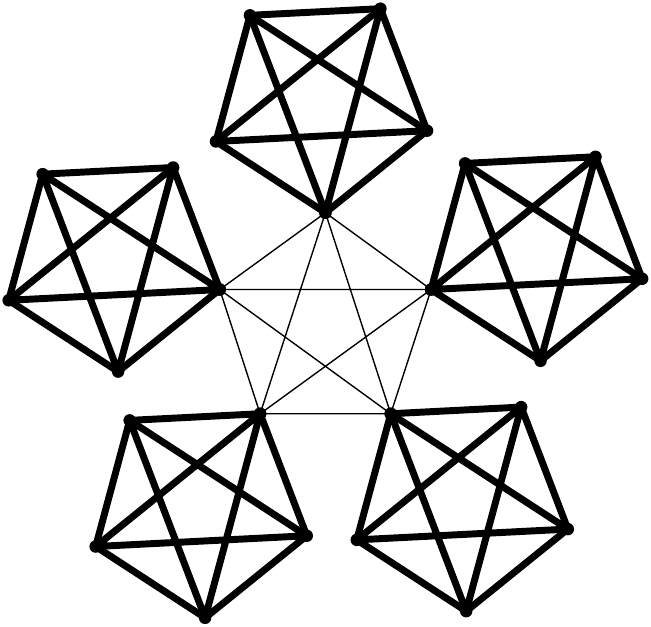}
    \caption{The graph \(G\) from Theorem \ref{theorem:scw_quadratic_sn} where \(n=5\).  The bold edges represent \(\binom{5}{2}+1=11\)} parallel edges each.
    \label{fig:scw_quadratic_sn}
\end{figure}

\begin{proof}
Let $K^m_n$ denote the complete multigraph on $n$ vertices, with $m$ multiedges between every pair of vertices, where \(m=\binom{n}{2}+1\).
Let $G$ be the rooted product of $K_n$ with $K^m_n$; this graph is illustrated in Figure \ref{fig:scw_quadratic_sn} for \(n=5\). We refer to each copy of \(K^m_n\) as a bulb. First, we claim that $\sn(G) = n$. Since \(G\) has \(K_n^m\) as a subgraph, certainly the scramble number is at least this large: the scramble on \(K_n^m\) with each vertex in its own egg has order \(n\).  Suppose for the sake of contradiction that \(\mathcal{S}\) is a scramble of order at least \(n+1\) on \(G\).  First we note that since \(e(\mathcal{S})\geq n+1\), there cannot exist a pair of eggs \(E_1\) and \(E_2\) where \(E_1\) is contained in a bulb and \(E_2\) is contained in its complement.  If no egg is contained in any bulb then every egg has at least two vertices in the central \(K_n\), giving a hitting set of size \(n-1\), a contradiction.  Thus at least one egg is contained in some bulb, and every other egg intersects that bulb.  This means that there is a hitting set of size \(n\), a contradiction.  We conclude that \(\sn(G)=n\).

Let \(\mathcal{T}=(T,\mathcal{X})\) be an optimal tree-cut decomposition of \(G\), with all empty bags of valence three.  Considering the star graph decomposition \(\mathcal{T}^*\) with each copy of \(K_n^m\) in a bag corresponding to a leaf node and a node corresponding to an empty bag in the middle, we have that \( w(\mathcal{T}^*)= |E(K_n)|=\binom{n}{2}\).  It follows that in \(\mathcal{T}\), all vertices in a given bulb must be in the same bag; otherwise some link would have adhesion at least as large as \(m>\binom{n}{2}\), giving \(w(\mathcal{T})> w(\mathcal{T}^*)\) and contradicting optimality.

We now argue that in our optimal tree-cut decomposition, we may assume that every non-empty bag must consist of exactly one bulb, and correspond to a leaf node.  Suppose not, so that there is some non-empty bag \(X_b\) such that either \(X_b\) contains multiple bulbs, or \(b\) is not a leaf node, or both.  Modify the tree-cut decomposition from \(\mathcal{T}=(T,\mathcal{X})\) to \(\mathcal{T}'=(T',\mathcal{X}')\) by adding a link \(l\) to \(b\) connecting it to a new leaf \(c\), and moving one of the bulbs from \(X_b\) to \(X_c\); we refer to \(b\) in this new tree as \(b'\). We claim that \(w(\mathcal{T}')\leq w(\mathcal{T})\).

First note that the bag sizes, node adhesions, and link adhesions in these tree-cut decompositions are largely the same, except for \(b\) in \(\mathcal{T}\), and \(b'\), \(l\), and \(c\) in \(\mathcal{T'}\).  Next, note that \(|X_c|+|\adh(c)|=n+0=n\leq|X_b|\leq |X_b|+|\adh(b)|\), that \(\adh(l)=n-1<|X_b|\leq |{X_b}|+|\adh({b})|\). Finally, we have \(|X_{b'}|=|X_b|-n\) and \(|\adh(b')|=|\adh(b')|+(n-1)\), so \(|X_{b'}|+|\adh(b')|<|X_b|+|\adh({b})|\).  Thus we have \(w(\mathcal{T}')\leq w(\mathcal{T})\).

Henceforth we assume that \(\mathcal{T}=(T,\mathcal{X})\) has each bulb as its own bag \(X_b\) corresponding to a leaf \(b\), each of which has \(|X_b|+|\adh(b)|=n\). We may also assume that every empty bag corresponds to a trivalent node. Consider a non-leaf \(b\) of \(\mathcal{T}\), which must correspond to an empty bag by assumption.  Since the bulbs of \(G\) are connected in a complete graph, we have \(\adh(b)\) is equal to the number of leaf-to-leaf geodesics in \(T\) that pass through the bag \(b\).  By Corollary \ref{corollary:min_geodesics}, some node has at least \[\left\lceil\frac{1}{4}n(n+2)\right\rceil-1\]
edges as part of its adhesion.  This gives us that
\[\scw(G)\geq \left\lceil\frac{1}{4}n(n+2)\right\rceil-1.\]
To show equality, consider the tree-cut decomposition of \(G\) that is the caterpillar graph with \(n\) leaf nodes whose bags are the bulbs; this is illustrated for \(n=5\) in Figure \ref{figure:caterpillar_tree_cut}.   For every leaf node \(b\), we have \(|X_b|+|\adh(b)|=n\).  For every non-leaf node \(b\), we have \(|X_b|+|\adh(b)|\leq \left\lceil\frac{1}{4}n(n+2)\right\rceil-1\), with equality for at least one node.  Finally, the adhesion of any link is equal to the number of leaf-to-leaf geodesics that pass through that link; this is at most the adhesion of the node or nodes incident to it that correspond to an empty bag.  Since \(G\) has a tree-cut decomposition of width equal to the upper bound, we have 
\[\scw(G)= \left\lceil\frac{1}{4}n(n+2)\right\rceil-1,\]
as claimed.
\end{proof}

\begin{figure}
    \centering
    \includegraphics[scale=0.7]{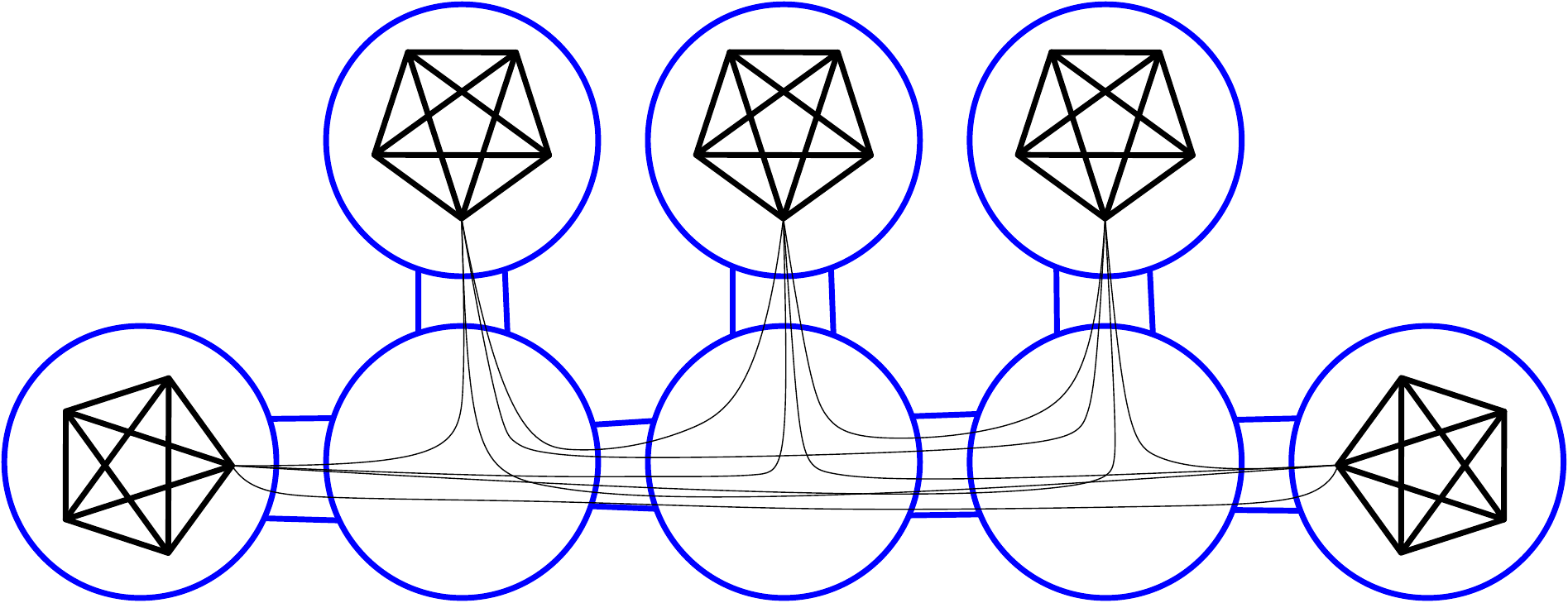}
    \caption{A tree-cut decomposition of \(G\) for \(n=5\), with width \(8\) due to to the adhesion of the central node.}
    \label{figure:caterpillar_tree_cut}
\end{figure}

It follows that the gap between scramble number and screewidth can be arbitrarily large.  In fact, the gap between scramble number and screewidth can be any nonnegative integer, not just \(\left\lceil\frac{1}{4}n(n+2)\right\rceil-1-n\) for some \(n\).  In particular, for a gap of \(\ell\), choose \(n\) so that \(\left\lceil\frac{1}{4}n(n+2)\right\rceil-1-n\geq \ell\).  Then, letting \(G\) be as in the proof of Theorem \ref{theorem:scw_quadratic_sn}, construct \(H\) by attaching a copy of \(K_{t+1}\) via a bridge to any vertex of \(G\), where \(t=\left\lceil\frac{1}{4}n(n+2)\right\rceil-1-\ell\).  By \cite[Lemma 2.4]{echavarria2021scramble}, the scramble number of this graph is equal to the maximum of \(\sn(G)=n\) and \(\sn(K_{t+1})=t\), which by construction is \(t\); similarly, by Proposition \ref{proposition:scw_bridge}, the screewidth is the maximum of \(\scw(G)\) and \(\scw(K_{t+1})=t\), which is \(\left\lceil\frac{1}{4}n(n+2)\right\rceil-1\).  The gap between \(\sn(H)\) and \(\scw(H)\) is thus \(\left\lceil\frac{1}{4}n(n+2)\right\rceil-1-t=\ell\).

It is unknown, however, whether screewidth can be arbitrarily large for some fixed scramble number.

\begin{question}
Is there a function \(f:\mathbb{N}\rightarrow\mathbb{N}\) such that \(\scw(G)\leq f(\sn(G))\) for all graphs \(G\)?
\end{question}

\section{Screewidth and gonality}\label{section:gonality}

Recall that the original motivation for scramble number was to provide a lower bound on divisorial gonality.  Since \(\sn(G)\leq \scw(G)\), a natural question to ask is the following.

\begin{question}
Do we have \(\scw(G)\leq \gon(G)\) for all graphs \(G\)?
\end{question}

A ``yes'' answer would mean that screewidth forms a strictly stronger lower bound on gonality than scramble number; that is, we would have \[\sn(G)\leq \scw(G)\leq \gon(G)\] for all \(G\).  Assuming that this holds, a natural question is then what triples of integers \(\ell\leq m\leq n\) could be the scramble number, screewidth, and the gonality of a graph.  Although we do not know even the possible pairs of scramble number and screewidth, we can provide a partial answer below. Note that neither the statement of our proposition, nor its proof, assumes that \(\scw(G)\leq \gon(G)\) for all \(G\).

\begin{proposition}
\begin{itemize}
    \item[(i)] For any \(n\geq 1\), there exists a graph \(G\) with \(\sn(G)= \scw(G)= \gon(G)=n\).
    \item[(ii)] There exists a graph \(G\) with \(\sn(G)< \scw(G)= \gon(G)\)
    \item[(iii)] For any \(m\geq  n> 1\), there exists a graph \(G\) with \(\sn(G)= \scw(G)=n\) and \(\gon(G)=m\)
    \item[(iv)] For any \(n\geq 4\), there exists a graph \(G\) with \(\sn(G)=n\) and \(\sn(G)< \scw(G)< \gon(G)\).
\end{itemize}
\end{proposition}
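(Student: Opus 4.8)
The plan is to prove each of the four parts by exhibiting an explicit graph and computing (or bounding) its three invariants, leaning heavily on the machinery already developed: the chain $\sn(G)\leq\scw(G)$ from Theorem \ref{theorem:main}, the bridge-additivity of both invariants (Proposition \ref{proposition:scw_bridge} together with \cite[Lemma 2.4]{echavarria2021scramble}), and the quadratic-gap family from Theorem \ref{theorem:scw_quadratic_sn}.

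\medskip

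\noindent\textbf{Parts (i)–(iii).} For part (i) the cleanest choice is a path graph or, more robustly, any tree: by Proposition \ref{proposition:tree_and_complete} a tree has $\scw=1$ (hence $\sn=1$), and trees have gonality $1$, settling $n=1$; for general $n$ I would instead take $G=K_{n+1}$, where $\scw(G)=\sn(G)=n$ by Proposition \ref{proposition:tree_and_complete} and the standard fact $\gon(K_{n+1})=n$. For part (ii) I would revisit the banana example $K_3\circ B_{2,3}$, for which the paper already established $\sn=2<\scw=3$; it remains only to check $\gon(K_3\circ B_{2,3})=3$, which I expect to follow by placing $3$ chips on one banana and using the tripled edges to fire across, while a Dhar-burning / treewidth argument rules out a degree-$2$ positive-rank divisor. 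For part (iii), the goal is to fix $\sn=\scw=n$ while driving gonality up to an arbitrary $m\geq n$; here the natural move is to start from a graph realizing $\sn=\scw=n$ (such as $K_{n+1}$, or a complete multigraph, where equality with gonality holds) and then increase gonality without changing the other two invariants. A promising mechanism is to subdivide or to replace edges by long parallel bundles in a way that multiplies gonality (e.g. using known formulas for gonality of products or subdivisions) while keeping $\scw$ pinned at $n$ via an explicit tree-cut decomposition; since $\sn\leq\scw$ is forced, pinning $\scw=n$ and exhibiting a scramble of order $n$ simultaneously pins $\sn=n$.

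\medskip

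\noindent\textbf{Part (iv).} This is the most delicate case, requiring all three invariants strictly separated with $\sn(G)=n$. The natural approach is to combine the quadratic-gap graph $G_0$ from Theorem \ref{theorem:scw_quadratic_sn}, which has $\sn(G_0)=n$ and $\scw(G_0)=\lceil\tfrac14 n(n+2)\rceil-1>n$, with a high-gonality gadget attached so as to lift gonality strictly above screewidth without disturbing $\sn$ or $\scw$. I would attach, via a bridge, a clique $K_{t+1}$ with $t$ chosen so that $\scw(K_{t+1})=t$ is strictly larger than $\scw(G_0)$; by Proposition \ref{proposition:scw_bridge} and \cite[Lemma 2.4]{echavarria2021scramble}, the bridge construction makes both $\sn$ and $\scw$ of the combined graph equal to the maximum over the two pieces. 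The subtlety is that a bridge makes $\sn$ the \emph{maximum} of the two scramble numbers, so to keep $\sn(H)=n$ I must ensure the attached gadget has scramble number at most $n$ while its screewidth and gonality exceed those of $G_0$; a clique does not satisfy $\sn\leq n$, so the correct gadget is one of low scramble number but high gonality.

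\medskip

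\noindent The main obstacle is precisely this last point in part (iv): I need a single gadget $\Gamma$ with $\sn(\Gamma)\leq n$ but $\scw(\Gamma)>\scw(G_0)$ and $\gon(\Gamma)>\scw(\Gamma)$, which is a triple strict separation inside one block. One candidate is a further instance of the banana-type or rooted-product construction, iterated or enlarged, whose scramble number stays small (bounded by the bulb size) while screewidth grows quadratically and gonality is forced even higher by a chip-firing lower bound. I expect the bulk of the work to be the gonality lower bound on $\Gamma$ (arguing via Dhar's burning algorithm or level-set decompositions that no effective positive-rank divisor of small degree exists), since upper bounds on $\gon$ are easy (exhibit a divisor) and the $\sn$/$\scw$ values transfer cleanly through the bridge by the cited additivity results. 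Once $\Gamma$ is in hand, attaching it to $G_0$ by a bridge and invoking Proposition \ref{proposition:scw_bridge}, \cite[Lemma 2.4]{echavarria2021scramble}, and bridge-additivity of gonality yields the desired strict chain $\sn(H)=n<\scw(H)<\gon(H)$.
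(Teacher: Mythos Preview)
Parts (i) and (ii) match the paper exactly. Part (iii) is underspecified in your proposal; the paper makes it concrete by bridging together a subdivided double-edge multipath \(G_m'\) (with \(\sn=\scw=2\) and \(\gon=m\), the latter via Brill--Noether generality of its metric version) and a multipath \(H_n\) with \(n\)-fold edges (with \(\sn=\scw=\gon=n\)). Your sketch of ``subdivide or thicken to raise gonality while pinning \(\scw\)'' is the right instinct but would need an actual gonality computation to complete.

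For part (iv) there is a genuine gap, and it is the heart of your proposal. You correctly start from the graph \(G_0\) of Theorem \ref{theorem:scw_quadratic_sn} with \(\sn(G_0)=n<\scw(G_0)=\lceil\tfrac14 n(n+2)\rceil-1\), but then go hunting for a bridge-attached gadget \(\Gamma\) to force \(\gon\) above \(\scw(G_0)\). This is unnecessary: \(G_0\) \emph{by itself} already has \(\gon(G_0)=n^2-1\), which for \(n\geq 4\) strictly exceeds \(\scw(G_0)\). The paper proves this directly with a short structural argument: because each bulb \(K_n^m\) has \(m=\binom{n}{2}+1\) parallel edges between every pair of its vertices, no vertex of a bulb can fire without introducing debt unless the entire bulb fires, so each bulb behaves as a single super-vertex for chip-firing. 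Hence every positive-rank divisor must place a chip on each of the \(n(n-1)\) non-central vertices, and the central \(K_n\) then needs \(\gon(K_n)=n-1\) additional chips, giving \(\gon(G_0)=n(n-1)+(n-1)=n^2-1\).

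Your proposed workaround is also essentially circular: the gadget \(\Gamma\) you describe (\(\sn(\Gamma)\leq n\), \(\scw(\Gamma)>\scw(G_0)>n\), \(\gon(\Gamma)>\scw(\Gamma)\)) would itself exhibit a strict triple separation, so constructing \(\Gamma\) is at least as hard as proving (iv). And ``bridge-additivity of gonality'' is not a max formula---only the inequality \(\gon(G)\geq\max\{\gon(G_1),\gon(G_2)\}\) comes for free---so even with \(\Gamma\) in hand the glued graph's gonality would still require a separate argument. The fix is simply to compute \(\gon(G_0)\) directly as above and drop the bridge construction entirely.
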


\begin{proof}
For (i) we can use \(G=K_{n+1}\), which has all invariants equal to \(n\).  For (ii), let \(G\) be the graph in Figure \ref{fig:scw_neq_sn}, which we have already seen has scramble number \(2\) and screewidth \(3\).  Placing \(3\) chips on any single vertex gives a positive rank divisor; and due to the collections of triple parallel edges, \(2\) chips cannot successfully cover all vertices.  Thus this graph has gonality \(3\).

For (iii), we construct a graph \(G\) as follows.  
Let \(G_m\) denote the multipath on \(2m-2\) vertices obtained from \(P_{2m-2}\) by doubling every edge. Note that this graph has first Betti number \(g=2m-3\).  By \cite[Example 4.9]{scramble}, \(\sn(G_m)=2\), and in fact \(\scw(G_m)=2\), as obtained by constructing a path tree-cut decomposition where every vertex is in its own bag.  Now, let \(G_m'\) be the graph obtained from \(G_m\) by subdividing one of each pair of edges \(m-2\) times.  Since scramble number and screewidth are preserved under subdivision, \(\sn(G_m')=\scw(G_m')=2\). We now argue that the gonality of \(G_m'\) is \(m\).  First, the divisor \(m\cdot v\), with \(v\) the leftmost vertex, has positive rank:  by firing \(v\), then \(v\) and the first vertex clockwise from it, and so on, we may move all \(v\) chips to the vertex shared by the first two cycles.  Continuing in this fashion allows us to move chips anywhere in the graph, so \(m\cdot v\) has positive rank, and thus \(\gon(G_m')\leq m\). For a lower bound, we consider the metric version \(\Gamma\) of the graph \(G_m'\).   We may apply \cite[Theorem 1.1]{pflueger_special_divisors} to show that  \(\Gamma\) is Brill-Noether general, meaning that \(\gon(\Gamma)=\left\lfloor \frac{g+3}{2}\right\rfloor=m\).  Since the gonality of a finite graph is at least the gonality of the metric version of that graph \cite[Theorem 1.3]{discrete_metric_different}, we have \(\gon(G_m')\geq m\), so \(\gon(G_m')=m\).

Now Let \(H_n\) denote the multipath  on \(n\) vertices obtained from \(P_n\) by replacing every edge by \(n\) parallel edges.  Considering the scramble whose eggs are the vertices, and the natural tree-cut decomposition with a path of bags of size \(1\), we find \(\sn(H_n)=\scw(H_n)=n\).  Moreover, \(\gon(H_n)=n\): placing \(n\) chips on any one vertex yields a positive rank divisor \cite[Remark 4.9]{tropical_brill}.

\begin{figure}[hbt]
    \centering
    \includegraphics{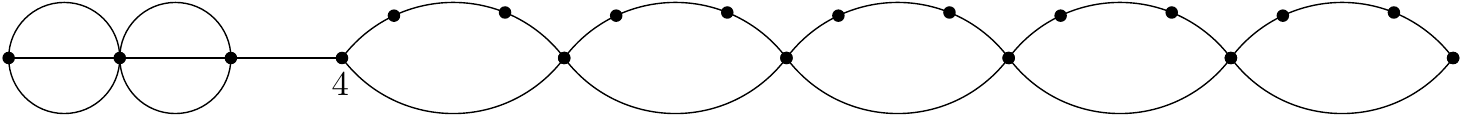}
    \caption{A graph with \(\sn(G)=\scw(G)=3\) and \(\gon(G)=4\).}
    \label{figure:graph_snscw3_gon4}
\end{figure}
Finally, we construct \(G\) by connecting \(G_m'\) and \(H_n\) with a bridge at the leftmost vertex of \(G_m'\) and the rightmost vertex of \(H_m\).  We illustrate \(G\) for \(n=3\) and \(m=4\) in Figure \ref{figure:graph_snscw3_gon4}.  Both its scramble number and its screewidth will equal the maximum of the corresponding invariant for the two graphs, which is \(\max\{2,n\}=n\) since \(n>1\).  The gonality of \(G\) must be at least the maximum gonality of its two connected components, namely \(\max\{m,n\}=m\); and placing \(m\) chips on the leftmost vertex of its \(G_m'\) subgraph does indeed have positive rank, since the chips can move freely across the bridge to \(H_n\).  Thus we have \(\sn(G)=\scw(G)=n\) and \(\gon(G)=m\) as desired.

Finally, for (iv) we may use the graph from the proof of Theorem \ref{theorem:scw_quadratic_sn}, for any choice of \(n\geq 4\).   Recall that this graph satisfied \(\sn(G)=n\) and \(\scw(G)=\left\lceil\frac{1}{4}n(n+2)\right\rceil-1\).  We claim that \(\gon(G)=n^2-1\).  There does exist  positive rank divisor of that degree, namely one that places a chip on every vertex except for one in the central copy of \(K_n\).  To see that we cannot win with fewer chips, we first note that due to the large number of parallel edges, it is impossible to chip-fire any vertex of a bulb without firing all other vertices of that bulb without introducing debt.  This means that any positive rank divisor must place a chip on each vertex outside of the central \(K_n\), since none of its neighbors can fire without the vertex also firing.  Moreover, given the structure of the graph, this means that we may treat each bulb as a single vertex, that either fires or does not.  Thus the minimum number of chips necessary in the central \(K_n\) is  equal to the gonality of \(K_n\), namely \(n-1\).  Taken together, we find that any positive rank divisor must have at least \(n(n-1)+(n-1)=n^2-1\), so \(\gon(G)=n^2-1\).  Thus we have a family of graphs with distinct scramble number, screewidth, and gonality.
\end{proof}

We remark that the construction for (iii) can be extended to construct a graph with \(\tw(G)=p\), \(\sn(G)=\scw(G)=n\) and \(\gon(G)=m\) for any \(1<p\leq n\leq m\) by attaching a complete graph \(K_p\) via a bridge to the rightmost vertex of \(G_m'\).  Thus no function of treewidth can be used to bound scramble number or screewidth.

We now prove that with an additional assumption on the divisors that achieve gonality on \(G\), we do have that \(\scw(G)\leq \gon(G)\).  If \(D\) is a positive rank divisor on \(G\) such that for every pair of effective divisors \(D',D''\) equivalent to \(D\) we have \(\supp(D')\cap \supp(D'')=\emptyset\), we say that \(D\) \emph{partitions} \(V(G)\).

\begin{theorem}\label{theorem:conditional_scw_gon}
Let \(G\) be a graph, and suppose that \(G\) has a positive rank divisor \(D\) that partitions \(V(G)\).  Then \(\scw(G)\leq \deg(D)\).  In particular, if \(\deg(D)=\gon(G)\), then \(\scw(G)\leq \gon(G)\).
\end{theorem}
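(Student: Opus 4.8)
The plan is to build an explicit tree-cut decomposition whose bags are the supports of the effective divisors equivalent to $D$, strung along a path, and to show that its width is exactly $\deg(D)$.

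First I would record the consequences of the hypothesis. Positive rank guarantees that every vertex lies in the support of some effective divisor equivalent to $D$, while the partitioning hypothesis says distinct such divisors have disjoint supports; since the complete linear system $|D|$ is finite, the supports $S_1,\dots,S_k$ of the distinct effective divisors $D_1,\dots,D_k\sim D$ form a partition of $V(G)$. Next I would show that these divisors form a \emph{chain} under the usual firing-script order on $|D|$. Here I would use that $|D|$ is a distributive lattice (meet and join being pointwise min and max of normalized firing scripts) for which $D_i+D_j=(D_i\wedge D_j)+(D_i\vee D_j)$ as divisors. If two of the $D_i$ were incomparable, then $D_i\wedge D_j$ and $D_i\vee D_j$ would be effective divisors distinct from one another and from $D_i,D_j$; evaluating the identity at a vertex $v\in\supp(D_i\vee D_j)$ and using that the four supports are pairwise disjoint forces $0=(D_i\vee D_j)(v)>0$, a contradiction. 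Hence we may order $D_1<\cdots<D_k$. Between consecutive divisors a single subset-firing occurs: writing $D_{i+1}=D_i-L\chi_{U_i}$ for the Laplacian $L$, the level-set decomposition together with consecutiveness in the chain forces the script difference to be the indicator $\chi_{U_i}$ of one set $U_i$.

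I would then take $\cT$ to be the path with nodes $1,\dots,k$ and bags $X_i=S_i$. For each vertex $v$ let $\ell(v)$ be the index with $v\in S_{\ell(v)}$, and set $A_i=\{v:\ell(v)\le i\}$ and $B_i=\{v:\ell(v)>i\}=A_i^{c}$. The heart of the argument is the structural claim that $U_i=A_i$, equivalently that no edge joins a vertex of $S_p$ to one of $S_q$ with $q\ge p+2$ --- so the only edges crossing a link or passing over an internal node join \emph{adjacent} supports. I expect this to be the main obstacle. I would attack it through chip-firing. The ``boundary is paid for by chips'' principle gives, for any effective $D'\sim D$ and any $U$ with $D'-L\chi_U$ effective, that $|E(U,U^{c})|\le D'(U)\le\deg(D)$. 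More delicately, I would show each $U_i$ induces a connected subgraph: if a component $C$ of $G[U_i]$ were a proper, chip-free part, then firing $C$ alone would give an effective divisor strictly between the consecutive divisors $D_i$ and $D_{i+1}$, which is impossible, and a short computation rules out chip-free components, forcing $G[U_i]$ (and symmetrically $U_i^{c}$) connected. Using connectivity of $U_i$ and $U_i^{c}$, together with the fact that every vertex receives all its chips at step $\ell(v)$ and discharges them one step later, I would argue that a vertex $w\in U_i$ with $\ell(w)\ge i+2$ (which is forced to have all neighbours inside $U_i$) cannot coexist with this connected cut structure, yielding $U_i=A_i$.

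Granting the structural claim, the width bound is immediate. Firing $U_i=A_i$ from $D_i$, whose chips all lie in $S_i\subseteq A_i$, stays effective, so the link adhesion satisfies $|\adh(l_i)|=|E(A_i,A_i^{c})|\le D_i(A_i)=\deg(D)$; in fact every such edge joins $S_i$ to $S_{i+1}$. For an internal node $i$ the no-skipping property makes $\adh(i)=E(A_{i-1},B_i)$ empty, so $|X_i|+|\adh(i)|=|S_i|\le\deg(D)$, since the support of an effective divisor has size at most its degree; leaf nodes are handled identically. Thus every link and node contributes at most $\deg(D)$, so $\scw(G)\le w(\cT)=\deg(D)$, and taking $\deg(D)=\gon(G)$ gives $\scw(G)\le\gon(G)$.
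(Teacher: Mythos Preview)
Your approach is the paper's: take the supports of the effective divisors in $|D|$ as bags and link two bags when the divisors differ by one subset-firing. Your lattice argument showing $|D|$ is a chain is a nice alternative to the paper's direct acyclicity proof (and in fact shows the tree is a path, which the paper does not make explicit). Both proofs then rest on the same ``no-skipping'' fact---every edge of $G$ joins consecutive supports, so all node adhesions vanish and each link adhesion is exactly $E(S_i,S_{i+1})$; the paper asserts this in one line, while you correctly flag it as the crux.

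Your sketch of that step is garbled, however. Firing a \emph{chip-free} component $C\subsetneq U_i$ from $D_i$ does not give an effective divisor: $C$ has edges to $U_i^c$ (else $G$ is disconnected) and goes negative. The right argument runs the other way: a chip-free component of $G[U_i]$ is impossible to begin with (it would go negative under firing $U_i$, contradicting $D_{i+1}\ge 0$), so every component meets $S_i$; then if $G[U_i]$ had a proper component $C$ (now necessarily carrying chips), firing $C$ alone stays effective---since $C$ has no edges to $U_i\setminus C$, the effect on $C$ is the same as firing all of $U_i$---and lands strictly between $D_i$ and $D_{i+1}$, contradicting consecutiveness. Even granting connectivity of $G[U_i]$ and $G[U_i^c]$, your final clause (``cannot coexist with this connected cut structure'') is not an argument. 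One clean way to finish: a vertex $w\in U_i$ with $\ell(w)\ge i+2$ has zero chips in both $D_i$ and $D_{i+1}$, so all its neighbours lie in $U_i$; in particular its neighbour in $S_{\ell(w)-1}$ lies in $U_i$, and descending you eventually place a vertex of $S_{i+1}\subseteq U_i^c$ inside $U_i$, a contradiction.
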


\begin{proof}
It suffices to construct a tree-cut decomposition of width at most \(\deg(D)\).  Let \(\mathcal{X}=\{\supp(D')\,|\,D'\sim D, D'\geq 0\}\), and let \(T\) be the graph with a vertex \(b_{D'}\) for every \(D'\in |D|\), with an edge connecting \(b_{D'}\) and \(b_{D''}\) if and only if there is a subset-firing move that transforms \(D'\) into \(D''\).  We claim that \((T,\mathcal{X})\) is a tree-cut decomposition of \(G\).  See Figure \ref{figure:divisor_to_decomposition} for an example of this construction.

\begin{figure}[hbt]
    \centering
    \includegraphics[scale=0.7]{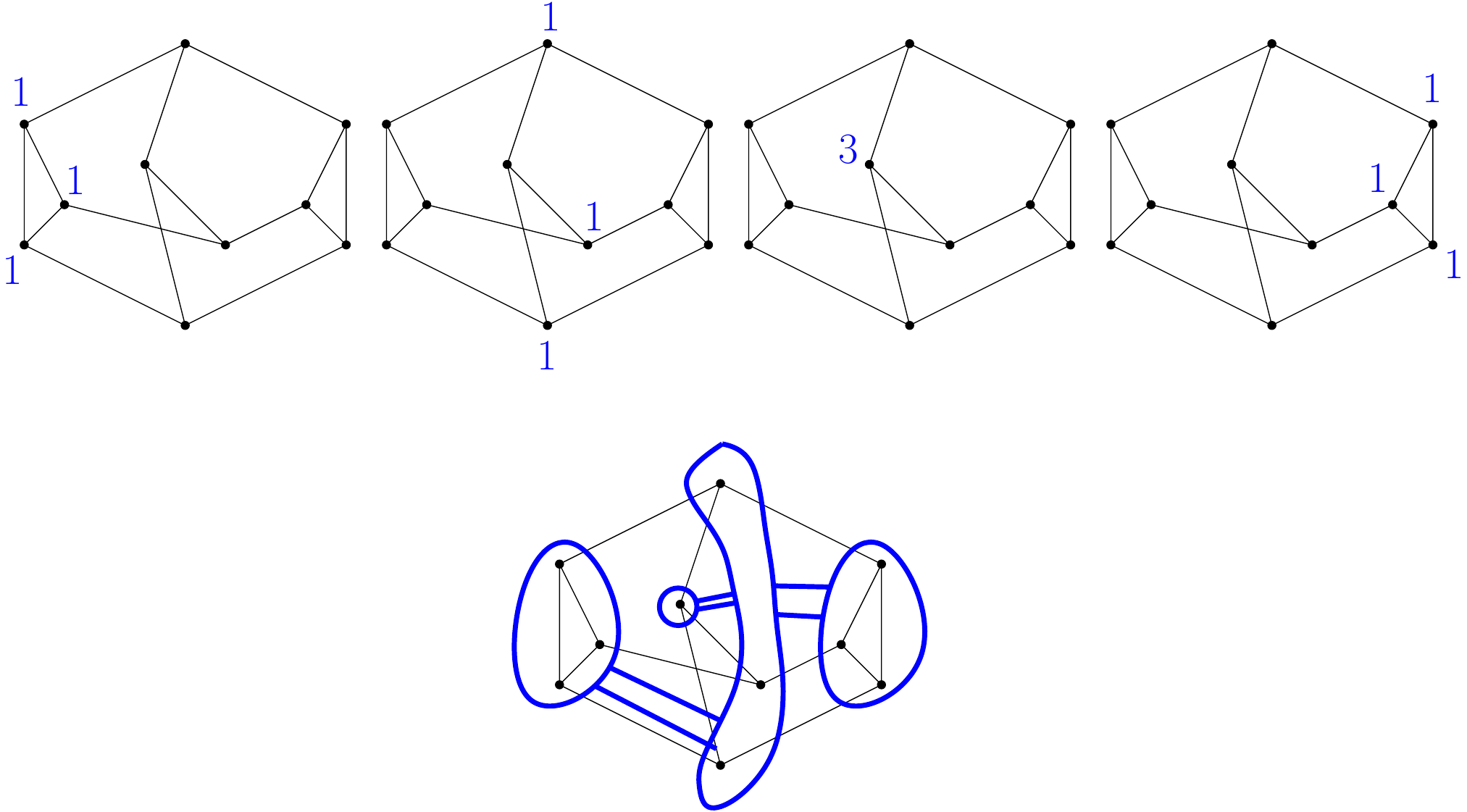}
    \caption{A graph with a class of effective divisors of positive rank with disjoint support, and the corresponding tree-cut decomposition.}
    \label{figure:divisor_to_decomposition}
\end{figure}

By assumption the elements of \(\mathcal{X}\) are disjoint, and since \(D\) has positive rank every vertex is contained in some element of \(\mathcal{X}\); thus \(\mathcal{X}\) is indeed a partition of \(V(G)\).  To show that \(T\) is a tree, we first note that it is connected:  \(b_{D'}\) and \(b_{D''}\) are connected by a path corresponding to the level-set decomposition of a set of firing moves transforming \(D'\) to \(D''\).  Now we argue \(T\) contains no cycles.  Consider two adjacent nodes \(b_1\) and \(b_2\), corresponding to divisors \(D_1\) and \(D_2\).  Let \(U\subset V(G)\) be the subset of vertices that, when fired, transforms \(D_1\) into \(D_2\).  We note that \(E(\supp(D_1),\supp(D_2))\) forms an edge-cut for \(G\).  In particular, deleting those edges splits \(G\) into \(G[U]\) and \(G[U^C]\).

Now suppose for the sake of contradiction that \(T\) contains a cycle, say \(b_1-b_2-\cdots-b_k-b_1\) where \(k\geq 2\) and all \(b_i\) are distinct. This means we have a collection of distinct effective divisors \(D_1,D_2,\ldots,D_k\) equivalent to \(D\) and a collection of subsets \(U_1,U_2,\ldots,U_k\) such that firing \(U_i\) transforms \(D_i\) into \(D_{i+1}\), where we take \(D_{k+1}=D_1\). Since \(E(\supp(D_1),\supp(D_2))\) forms an edge-cut of the graph, in which all chips of \(D_1\) are in one component and all chips of \(D_2\) are in another, we note that in order for chips to pass back to \(\supp(D_1)\), they must first pass back through \(\supp(D_2)\).  But then either \(D_k=D_2\), contradicting the distinctness of our divisors; or \(D_k\) and \(D_2\) are distinct with overlapping support, contradicting our starting assumption.  Thus \(T\) cannot contain a cycle, and as a connected graph must be a tree. This means that \(\mathcal{T}\) is a tree-cut decomposition of \(G\).  

We now determine \(w(\mathcal{T})\).  First we note that if \(l\) is a link with end-nodes \(b_1,b_2\) and \(e\in \adh(l)\), then the endpoints of \(e\) are in \(B_{b_1}\) and \(B_{b_2}\); this is because the edges connecting the support of two adjacent divisors form an edge cut for the graph.  A similar argument shows that the adhesion of any node is empty.  Every link in the graph has the same adhesion, namely \(\deg(D)\), since all \(\deg(D)\) chips are transferred by a subset-firing move.  Since \(|\supp(D')|\leq \deg(D)\) for every effective divisor \(D'\) equivalent to \(D\), we have that \(|X_b|\leq \deg(D)\) for all \(b\in V(T)\).  Thus \(w(\mathcal{T})=\deg(D)\), and we conclude that \(\scw(G)\leq \deg(D)\).
\end{proof}

\begin{corollary}\label{corollary:k_edge_connected}
If \(G\) is a \(k\)-edge-connected graph of gonality \(k\), then \(\sn(G)=\scw(G)= \gon(G)\).
\end{corollary}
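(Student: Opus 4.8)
The plan is to sandwich all three invariants between \(k\) and \(k\), using the chain \(\sn(G)\le\scw(G)\) from Theorem \ref{theorem:main} together with the known bound \(\sn(G)\le\gon(G)\). Since \(\gon(G)=k\), it then suffices to establish the two inequalities \(\scw(G)\le k\) and \(\sn(G)\ge k\); combined with \(\sn(G)\le\scw(G)\le\gon(G)=k\), these force \(\sn(G)=\scw(G)=\gon(G)=k\).

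For the upper bound \(\scw(G)\le k\), I would apply Theorem \ref{theorem:conditional_scw_gon}: it is enough to produce a positive-rank divisor of degree \(k\) that partitions \(V(G)\). I would take any positive-rank divisor \(D\) with \(\deg(D)=\gon(G)=k\), and show that \(k\)-edge-connectivity forces \(D\) to partition \(V(G)\). The key observation is that if a subset-firing of a set \(U\) carries one effective divisor \(D_1\) to another effective divisor \(D_2\), then each \(v\in U\) must satisfy \(D_1(v)\ge|E(\{v\},U^C)|\); summing over \(v\in U\) gives \(\deg(D_1)\ge|E(U,U^C)|\ge\lambda(G)\ge k\). Since \(\deg(D_1)=k\), every inequality is tight: all \(k\) chips of \(D_1\) lie in \(U\), the cut \(E(U,U^C)\) has exactly \(k\) edges, and every vertex of \(U\) is emptied, so \(\supp(D_2)\subseteq U^C\). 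Applying this along a level-set decomposition from an effective \(D'\sim D\) to an effective \(D''\sim D\) (with \(\emptyset\subsetneq U_1\subseteq\cdots\subseteq U_m\subsetneq V(G)\)) shows \(\supp(D')\subseteq U_1\) while \(\supp(D'')\subseteq U_m^C\subseteq U_1^C\); hence any two distinct effective divisors equivalent to \(D\) have disjoint support, which is exactly the partition condition.

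For the lower bound \(\sn(G)\ge k\), I would use the scramble \(\cS\) whose eggs are the singletons \(\{v\}\) for all \(v\in V(G)\). Every hitting set must contain every vertex, so \(h(\cS)=|V(G)|\); and a minimum egg-cut is just a minimum edge-cut of \(G\), so \(e(\cS)=\lambda(G)\ge k\). To control \(h(\cS)\) I would note that the all-ones divisor already has positive rank, so \(\gon(G)\le|V(G)|\), whence \(|V(G)|\ge\gon(G)=k\). Therefore \(\|\cS\|=\min\{h(\cS),e(\cS)\}=\min\{|V(G)|,\lambda(G)\}\ge k\), giving \(\sn(G)\ge k\).

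I expect the main obstacle to be the partition step: one must argue carefully that the tightness forced at the first firing propagates correctly along the entire level-set decomposition, so that the supports of the two endpoint divisors (not merely adjacent ones) end up on opposite sides of the cut \((U_1,U_1^C)\). The remaining verifications — that the singleton scramble has the claimed order and that the final chain of inequalities collapses — are routine.
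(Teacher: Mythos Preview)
Your proof is correct and follows the same structure as the paper's: establish \(\scw(G)\le k\) via Theorem \ref{theorem:conditional_scw_gon} by showing a degree-\(k\) positive-rank divisor partitions \(V(G)\), and establish \(\sn(G)\ge k\) via the singleton scramble. The only difference is that the paper outsources both of these facts to external references (\cite[Lemma 10.24]{sandpiles} and \cite[Lemma 2.6]{echavarria2021scramble}), whereas you supply self-contained arguments for them; your tightness argument for the partition property and your observation that \(|V(G)|\ge\gon(G)\) are exactly what those cited lemmas encode.
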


\begin{proof} By \cite[Lemma 10.24]{sandpiles}, any positive rank divisor of degree \(k\) on a \(k\)-edge-connected graph partitions the \(V(G)\).  It follows immediately from Theorem \ref{theorem:conditional_scw_gon} that \(\scw(G)\leq \gon(G)=k\).

By \cite[Lemma 2.6]{echavarria2021scramble}, \(\sn(G)\geq \min\{|V(G)|,\lambda(G)\}\) for any graph \(G\).  Since \(|V(G)|\geq \gon(G)=k\) and \(\lambda(G)\geq k\), we have \(\sn(G)\geq k\).  We now have \(k\leq \sn(G)\leq \scw(G)\leq \gon(G)=k\), letting us conclude that \(\sn(G)=\scw(G)=\gon(G)=k\).
\end{proof}

We can use these results to compute the screewidth of many graphs.

\begin{proposition}\label{prop:scw_families}
\begin{itemize}
    \item[(i)] If \(C_n\) is a cycle graph, then \(\scw(C_n)=2\)
    \item[(ii)] For a \(k\)-partite graph \(K_{n_1,n_2,\ldots,n_k}\), we have \(\scw(K_{n_1,n_2,\ldots,n_k})=\sum_{i}n_i-\max_i\{n_i\}\).
    \item[(iii)] If \(G_{m,n}\) denotes the \(m\times n\) grid graph, then \(\scw(G_{m,n})=\min\{m,n\}\).
    \item[(iv)] If \(Y_{m,n}\) denotes the \(m\times n\) stacked prism graph, then \(\scw(Y_{m,n})=\min\{m,2n\}\).
    \item[(v)] If \(T_{m,n}\) denotes the \(m\times n\) toroidal grid graph, then \(\scw(T_{m,n})=\min\{2m,2n\}\).
    \item[(vi)] If \(Q_n\) denotes the \(n\)-dimensional hybercube graph, then \(\scw(Q_n)\leq2^{n-1}\), with equality for \(n\leq 5\).
\end{itemize}
\end{proposition}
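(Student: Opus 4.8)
The plan is to treat each part of Proposition~\ref{prop:scw_families} by producing a matching lower and upper bound, using scramble number (via Theorem~\ref{theorem:main}) for the lower bound and an explicit tree-cut decomposition for the upper bound. For the families where the graph is $k$-edge-connected with gonality $k$, Corollary~\ref{corollary:k_edge_connected} collapses all three invariants at once, so my first move in each case is to identify whether that corollary applies. Concretely, for (i) the cycle $C_n$ is $2$-edge-connected of gonality $2$, so Corollary~\ref{corollary:k_edge_connected} immediately gives $\scw(C_n)=2$; this is the cleanest case and I would dispose of it first.

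For the product families (iii)--(v), the strategy is the same two-sided squeeze but now the edge-connectivity and gonality must be computed or cited. For the grid $G_{m,n}$ with (say) $m\le n$, I would exhibit a scramble of order $m$ — for instance using the $m$ ``rows'' as eggs, so that any egg-cut must sever at least $m$ edges and any hitting set needs $m$ vertices — giving $\sn(G_{m,n})\ge m$; the matching upper bound comes from the path-shaped tree-cut decomposition whose bags are the $n$ columns, each of size $m$, with every link adhesion equal to $m$ (the edges between consecutive columns) and every node adhesion $0$. The prism $Y_{m,n}=C_m\square P_n$ and torus $T_{m,n}=C_m\square C_n$ are handled analogously, where the cyclic factor roughly doubles the relevant cut-width; the decomposition slices perpendicular to the longer direction, and the scramble bound should follow from the known gonality of these product graphs (or from Corollary~\ref{corollary:k_edge_connected}, since $Y_{m,n}$ and $T_{m,n}$ are known to have gonality equal to their edge-connectivity in the relevant regime). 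I expect the careful bookkeeping to be in identifying $\min\{m,2n\}$ versus $\min\{2m,2n\}$, i.e.\ getting the factors of two right by checking which direction gives the smaller cut.

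For the multipartite case (ii), I would combine Lemma~\ref{lemma: scw_n-alpha} with the scramble lower bound: since $K_{n_1,\dots,n_k}$ is simple with independence number $\alpha=\max_i\{n_i\}$ (the largest part is a maximum independent set), Lemma~\ref{lemma: scw_n-alpha} gives $\scw\le \sum_i n_i-\max_i\{n_i\}$ directly, and I would match this with a scramble of the same order whose eggs are the singletons of the non-maximum parts, arguing that the high edge-connectivity forces $e(\mathcal S)$ to exceed the hitting-set size so that $\|\mathcal S\|=\sum_i n_i-\max_i\{n_i\}$.

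The hypercube (vi) is the genuinely hard part and the place where I expect the main obstacle. The upper bound $\scw(Q_n)\le 2^{n-1}$ is routine: split $Q_n$ along a coordinate into two copies of $Q_{n-1}$, giving a two-bag decomposition with each bag of size $2^{n-1}$ and link adhesion $2^{n-1}$ (the matching between the halves), so $w=2^{n-1}$. The difficulty is the claimed equality for $n\le 5$: a lower bound of $2^{n-1}$ on screewidth must be certified by a scramble of that order (via Theorem~\ref{theorem:main}), and constructing such a scramble — or invoking the known gonality $\gon(Q_n)=2^{n-1}$ for small $n$ together with Corollary~\ref{corollary:k_edge_connected}, since $Q_n$ is $n$-edge-connected but $2^{n-1}>n$ for $n\ge 4$ so the corollary does \emph{not} apply — is where the real work lies. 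I anticipate that for $n\le 5$ the equality is established by exhibiting an explicit scramble of order $2^{n-1}$ (likely built from the vertices themselves or from well-chosen subcubes) and verifying its order by a cut argument, with the failure to prove equality for $n\ge 6$ reflecting exactly the gap between $\sn$ and $\scw$ explored elsewhere in the paper; I would present the small-$n$ cases by direct scramble constructions rather than a uniform argument.
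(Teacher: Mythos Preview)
Your proposal is correct and essentially parallel to the paper's argument, but the paper streamlines several steps by routing through Theorem~\ref{theorem:conditional_scw_gon} and Corollary~\ref{corollary:k_edge_connected} rather than building explicit decompositions. For (i) you match the paper exactly. For (ii) you take a genuinely different route: the paper observes that \(K_{n_1,\dots,n_k}\) is \(\ell\)-edge-connected of gonality \(\ell\) (citing \cite{debruyn2014treewidth}) and applies Corollary~\ref{corollary:k_edge_connected} in one line, whereas you use Lemma~\ref{lemma: scw_n-alpha} for the upper bound and a hand-built scramble for the lower bound. Your version is more self-contained (no external gonality citation needed) and works fine; the paper's is shorter. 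For (iii)--(vi), the paper does not write down the column/slice decompositions directly but instead invokes Theorem~\ref{theorem:conditional_scw_gon} on the natural row/column divisors that partition \(V(G)\); the tree-cut decomposition that theorem produces is exactly the path-of-columns decomposition you describe, so the content is identical. The lower bounds for (iii)--(v) come from the scramble numbers computed in \cite[Propositions~5.2--5.4]{scramble}, and for (vi) the equality at \(n\le 5\) is cited from \cite{scramble} and \cite{uniform_scrambles} rather than re-derived.

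One small correction: your parenthetical suggestion that Corollary~\ref{corollary:k_edge_connected} might apply to \(G_{m,n}\), \(Y_{m,n}\), or \(T_{m,n}\) is off---these graphs have edge-connectivity \(2\), \(3\) (or \(2\)), and \(4\) respectively, strictly less than their gonalities in general, so the corollary does not fire. Your primary explicit-decomposition approach is the right one there, and it is what the paper does (via the divisor-partition theorem).
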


\begin{proof} A cycle graph is a \(2\)-edge-connected graph of gonality \(2\), and \(K_{n_1,n_2,\ldots,n_k}\) is an \(\ell\)-edge-connected graph of gonality \(\ell\), where \(\ell=\sum_{i}n_i-\max_i\{n_i\}\) \cite[\S 4]{debruyn2014treewidth}. Thus Corollary \ref{corollary:k_edge_connected} immediately proves claims (i) and (ii).

The graph \(G_{m,n}\) has divisors of positive rank and degrees \(m\) and \(n\) that each partition \(V(G_{m,n})\), namely a divisor placing a chip on every vertex in a single column, and a divisor placing a chip on every vertex in a single row.  By Theorem \ref{theorem:conditional_scw_gon} we have \(\scw(G_{m,n})\leq \min\{m,n\}\); and since \(\sn(G_{m,n})=\min\{m,n\}\) \cite[Proposition 5.2]{scramble}, we have \(\scw(G_{m,n})= \min\{m,n\}\).  A nearly  identical argument holds for \(T_{m,n}\), except with placing a chip on every vertex in a pair of rows or a pair of columns; and a hybrid argument holds for \(Y_{m,n}\).  See \cite[Propositions 5.3 and 5.4]{scramble} for the relevant results on scramble number.

For \(Q_n\), we again have a positive rank divisor \(D\) of degree \(2^{n-1}\) partitioning \(V(Q_n)\):  viewing \(Q_n=Q_{n-1}\square K_2\), place a single chip on every vertex in one copy of \(Q_{n-1}\).  For \(n\leq 5\), we have \(\sn(Q_{n})=2^{n-1}\) (see \cite{scramble} for \(n\leq 4\), and \cite{uniform_scrambles} for \(n=5\)), giving \(\scw(Q_{n})=2^{n-1}\) for small \(n\).
\end{proof}

In general, a graph may not have a divisor achieving gonality that partitions the vertices of \(V(G)\).  We illustrate with an example another possible approach to handle this situation.  Intuitively, we use a positive rank divisor to probe all vertices of the graph, gradually building up a tree-cut decomposition as we go.

\begin{example}
Consider the Sierpinski graph illustrated in Figure \ref{figure:sierpinski_1}, with a divisor \(D\) of degree \(6\) and positive rank.

\begin{figure}[hbt]
    \centering
    \includegraphics{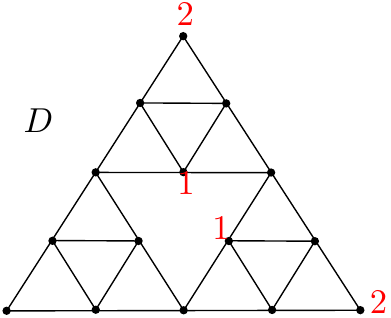}
    \caption{The Sierpinski graph with a positive rank divisor of degree 6.}
    \label{figure:sierpinski_1}
\end{figure}

We choose \(v\in V(G)- \supp(D)\).  Since \(D\) has positive rank, there exists at least one effective divisor \(D'\) with \(D\sim D'\) and \(v\in \supp(D')\); among all such divisors, choose \(D'\) to be the one that requires the minimum number of subset-firing moves to reach from \(D\). 
We can then consider the level-set decomposition \(A_0\subseteq A_1\subseteq\ldots \subseteq A_n\) to get from \(D\) to \(D'\). Let \(D_i\) be the configuration achieved after firing \(A_0,\ldots,A_{i-1}\), so \(D=D_0\) and \(D'=D_n\). Figure \ref{figure:sierpinski_2} presents the appropriate sequence of subset-firing moves.

\begin{figure}[hbt]
    \centering
    \includegraphics[scale=0.8]{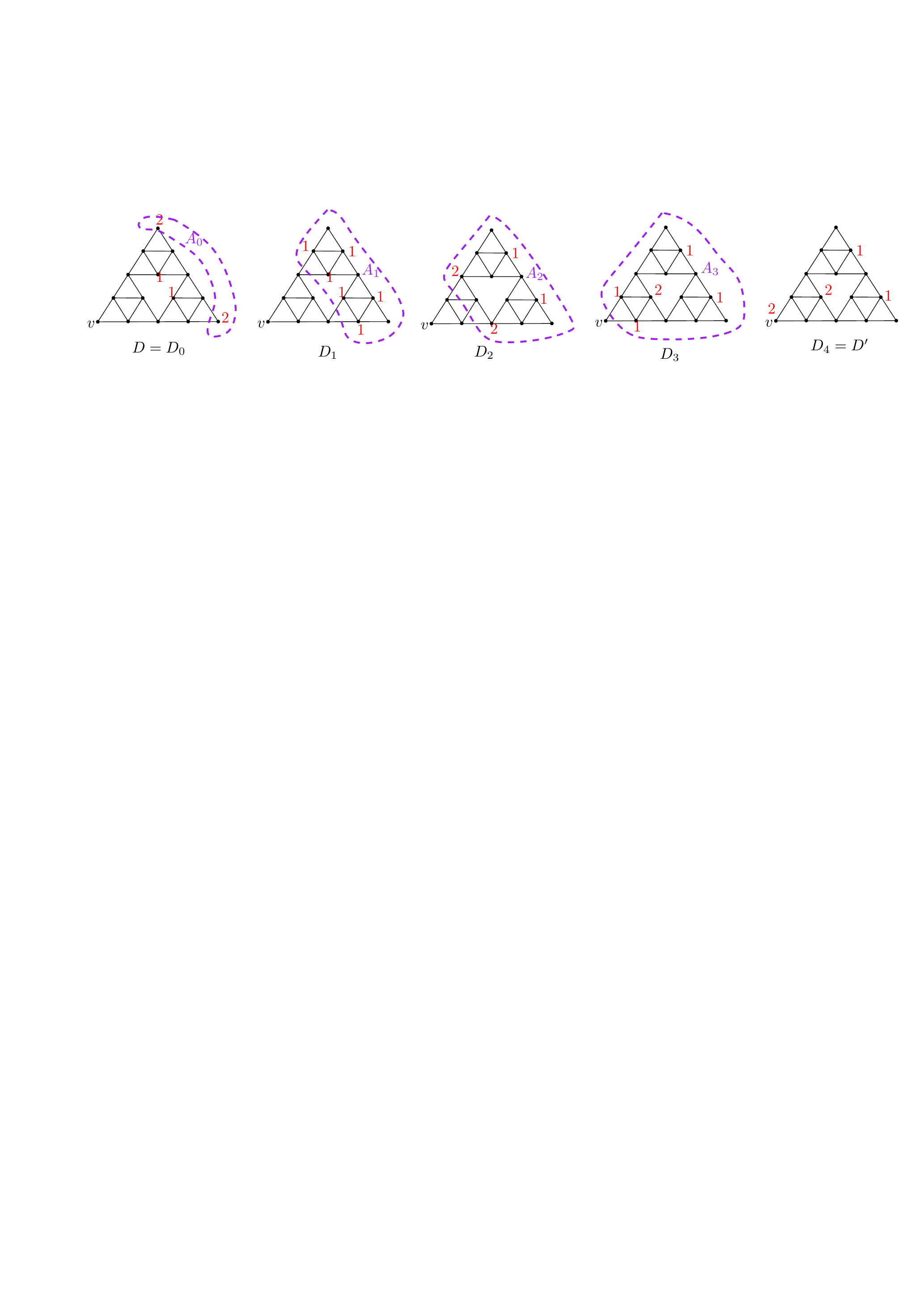}
    \caption{A sequence of subset-firing moves to place a chip on the vertex \(v\).}
    \label{figure:sierpinski_2}
\end{figure}

We start to build a tree-cut decomposition as follows.  Start with a trivial tree-cut decomposition, and turn it into a path of \(n+1\) nodes \(b_0,\ldots,b_n\) with a link between each successive node such that \(X_{b_0}=A_0\), \(X_j=A_j-A_{j-1}\) for \(1\leq j\leq n-1\), and \(X_n=V(G)-A_n\).
 This is illustrated in Figure \ref{figure:sierpinski_3}.
 
 \begin{figure}[hbt]
    \centering
    \includegraphics[scale=0.85]{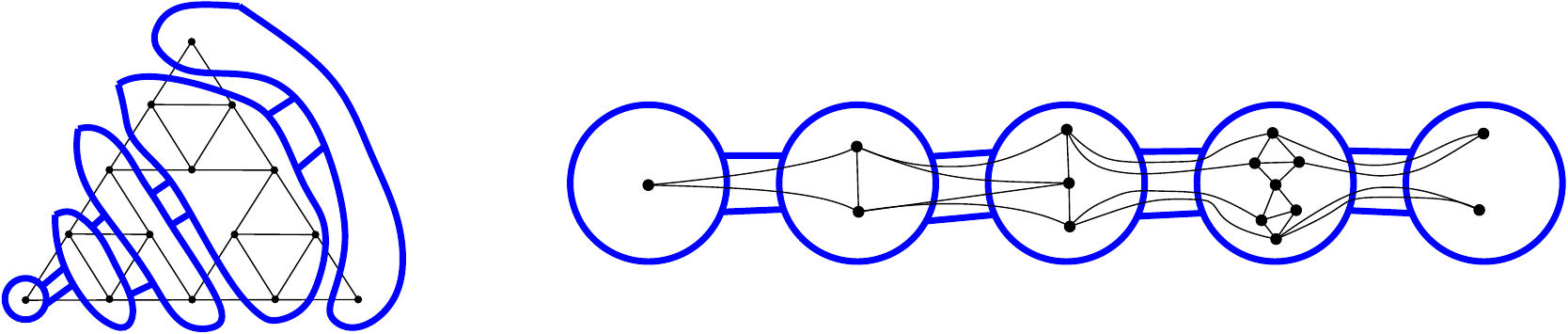}
    \caption{A tree-cut decomposition induced by the subset-firing moves.}
    \label{figure:sierpinski_3}
\end{figure}

\begin{figure}[hbt]
    \centering
    \includegraphics{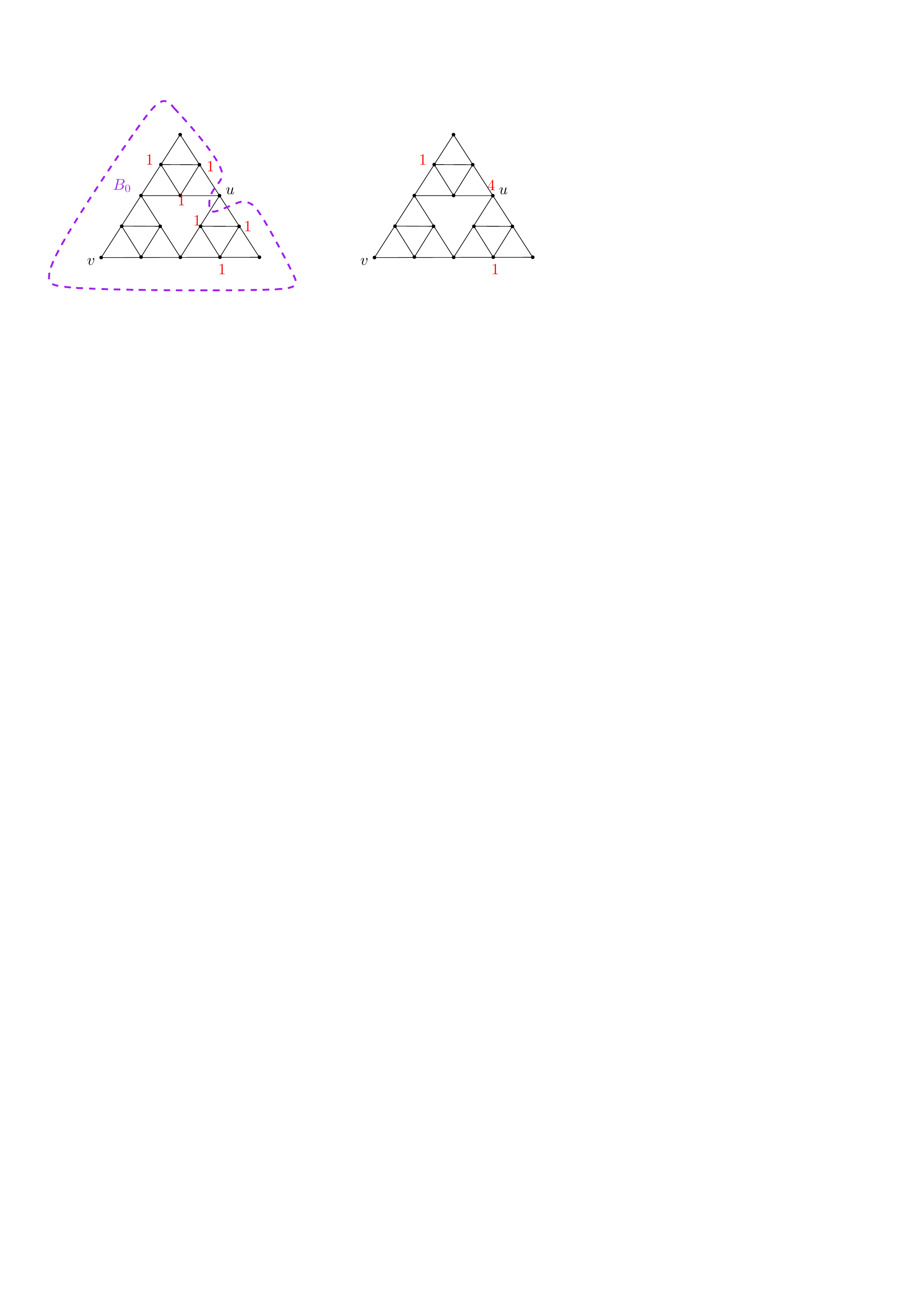}
    \caption{A subset-firing move to place a chip on the last vertex.}
    \label{figure:sierpinski_4}
\end{figure}
 
Note that we can pick a (unique, in our example) vertex \(u\) that has not yet been in the support of any of divisors.  We have that \(u\in X_k\) for some \(k\); consider \(D_k\).  Of all the effective divisors equivalent to \(D\) with \(u\) in their support, let \(D''\) be the one that can be reached from \(D_k\) in the minimum number of subset-firing moves.  We then consider the level set decomposition \(B_0\subseteq B_1\subseteq\ldots\subseteq B_m\) of moving from \(D_k\) to \(D_k''\).  For each \(B_i\), we have a corresponding divisor \(D_i'\), namely the divisor obtained by starting with \(D_k\) and firing \(B_0,B_1,\ldots,B_{i-1}\).  This is illustrated in Figure \ref{figure:sierpinski_4} for the only possible choice of \(u\).

Now modify our present tree-cut decomposition by turning \(X_k\) into a path on \(m+1\) bags rooted at \(X_k\), where \(X_k'=X_k\cap B_0\), \(X_j'=X_k\cap (B_j-B_{j-1})\) for \(1\leq j\leq m-1\), and \(X_m'=V(G)-B_m\).  This is illustrated in Figure \ref{figure:sierpinski_5}.

\begin{figure}[hbt]
    \centering
    \includegraphics[scale=0.85]{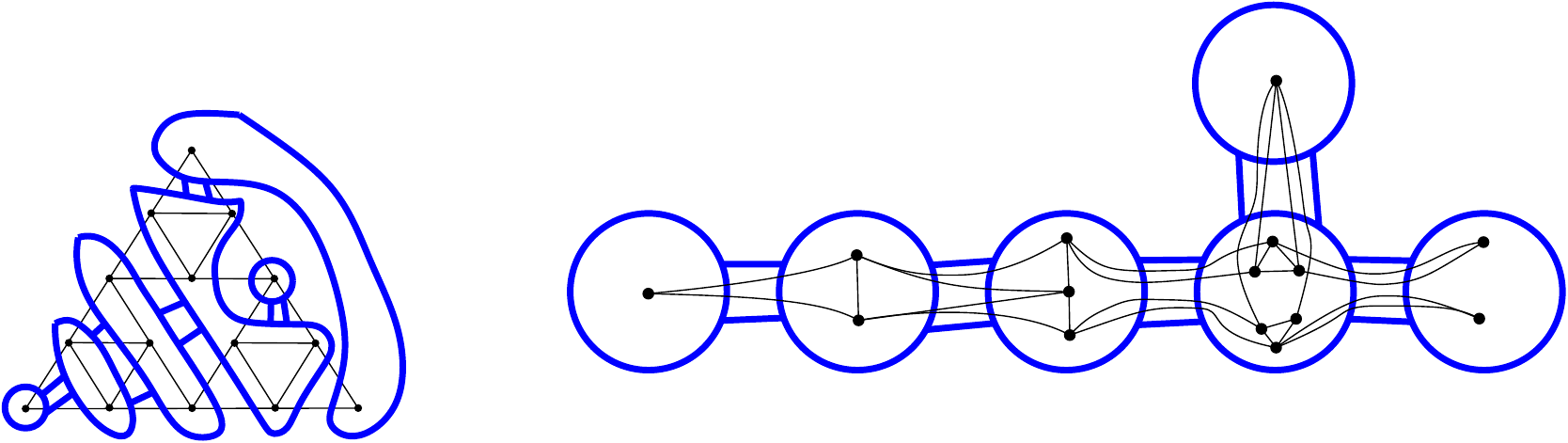}
    \caption{An extension of the tree-cut decomposition for the Sierpinski graph, with width equal to the degree of the starting divisor.}
    \label{figure:sierpinski_5}
\end{figure}

At this point we have reached a tree-cut decomposition of width \(6=\deg(D)\), and we finish.
\end{example}

\begin{figure}[hbt]
    \centering
    \includegraphics[scale=0.7]{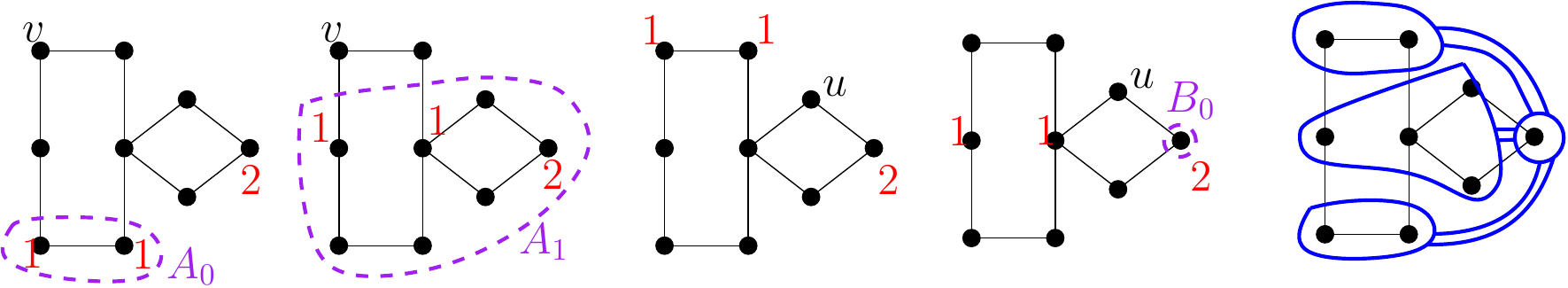}
    \caption{A sequence of subset-firing moves yielding a tree-cut decomposition of high width.}
    \label{figure:counterexample_remains}
\end{figure}

Unfortunately this strategy does not work in general, at least for arbitrary choices of subset-firing moves.  Consider for instance the graph in Figure \ref{figure:counterexample_remains}, with a positive rank divisor of degree \(4\) and a possible way to follow our strategy. (We remark that the gonality of this graph is \(2\) rather than \(4\), but it remains an illustrative example.) The resulting tree-cut decomposition has a width of \(6\) due to an edge adhesion, even though the divisor has degree \(4\).

\begin{figure}[hbt]
    \centering
    \includegraphics[scale=0.75]{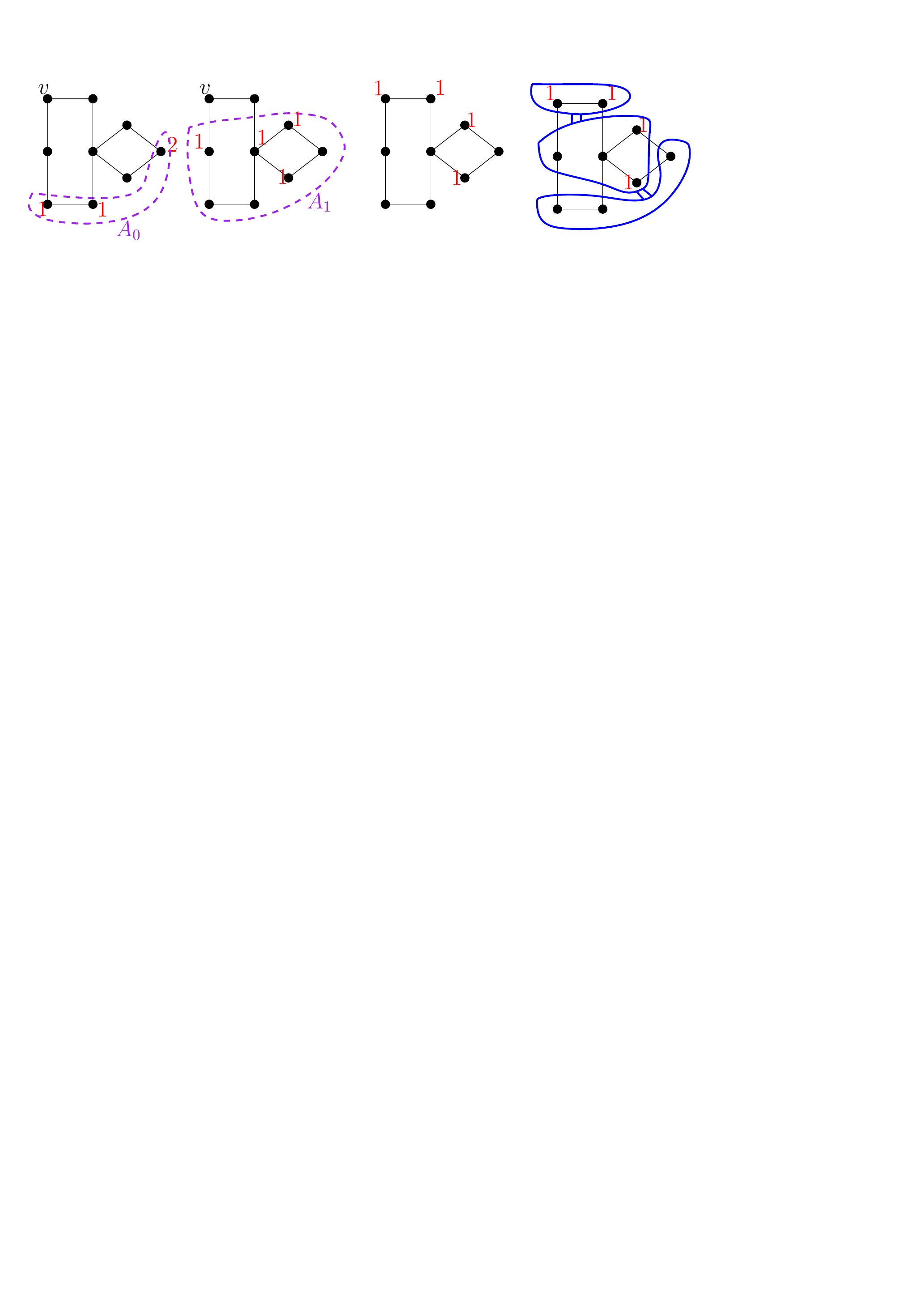}
    \caption{A sequence of subset-firing moves yielding a tree-cut decomposition of low width.}
    \label{figure:counterexample_remains_but_dhars}
\end{figure}

However, if we follow a more natural choice of subset-firing moves, namely the largest possible subsets that move chips towards the target vertex without introducing debt (found e.g. using Dhar's burning algorithm \cite{dhar}), we can obtain a tree-cut decomposition of width \(4\); see Figure \ref{figure:counterexample_remains_but_dhars}.  An interesting direction for future work would be to determine whether this strategy can always be used to give a tree-cut decomposition of width at most \(\deg(D)\), thereby proving that \(\scw(G)\leq \gon(G)\) for all graphs \(G\).

We can find large families of graphs for which \(\sn(G)=\scw(G)=\gon(G)\) by turning to product graphs.  

\begin{proposition}\label{prop:scw_product}
The screewidth of the Cartesian product of two graphs \(G\) and \(H\) satisfies the following inequality:
\[\scw(G\square H)\leq \min\{|V(H)|\scw(G),|V(G)|\scw(H)\}.\]
\end{proposition}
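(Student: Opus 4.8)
The plan is to prove the single bound $\scw(G\square H)\leq |V(H)|\,\scw(G)$, and then obtain the bound $\scw(G\square H)\leq |V(G)|\,\scw(H)$ by the symmetry $G\square H\cong H\square G$; taking the minimum gives the claim. To prove the first bound, I would start with an optimal tree-cut decomposition $\mathcal{T}=(T,\mathcal{X})$ of $G$, so $w(\mathcal{T})=\scw(G)$, and ``thicken'' it across $H$: keep the same tree $T$, but replace each bag $X_b\subseteq V(G)$ with the bag $X_b\times V(H)\subseteq V(G)\times V(H)$. Call the resulting pair $\mathcal{T}'=(T,\mathcal{X}')$.

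First I would check that $\mathcal{T}'$ is a genuine tree-cut decomposition of $G\square H$. Since the bags $X_b$ partition $V(G)$, the sets $X_b\times V(H)$ partition $V(G)\times V(H)=V(G\square H)$, so the union and disjointness conditions hold. The key structural observation, which I would isolate as the heart of the argument, is the distinction between the two edge types of $G\square H$: an \emph{$H$-edge} joins $(u,h)$ to $(u,h')$ and so has both endpoints in the single bag $X_b\times V(H)$ (where $u\in X_b$), whereas a \emph{$G$-edge} joins $(u,h)$ to $(u',h)$ with $uu'\in E(G)$, mirroring a single edge of $G$ in the ``$h$-th copy'' of $G$. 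Consequently $H$-edges never contribute to any link or node adhesion of $\mathcal{T}'$, while the $G$-edges crossing a link or node split into $|V(H)|$ identical copies, one per vertex of $H$.

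From this observation the width computation is routine accounting. For a link $l$, deleting it partitions $V(G)$ as $A\sqcup A^C$ and hence $V(G\square H)$ as $(A\times V(H))\sqcup(A^C\times V(H))$; only $G$-edges cross, and for each $h\in V(H)$ the crossing $G$-edges are exactly a copy of $\adh_{\mathcal{T}}(l)$, so $|\adh_{\mathcal{T}'}(l)|=|V(H)|\,|\adh_{\mathcal{T}}(l)|\leq |V(H)|\,\lw(\mathcal{T})$. Similarly for a node $b$, only $G$-edges contribute to $\adh_{\mathcal{T}'}(b)$, giving $|\adh_{\mathcal{T}'}(b)|=|V(H)|\,|\adh_{\mathcal{T}}(b)|$, while $|X_b\times V(H)|=|V(H)|\,|X_b|$; hence $|X_b\times V(H)|+|\adh_{\mathcal{T}'}(b)|=|V(H)|\bigl(|X_b|+|\adh_{\mathcal{T}}(b)|\bigr)\leq |V(H)|\,\bw(\mathcal{T})$. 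Taking the maximum over links and nodes yields $w(\mathcal{T}')\leq |V(H)|\,\max\{\lw(\mathcal{T}),\bw(\mathcal{T})\}=|V(H)|\,\scw(G)$, so $\scw(G\square H)\leq |V(H)|\,\scw(G)$, completing the argument. There is no serious obstacle here; the only point requiring care is the bookkeeping that $H$-edges contribute nothing to adhesions and that each $G$-edge is replicated exactly $|V(H)|$ times, which is precisely what makes every adhesion and bag size scale by the clean factor $|V(H)|$.
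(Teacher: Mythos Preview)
Your proposal is correct and follows essentially the same approach as the paper: both take an optimal tree-cut decomposition $(T,\mathcal{X})$ of $G$, replace each bag $X_b$ by $X_b\times V(H)$, observe that bag sizes and all adhesions scale by exactly $|V(H)|$, and conclude by symmetry. Your write-up is simply more explicit about why $H$-edges contribute nothing to adhesions, which the paper leaves implicit.
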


\begin{proof}
Let \(\mathcal{T}=(T,\mathcal{X})\) be tree-cut decomposition of \(G\) of width \(\scw(G)\). For every bag \(B\subset V(G)\) in  \(\mathcal{X}\), let \(B'\subset V(G\square H)\) be the set \(\{(u,v)\,|\, u\in B\}\), and define \(\mathcal{X}'=\{B'\,|\, B\in \mathcal{X}\}\).  Consider the tree-cut decomposition  \(\mathcal{T}'=(T,\mathcal{X}')\) on \(G\square H\).  Every bag size has increased by a factor of \(|V(H)|\), as has every link adhesion and node adhesion.  It follows that \(w(\mathcal{T}')= |V(H)|w(\mathcal{T})=|V(H)|\scw(G)\), so \(\scw(G\square H)\leq |V(H)|\scw(G)\).  By symmetry, we have
\[\scw(G\square H)\leq \min\{|V(H)|\scw(G),|V(G)|\scw(H)\}.\qedhere\]
\end{proof}

We remark that this upper bound may be a strict inequality.  In Figure \ref{figure:scw_product_sctrict} we illustrate a graph \(G\) on three vertices, the product \(G\square G\), a scramble on \(G\square G\), and a tree-cut decomposition on \(G\square G\).  We remark that \(\scw(G)=2\), that the order of the scramble is \(4\), and that the width of the tree-cut decomposition is \(4\); thus, even though \(|V(G)|\scw(G)=3\cdot 2=6\), we have \(\scw(G\square G)=4<6\).  This same graph was used in \cite[Figure 1]{aidun2019gonality} to prove that the gonality of a product graph could be strictly smaller than a similar upper bound; it turns out the gonality of this graph is equal to~\(5\).

\begin{figure}[hbt]
    \centering
    \includegraphics{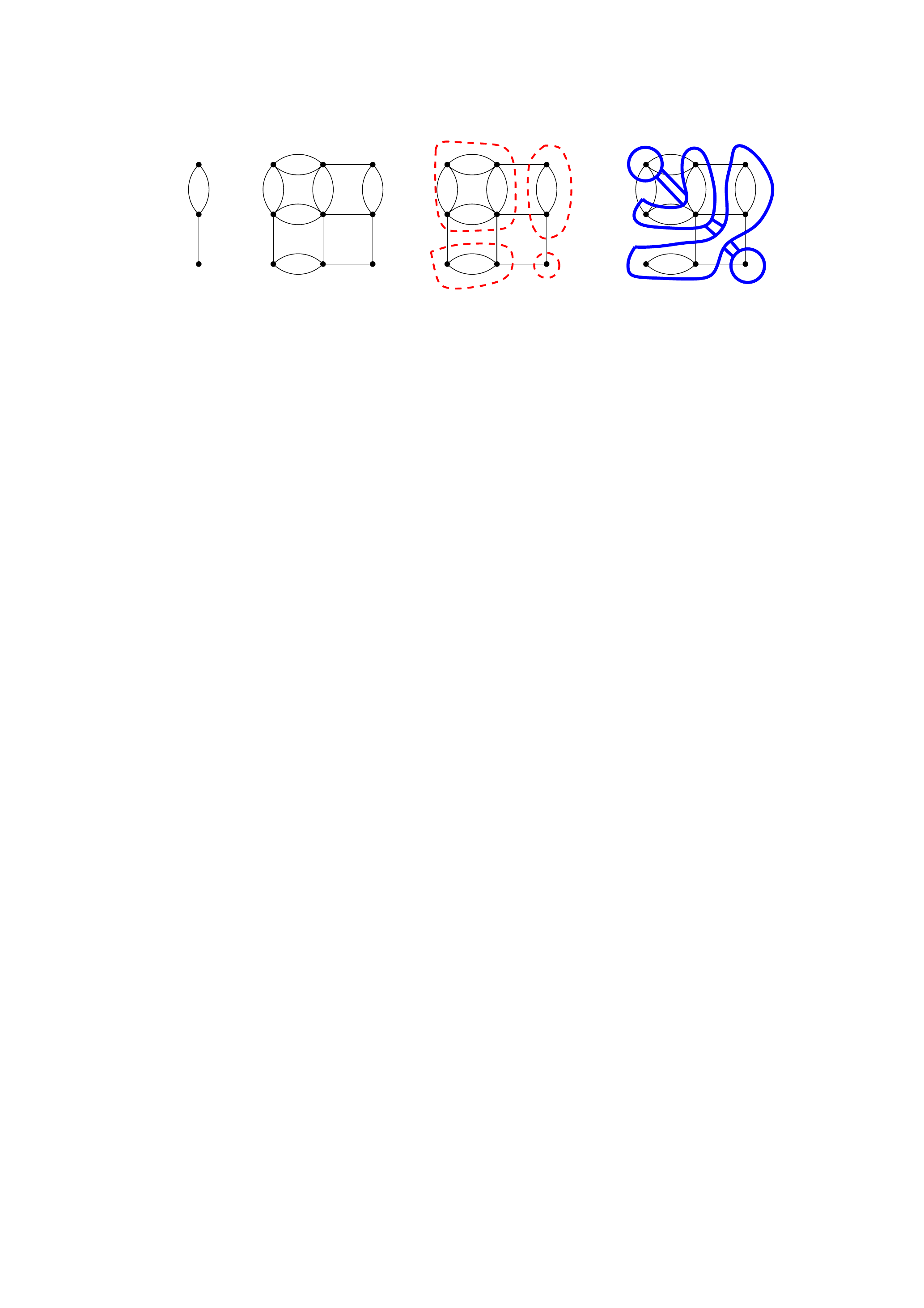}
    \caption{A graph \(G\), the product \(G\square G\), a scramble of order \(4\), and a tree-cut decomposition of width \(4\).}
    \label{figure:scw_product_sctrict}
\end{figure}

There are many families of product graphs for which the upper bound on screewidth equals a lower bound on scramble number, giving us an exact formula for screewidth.

\begin{corollary} We have the following screewidth formulas.  In every case, the scramble number of the graph is given by the same formula; and for cases (ii), (iv), (v), (vii), and (viii), so is the gonality of the graph.

\begin{itemize}
    \item[(i)] \(\scw(T\square H)=\min\{|V(H)|,\scw(H)|V(G)|\}\), where \(T\) is a tree and \(H\) is a a graph with \(\lambda(H)=\scw(H)\).
    \item[(ii)]\(\scw(G\square K_n\square T)=n|V(G)|\), where \(T\) is a tree and \(G\) is any graph with \(|V(G)|\leq |V(T)|\).
    \item[(iii)] \(\scw(G\square H)=2|V(G)|\), where \(\lambda(H)=\scw(H)=2\) and \(|V(G)|\leq |V(H)|/2\).
    \item[(iv)]\(\scw(G\square K_{m,n})=|V(G)|\min\{m,n\}\), where \(|V(G)|\leq (m+n)/m\).
    \item[(v)] \(\scw(C_m\square K_n)=\min\{2n,m(n-1)\}\) for \(m\geq 4\) and \(n\geq 2\).
    \item[(vi)] \(\scw(G\square H)=2\min\{|V(G)|,|V(H)|\}\), where \(\kappa(G)=\kappa(H)=\scw(G)=\scw(H)=2\).
    \item[(vii)] \(\scw(C_\ell\square C_m\square C_n)=2\ell m\), where \(\ell,m\geq 3\) and \(\frac{2}{3}\ell m\leq n\).
    \item[(viii)] \(\scw(K_k\square T\square K_\ell)=k\ell\), where \(T\) is a tree and \(k<\ell\leq k(|V(T)|-2)+4\).
\end{itemize}

\end{corollary}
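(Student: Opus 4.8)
The plan is to prove each formula by sandwiching the screewidth between an upper bound from Proposition~\ref{prop:scw_product} and the scramble number, using $\sn \leq \scw$ from Theorem~\ref{theorem:main}. For each product I would first record the screewidths of the factors from results already established: $\scw = 1$ for a tree and $\scw(K_n) = n-1$ by Proposition~\ref{proposition:tree_and_complete}, and $\scw(C_n) = 2$ and $\scw(K_{n_1,\dots,n_k}) = \sum_i n_i - \max_i n_i$ by Proposition~\ref{prop:scw_families}. Plugging these into $\scw(G\square H) \leq \min\{|V(H)|\scw(G),\, |V(G)|\scw(H)\}$ produces a bound in which one of the two terms is exactly the stated formula; since the right-hand side is a minimum it is at most that term, so $\scw \leq (\text{formula})$ follows immediately, without having to decide which term is smaller.

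For the two-factor cases (i), (iii), (iv), (v), (vi) this is a single application of the product inequality. For the three-factor cases (ii), (vii), (viii) I would group the factors as $(A\square B)\square C$ and apply the inequality once, always choosing the grouping so that the tree or cycle factor---whose screewidth is $1$ or $2$---is multiplied against the full vertex count of the remainder. For instance $G\square K_n\square T = (G\square K_n)\square T$ gives the term $|V(G\square K_n)|\scw(T) = n|V(G)|$; the grouping $(C_\ell\square C_m)\square C_n$ gives $|V(C_\ell\square C_m)|\scw(C_n) = 2\ell m$; and $(K_k\square K_\ell)\square T$ gives $|V(K_k\square K_\ell)|\scw(T) = k\ell$. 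In each case this matches the claimed formula, so the upper bound holds.

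For the lower bound I would invoke the scramble number of each product, which equals the same formula by the cited results (e.g.\ \cite{scramble,echavarria2021scramble,uniform_scrambles}) and is precisely the ``same formula'' referred to in the statement. Theorem~\ref{theorem:main} then gives $(\text{formula}) = \sn \leq \scw \leq (\text{formula})$, forcing equality. For the gonality claims in (ii), (iv), (v), (vii), (viii), I would cite the corresponding gonality computations from the literature; since those gonalities also equal the formula and we will have shown $\scw = \sn = (\text{formula})$, we conclude $\gon = \scw = \sn$ as well.

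The main obstacle is not the screewidth upper bound, which is essentially automatic, but verifying that the numerical hypotheses attached to each case---such as $|V(G)| \leq |V(T)|$ in (ii), $|V(G)| \leq |V(H)|/2$ in (iii), $|V(G)| \leq (m+n)/m$ in (iv), and $k < \ell \leq k(|V(T)|-2)+4$ in (viii)---are exactly what guarantee the scramble number attains the formula rather than falling short. These inequalities ensure the formula is the smaller of the two product terms, so that the $\sn$ lower bound and the $\scw$ upper bound genuinely meet; checking this alignment case by case, and confirming that each cited scramble-number and gonality result applies under the stated hypotheses, is the bulk of the work.
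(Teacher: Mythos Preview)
Your proposal is correct and matches the paper's approach exactly: upper bounds from Proposition~\ref{prop:scw_product}, lower bounds from the scramble-number formulas in \cite[\S 5]{echavarria2021scramble} via Theorem~\ref{theorem:main}, and gonality from \cite[Table~1]{echavarria2021scramble}. You are more explicit than the paper about the factor screewidths needed and the groupings for the three-factor products, but the argument is the same.
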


\begin{proof}
For every claim, the upper bound on screewidth is provided by Proposition \ref{prop:scw_product}.  The lower bounds are provided by matching formulas for scramble number proved in \cite[\S 5]{echavarria2021scramble}, which also gives the relevant formulas for gonality \cite[Table 1]{echavarria2021scramble}.
\end{proof}

We close now with several families of graphs for which screewidth is unknown.
\begin{itemize}
    \item[(a)] Rook's graphs \(R_{m,n}=K_m\square K_n\). It is known that \(\gon(R_{m,n})=\min\{m(n-1),n(m-1)\}\) \cite[Theorem 1.1]{rooks_gonality}, and also that \(R_{m,n}\) has a divisor \(D\) of positive rank achieving gonality and partitioning \(V(R_{m,n})\) (in particular, place a single chip on every vertex except those in one column). Thus we have \(\scw(R_{m,n})\leq \min\{m(n-1),n(m-1)\}\).  When \(\min\{m,n\}\in \{2,3\}\), or when \(m\geq (n-1)(m-2)\) this formula is equal to scramble number \cite[Theorem 1.2 and Lemmas 5.2 and 5.3]{rooks_gonality}, implying that \(\scw(R_{m,n})= \min\{m(n-1),n(m-1)\}\) in these cases.  However, once \(\min\{m,n\}\geq 4\), we have that \(\sn(R_{m,n})<\gon(R_{m,n})\), meaning there is in general uncertainty in the behavior of \(\scw(R_{m,n})\).  We can compute \(\scw(R_{4,4})\) with the tree-cut decomposition in Figure \ref{figure:r44_scw}, in which the edges connecting vertices have been omitted.  The width of this tree-cut decomposition is \(11\), and \(\sn(R_{4,4})=11\) \cite[Example 5.4]{rooks_gonality}, so \(\scw(R_{4,4})=11<12=\gon(R_{4,4})\).
    
    \begin{figure}
        \centering
        \includegraphics{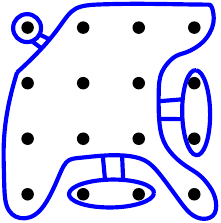}
        \caption{A tree-cut decomposition of width \(11\) on the \(4\times 4\) Rook's graph.}
        \label{figure:r44_scw}
    \end{figure}
    
    \item[(b)] Hypercube graphs \(Q_n\).  Although we showed \(\scw(Q_n)=2^{n-1}\) for \(n\leq 5\), this case is open for all other \(n\).  Indeed, neither scramble number nor gonality are known for \(Q_6\), although it is known that \(\sn(Q_6)<32\) \cite{uniform_scrambles} and it is conjectured that \(\gon(Q_6)=32\) \cite{debruyn2014treewidth}.
\end{itemize}

\appendix

\bibliographystyle{plain}

\end{document}